\newcommand\reallywidecheck[1]{%
\savestack{\tmpbox}{\stretchto{%
  \scaleto{%
    \scalerel*[\widthof{\ensuremath{#1}}]{\kern-.6pt\bigwedge\kern-.6pt}%
    {\rule[-\textheight/2]{1ex}{\textheight}}
  }{\textheight}%
}{0.5ex}}%
\stackon[1pt]{#1}{\scalebox{-1}{\tmpbox}}%
}
\newtheorem{theorem}{Theorem}[section]
\newtheorem{lemma}[theorem]{Lemma}
\newtheorem{proposition}[theorem]{Proposition}
\newtheorem{remark}[theorem]{Remark}
\numberwithin{equation}{section}
\newcommand{\A}{Q_a}
\newcommand{\B}{Q_b}
\newcommand{\C}{Q_c}
\newcommand{\sgn}{\mathrm{sgn}}
\renewcommand{\v}{v}
\newcommand{\q}{q}
\renewcommand{\H}{\mathcal{H}}
\renewcommand{\part}[1]{{\bf Part #1}}
\renewcommand{\Im}{\text{Im}}
\newcommand{\R}{\mathbb{R}}
\newcommand{\f}{\beta}
\newcommand{\ak}[1]{\color{red}{#1} \color{black}}
\def\author@andify{%
  \nxandlist {\unskip ,\penalty-1 \space\ignorespaces}%
    {\unskip {} \@@and~}%
    {\unskip \penalty-2 \space \@@and~}%
}
\begin{document}

\title{Interaction between long internal waves and free surface waves in deep water}

\author{Adilbek Kairzhan}
\author{Christopher Kennedy}
\author{Catherine Sulem}

\address[A. Kairzhan]{Department of Mathematics, Nazarbayev University, 010000, Astana, Kazakhstan.}
\email{akairzhan@nu.edu.kz}
\address[C. Kennedy]{Department of Mathematics and Statistics, Queen's University, K7L 3N6 Kingston Ontario, Canada. }
\email{christopher.kennedy@queensu.ca}
\address[C. Sulem]{Department of Mathematics, University of Toronto, M5S 2E4 Toronto Ontario, Canada.}
\email{sulem@math.utoronto.ca}

\thanks{}
\thanks {}
\date{\today}

\begin{abstract}
We consider a  density-stratified fluid composed of two immiscible layers  separated by a sharp interface. 
We study the regime  of long  internal waves interacting with modulated surface  wave packets and
 describe  their  resonant interaction  by a system of equations 
where the internal wave solves  a high-order Benjamin-Ono  (BO) equation coupled to a linear
Schr\"odinger equation for the envelope of the free surface.
The perturbation methods are based on the Hamiltonian formulation for the original system of irrotational Euler’s equations as described in Benjamin-Bridges \cite{BB97} and Craig-Guyenne-Kalisch \cite{CGK05}.
We also establish a local wellposedness result for a reduced BO-Schr\"odinger system using an  approach developed by  Linares-Ponce-Pilod  \cite{LPP11}.
\end{abstract}
\maketitle

\section{Introduction}
Oceans  are  stratified into layers of differing densities due to temperature or salinity gradients, which induce  internal waves, or waves that propagate along the interface between the layers. Thermoclines, or temperature gradients, commonly occur in tropical seas while pycnoclines, otherwise known as salinity gradients, are typically found in fjords. 
Understanding the formation and propagation of internal  waves  has been the object of intense studies due to their importance in oceanography and
geophysical fluid dynamics. 
Internal waves can affect measurements of currents and undersea navigation as well as offshore construction structures by imposing significant amount of stress. They also have an influence on the mixing of different layers of water in the ocean. 

Due to the technological progress over the past several decades, it has become possible to observe and accurately measure internal waves in the oceans. One of the early measurements have been taken by Perry and Schimke \cite{PS65} in the Andaman Sea, where internal waves of 80 meters high with wavelengths of 2000 meters and a thermocline situated at roughly 500 meters deep in the 1500 meters deep sea were found. Surprisingly, as it was observed in \cite{PS65} and later by Osborne and Burch \cite{OB80}, the presence of internal waves results in the presence of short, choppy, small-amplitude waves or ``rips'' followed by calmness of the sea after its passage, the phenomena known as the ``mill pond" effect.  
The phenomena was already known a long time ago, without any scientific explanations, to Vikings as "dead water", where ships get trapped in the calm water and moved back and forth by underwater forces \cite{E04}. We refer to the work of Helfrich and Melville \cite{HM06} for a more detailed overview of internal waves.

The existence of solitary waves in internal layers of a stratified fluid was investigated in an early paper of Peters and Stoker \cite{PS60}. In this article, they considered a two-layer system of finite depth with a free surface and a free interface, and derived a criterion for the sign of a soliton-like wave at the interface. In \cite{B66}, Benjamin studied an analogous problem with a rigid lid boundary condition on the upper surface using the KdV model. 
The work was later extended by Benjamin \cite{B67} and Ono \cite{O75} to deep water case, who showed that the evolution of long internal waves follows the now well-known, Benjamin-Ono equation. Similar result in the framework of KdV equation was obtained by Benney \cite{Benney66} in shallow water for both the free surface and  rigid lid cases. Later in 1970s, Joseph \cite{J77} and Kubota, Ko and Dobbs \cite{KKD78} derived the Intermediate Long Wave equation to describe the propagation of long internal waves in case of a finite depth. Higher-order corrections to the above models were obtained by Craig, Guyenne and Kalisch in \cite{CGK05} using the framework of Hamiltonian perturbation theory. Their results recovered, in particular, an  extended KdV equation found in Kawahara \cite{K72}. 

The modeling theory for internal waves continued to develop assuming the presence of more physical variables. For example, the effect of a depth-dependent horizontal shear flow on internal waves  was studied by Grimshaw \cite{G81} in shallow and deep water cases. As a result, the KdV and Benjamin-Ono equations were derived with non-constant coefficients depending on a variable identified with a coordinate of a ray along which the wave propagates. Choi in \cite{C06} analyzed the interaction of a linear shear current with large amplitude internal waves. Under the Boussinesq regime of a small density jump between the layers, it was found that the direction of a shear current significantly affects the properties of a soliton-like wave. Khusnutdinova and Zhang in \cite{KZ16} considered, additionally, the effect of non-trivial geometry of waterfronts on the evolution of the surface and internal waves in three-dimensional fluid of finite depth. Assuming the shape of the waves is curvilinear, they derived a cylindrical KdV-type equation for the amplitude of the waves, and developed a theory describing the distortion of the curvilinear waves by the shear flow. 

A characteristic change in the reflectance of the water surface and the observed in \cite{PS65} "ripple effect" have provided empirical evidence of coupling between the surface and internal waves. 
The strength of the interaction depends on the relation between the following properties of the waves: (A) their length scales and (B) their group and phase velocities. The studies of (A) in the literature are mostly subdivided into two possibilities. The first is when the length scales of the surface and internal waves are comparable, e.g. Gear and Grimshaw \cite{GG84} and P\u{a}ra\u{u} and Dias \cite{PD01}. In \cite{GG84} the authors also considered the relation between the phase velocities of the wavefronts and observed the strong interaction in the case where the phase velocities are almost equal, which leads to a coupled KdV-KdV system. In the case where the phase velocities differ by a quantity $\mathcal{O}(1)$,  they observed the weak interaction leading to an uncoupled KdV-KdV system. The second possibility in studies of (A) is when the internal and surface waves have different length scales, which are usually described by a long internal wave interacting with modulated quasi-monochromatic surface waves. In the case of the finite depth, this leads to the coupled system composed of the  KdV equation, describing the evolution of the internal wave, and a Schr\"odinger equation for the evolution of the surface wave, e.g. Kawahara, Sugimoto and Kakutani \cite{KSK75}, Hashizume \cite{H80} and Funakoshi and Oikawa \cite{FO83}. All of these works studied the effect of the resonant interaction of the waves, when the group velocity of the short wave coincides with the phase velocity of the long wave, on the coupling of the system and the form of its solutions. In \cite{FO83} Funakoshi and Oikawa also derived the Benjamin-Ono -- Schr\"odinger system for the case of a deep fluid. Further analytical study of the resonant interaction of internal and surface waves in finite depth, including detailed mathematical description of the ``ripple" and ``mill pond" effects, was provided in Craig, Guyenne and Sulem \cite{CGS12} who showed that the surface signature is generated by a process analogous to radiative absorption.

 The main goal of this paper is to continue the study of the interaction between long internal waves and quasi-monochromatic free surface waves of a density-stratified two-layer fluid in deep water. Using the perturbation Hamiltonian theory, we derive a coupled system describing the evolution of the waves and explicitly provide the non-linear coupling coefficients, which are dependent on the physical parameters of the system. In our derivation, as a part of the reduction process, we naturally come up with the resonant condition when the group velocity of the surface wave coincides with the phase velocity of the longer-scale internal wave. 
Our results extends the previously obtained system of Funakoshi and Oikawa \cite{FO83} by including the contribution of higher-order terms. The latter is made possible by 
assuming that the amplitude $\varepsilon_1$ of the surface waves is smaller than the amplitude $\varepsilon$ of the internal waves and satisfies the relation $\varepsilon^{3/2} < \varepsilon_1 < \varepsilon$.

The two-dimensional fluid domain consists of two immiscible fluids separated by a free interface, which idealizes a sharp thermocline or pycnocline. We follow the approach known since Zakharov \cite{Z68}, in which the water wave equations are expressed as a Hamiltonian system.
The Hamiltonian formulation, for which the Hamiltonian identifies with the total energy  is written in terms of canonically conjugate variables $(\eta,\eta_{1}, \xi, \xi_{1})$ that corresponding to the elevations and traces of quantities related to  the velocity potentials of the interface and surface, respectively. Following the works of Benjamin-Bridges \cite{BB97},  Craig and Sulem \cite{CS93} and Craig, Guyenne and Kalisch \cite{CGK05}, we rewrite the Hamiltonian in terms of the Dirichlet-Neumann operators for the free interface $G(\eta)$ and surface $G_{1}(\eta,\eta_{1})$, which according to Coifman and Meyer \cite{CM85} are analytic near $\eta = \xi = \eta_{1} = \xi_{1} = 0$. This allows to expand the Hamiltonian functional $H$ accordingly. The quadratic part of $H$, denoted $H^{(2)}$, is derived in Section 3, while the cubic part $H^{(3)}$ is derived in Section 4. In Section 5, we introduce the scaling regime, which corresponds to a long internal wave and a near-monochromatic surface wave. Under this regime, we obtain the leading terms of $H^{(2)}$ and $H^{(3)}$. From the reduced Hamiltonian, we derive in Section 6 a higher-order Benjamin-Ono (BO) equation coupled to a linear Schr\"odinger equation describing the time evolution of the internal wave and surface wave envelope, respectively.  
In the last two sections, we establish a local well-posedness result for a reduced coupled BO-Schr\"odinger system. 
The main difficulties are the second-order derivatives in the nonlinear terms of the BO equation, which is taken care of by using a gauge transformation following the approach of Linares, Pilod and Ponce \cite{LPP11}, and the presence of coupling terms.

\section{Formulation of the problem}

\subsection{Euler equations} 
We consider a two-dimensional fluid domain composed of two immiscible fluids separated by a free interface $\{y = \eta(x,t) \}$ into lower and upper regions given by
\begin{equation*}
S(\eta) = \{(x,y): ~ x \in \mathbb{R},~  -\infty < y < \eta(x,t) \}, 
\end{equation*}
with the fluid density $\rho$, and
\begin{equation*}
S_{1}(\eta,\eta_{1}) = \{(x,y): ~ x \in \mathbb{R}, ~ \eta(x,t) < y < h_{1}+\eta_{1}(x,t) \},
\end{equation*}
with the fluid density $\rho_{1}$, respectively. We assume the system is in stable configuration  ($\rho  > \rho_{1}$). In such setting, 
the fluid motion is assumed to be a potential flow  with 
  velocities $\mathbf{u}(x,y,t) = \nabla \phi(x,y,t)$ in $S(t;\eta)$ and $\mathbf{u}_{1}(x,y,t) = \nabla \phi_{1}(x,y,t)$ in $S(t;\eta,\eta_{1})$
 in each fluid region.   
The  two  velocity potentials satisfy
\begin{equation}
\label{laplacian-phi}
\begin{aligned}
&\Delta \phi = 0, ~\, \mathrm{in} ~\, S(t;\eta) \\
&\Delta \phi_{1} = 0, ~\, \mathrm{in}~\, S_{1}(t;\eta,\eta_{1}).
\end{aligned}
\end{equation}
At the deep bottom, we assume the boundary condition $\phi(x,y) \to 0$ as $y \to -\infty$. 

At  the interface between the two fluid domains, we impose two  kinematic boundary conditions and the Bernoulli condition of balance of forces. Denoting $\widehat{\nu}$  the exterior unit normal pointing out of the free interface, the two equations addressing the kinematic conditions on the interface are
\begin{align}
\label{kinematic-condition-1-nu1}
\partial_{t}\eta &= \partial_{y}\phi - (\partial_{x}\eta)(\partial_{x}\phi) = \nabla \phi \cdot \widehat{\nu} \sqrt{1+|\partial_{x}\eta|^{2}}
\end{align}
and
\begin{align}
\partial_{t}\eta &=  \partial_{y}\phi_{1} - (\partial_{x}\eta)(\partial_{x}\phi_{1}) =  - \nabla \phi_{1} \cdot \widehat{\nu} \left (-\sqrt{1+|\partial_{x}\eta|^{2}} \right ).
\end{align}
The balance of forces implies  
\begin{align}
\rho \left (\partial_{t}\phi + \frac{1}{2}|\nabla \phi|^{2} + g \eta \right ) = \rho_{1} \left (\partial_{t}\phi_{1} + \frac{1}{2}|\nabla \phi_{1}|^{2} + g\eta \right ),
\end{align}
where $g$ is the  acceleration due to gravity. 

On the upper free surface $\{y = \eta_{1}(x)+h_{1} \}$, 
the velocity potential, $\phi_{1}$  and the surface elevation function, $\eta_{1}$, satisfy the kinematic condition
\begin{align}
\partial_{t}\eta_{1} = \partial_{y}\phi_{1} - (\partial_{x}\eta_{1})(\partial_{x}\phi_{1}) = 
\nabla \phi_{1} \cdot \widehat{\nu_{1}} \sqrt{1+(\partial_{x} \eta)^{2}},
\end{align}
where $\widehat{\nu_{1}}$  is the  unit exterior normal to the upper  the free interface,  
and the  Bernoulli condition
\begin{align}
\label{bernoulli-phi1}
\partial_{t} \phi_{1} + \frac{1}{2}|\nabla \phi_{1}|^{2} + g \eta_{1} = 0.
\end{align}

\subsection{Hamiltonian formulation} 
We introduce the canonical variables which allow to rewrite the equations of motion of the free interface and the free surface as the Hamiltonian system,  following Benjamin and Bridges \cite{BB97} and Craig, Guyenne and Kalisch  \cite{CGK05}.  
These involve the free boundaries $\eta$ and $\eta_1$, and their  canonically conjugated variables expressed in terms of the traces of the velocity potentials on the boundaries
\begin{equation*}
\begin{aligned}
&\xi (x,t) := \rho \phi (x, \eta(x,t), t) - \rho_1 \phi_1 (x, \eta(x,t), t),\\
&\xi_1 (x,t) := \rho_1 \phi_1 (x, h_1 + \eta_1(x,t), t).
\end{aligned}
\end{equation*}
Equations \eqref{laplacian-phi}-\eqref{bernoulli-phi1} 
take the following canonical form in terms of the variables $(\eta, \xi, \eta_1, \xi_1)$,
\begin{align}
\label{Hamiltonian-system-1}
\partial_{t} \begin{pmatrix} \eta \\ \xi \\ \eta_{1} \\ \xi_{1} \end{pmatrix} 
= \begin{pmatrix} 0 & 1 & 0 & 0 \\ -1 & 0 & 0 & 0 \\ 0 & 0 & 0 & 1 \\ 0 & 0 & -1 & 0 \end{pmatrix} \begin{pmatrix} \delta_{\eta}H \\ \delta_{\xi}H \\ \delta_{\eta_{1}}H \\ \delta_{\xi_{1}}H \end{pmatrix},
\end{align}
where the Hamiltonian $H$ is the sum of the kinetic energy $K$ and the potential energy $V$, 
\begin{equation}
\label{HamiltonianSum}
H = K+V.
\end{equation}
Here, the kinetic energy is the weighted sum of the gradients of the two potentials of the velocity flows
\begin{align}
\label{kinetic-energy}
K &= \frac{1}{2} \int_{\R} \int_{ -\infty}^{\eta(x)} \rho |\nabla \phi(x,y)|^{2}\, dy\, dx 
+ \frac{1}{2} \int_{\R} \int_{\eta(x)}^{h_{1}+\eta_{1}(x)} \rho_{1}|\nabla \phi_{1}(x,y)|^{2}\, dy\, dx,
\end{align}
while the potential energy is given by
\begin{align} \label{potential}
V = \frac{1}{2} \int_{\mathbb{R}} g(\rho-\rho_{1})\eta^{2}(x)\, dx + \frac{1}{2} \int_{\mathbb{R}} g \rho_{1} \left ((h_{1}+\eta_{1})^{2}(x) - h_{1}^{2} \right )\, dx.
\end{align} 
The flow of the Hamiltonian system \eqref{Hamiltonian-system-1} preserves the momentum, $I$, given by 
\begin{align}
\label{momentum}
I &:= \int_{\R} \left (\rho \int_{-\infty}^{\eta} \partial_{x}\phi \, dy + \rho_{1} \int_{\eta}^{h_{1}+\eta_{1}} \partial_{x}\phi_{1} \, dy \right )\, dx.
\end{align}
Indeed, one can verify that $I$ Poisson commutes with $H$.  

\subsection{ Dirichlet-Neumann operators}

We further rewrite the Dirichlet integrals of the kinetic energy \eqref{kinetic-energy} using the Dirichlet-Neumann operators (DNOs) for the two-fluid domains. The DNO for the lower fluid domain is defined as
\begin{align}
\label{dno-g}
G(\eta)\phi(x, \eta(x,t), t) = ((\nabla \phi)\cdot \widehat{\nu})(x,\eta(x)) \sqrt{1+|\partial_{x}\eta|^{2}},
\end{align}
where $\widehat{\nu}$ is the exterior unit normal used in \eqref{kinematic-condition-1-nu1}. For the upper fluid, the DNO is in matrix form due to the contribution from the traces of the velocity potential $\phi_1$ on the boundaries $\eta$ and $\eta_1$, and is defined as 
\begin{align}
\label{dno-gij}
\begin{pmatrix} G_{11}(\eta,\eta_{1}) & G_{12}(\eta,\eta_{1}) \\
G_{21}(\eta,\eta_{1}) & G_{22}(\eta,\eta_{1}) \end{pmatrix}
\begin{pmatrix} \phi_{1}(x, \eta(x,t)) \\ \phi_{1}(x, h_1+\eta_1(x,t)) \end{pmatrix}
&= \begin{pmatrix} -  \sqrt{1+|\partial_{x}\eta|^{2}}(\nabla \phi_{1} \cdot \hat{\nu})|_{y=\eta}  \\   \sqrt{1+|\partial_{x}\eta_{1}|^{2}}(\nabla \phi_{1} \cdot \hat{\nu_{1}})|_{ h_{1}+\eta_{1}} \end{pmatrix}.
\end{align}
The Hamiltonian for the free interface and free surface problem in \eqref{HamiltonianSum} becomes
\begin{equation}
\label{Hamiltonian-DNO-form}
\begin{aligned}
H &= \frac{1}{2} \int_{\mathbb{R}} \begin{pmatrix} \xi & \xi_{1} \end{pmatrix} \begin{pmatrix} G_{11}B^{-1}G & -GB^{-1}G_{12} \\ -G_{21}B^{-1}G & \rho^{-1} G_{22} - \rho \rho_{1}^{-1}G_{21}B^{-1}G_{12} \end{pmatrix} \begin{pmatrix} \xi \\ \xi_{1} \end{pmatrix}\, dx\\
&\quad + \frac{1}{2} \int_{\mathbb{R}} g(\rho-\rho_{1})\eta^{2}\, dx + \frac{1}{2} \int_{\mathbb{R}} g \rho_{1} \left ((h_{1}+\eta_{1})^{2} - h_{1}^{2} \right )\, dx,
\end{aligned}
\end{equation}
where 
\begin{equation}
\label{B-definition}
B(\eta, \eta_1) = \rho G_{11} (\eta, \eta_1) + \rho_1 G (\eta).    
\end{equation}


We recall  the expressions for the Taylor expansions of DNO operators in \eqref{dno-g}-\eqref{dno-gij} in powers
of the elevations $\eta$ and $\eta_1$. From Craig and Sulem \cite{CS93}, Craig, Guyenne and Kalisch \cite{CGK05} and Craig, Guyenne and Sulem \cite{CGSNatHazards11}, it is known that one can write $G(\eta)$ and $G_{ij}(\eta,\eta_{1})$ for the lower and upper fluid domains $S(\eta)$ and $S_{1}(\eta; \eta_{1})$, respectively, as 
\begin{equation}
\label{DNO-expansion}
\begin{cases}
G(\eta) = G^{(0)}(\eta) + G^{(1)}(\eta) + \mathcal{O}(\eta^{2}),\\
G_{ij}(\eta,\eta_{1}) = G_{ij}^{(0)}(\eta,\eta_{1}) + G_{ij}^{(1,0)}(\eta,\eta_{1}) + G_{ij}^{(0,1)}(\eta,\eta_{1}) + \mathcal{O}(|(\eta,\eta_{1})|^{2}), ~   i,j \in \{0,1\},
\end{cases}
\end{equation}
where the  $G^{(m)}(\eta)$ are homogeneous of degree $m$ in $\eta$, and  the $G^{(m_0,m_1)}_{ij}$ are homogeneous of degrees $m_0$ in $\eta$ and $m_1$ in $\eta_1$. 
The series in \eqref{DNO-expansion} are known to converge for sufficiently small $\eta$ and $\eta_1$ \cite{FN08}, and  their homogeneous terms can be found explicitly according to recursive relations given in \cite{CS93} and \cite{CGSNatHazards11}. 
Denoting $D = -i \partial_x$, we have
\begin{equation}
\label{DNO-1-definition}
\begin{cases}
G^{(0)}(\eta) = |D|, \\
G^{(1)}(\eta) = D \eta D - |D| \eta |D|,
\end{cases}
\end{equation}
where $|D|$ is a pseudo-differential operator acting as a Fourier multiplier $|k|$. Alternatively, $|D|$ can be expressed using the Hilbert transform $\H$ defined by
\begin{equation}
\label{hilbert-transform-defn}
\H h(x) := \text{p.v.} \frac{1}{\pi} \int\limits_{\mathbb{R}} \frac{h(y)}{x-y} dy.
\end{equation}
It is known that $\H = -i~ \sgn(D_x)$ and 
the relation $|D| = \partial_x \H$ holds.

For $G_{ij}$ we have 
\begin{equation}
\begin{pmatrix} G_{11}^{(0)} & G_{12}^{(0)} \\
G_{21}^{(0)} & G_{22}^{(0)} \end{pmatrix} =
\begin{pmatrix} 
D\coth(h_{1}D) & -D\mathrm{csch}(h_{1}D) \\
-D\mathrm{csch}(h_{1}D) & D\coth(h_{1}D) \end{pmatrix},
\end{equation}

\begin{equation}
\label{DNO-10-definition}
\begin{pmatrix} G_{11}^{(10)} & G_{12}^{(10)} \\
G_{21}^{(10)} & G_{22}^{(10)} \end{pmatrix} =
\begin{pmatrix} D\coth(h_{1}D)\eta D\coth(h_{1}D)-D\eta D & -D\coth(h_{1}D)\eta D~\mathrm{csch}(h_{1}D) \\
- D~\mathrm{csch}(h_{1}D)\eta D\coth(h_{1}D) & D~\mathrm{csch}(h_{1}D)\eta~ \mathrm{csch}(h_{1}D) \end{pmatrix},
\end{equation}

\begin{equation}
\label{DNO-01-definition}
\begin{pmatrix} G_{11}^{(01)} & G_{12}^{(01)} \\
G_{21}^{(01)} & G_{22}^{(01)} \end{pmatrix} = 
\begin{pmatrix} -D~\mathrm{csch}(h_{1}D)\eta_{1} D~\mathrm{csch}(h_{1}D) & D~\mathrm{csch}(h_{1}D)\eta_{1} D\coth(h_{1}D) \\
D\coth(h_{1}D)\eta_{1} D~\mathrm{csch}(h_{1}D) & -D\coth(h_{1}D)\eta_{1} D\coth(h_{1}D) + D\eta_{1}D \end{pmatrix}.\qquad \qquad
\end{equation}
From the explicit formulas above, we also find the expansion of the operator $B$ defined in \eqref{B-definition} and 
appearing in \eqref{Hamiltonian-DNO-form}:
\begin{equation}
\label{B-expansion}
\begin{aligned}
B & = \left(\rho G_{11}^{(0)} + \rho_1 G^{(0)}\right)
+ 
\left(\rho G_{11}^{(10)} + \rho G_{11}^{(01)} + \rho_{1} G^{(1)} \right) 
+ \mathcal{O}(|(\eta,\eta_{1})|^{2}) \\
& = : B_0 + B^{(1)} + \mathcal{O}(|(\eta,\eta_{1})|^{2}). 
\end{aligned}
\end{equation}

\section{Linear analysis}

We derive the linearized equations for the Hamiltonian system \eqref{Hamiltonian-system-1}. To do so, we extract the quadratic part, $H^{(2)}$, of the Hamiltonian $H$ in \eqref{Hamiltonian-DNO-form}.
We also introduce a canonical transformation which allows to diagonalize $H^{(2)}$.

\subsection{Quadratic Hamiltonian}


To extract $H^{(2)}$ from $H$ in canonical variables, we use the leading terms in the Taylor expansions of the operators given in \eqref{DNO-expansion} and \eqref{B-expansion}. 
We obtain
\begin{equation}
\label{H2-expression-1}
\begin{aligned}
H^{(2)} = &\frac{1}{2} \int_{\R} \bigg [\xi \frac{|D|\coth(h_{1}D)}{\rho \coth(h_{1}D) + \rho_{1}\mathrm{sgn}(D)} \xi - 2\xi \frac{|D|\mathrm{csch}(h_{1}D)}{\rho \coth(h_{1}D) + \rho_{1}\mathrm{sgn}(D)}\xi_{1} \\
&\qquad + \xi_{1} \frac{|D|\coth(h_{1}D) + \frac{\rho}{\rho_{1}}D}{\rho \coth(h_{1}D) + \rho_{1}\mathrm{sgn}(D)}\xi_{1} + g(\rho-\rho_{1})\eta^{2}\, dx + g\rho_{1}\eta_{1}^{2} \bigg ] \, dx.
\end{aligned}    
\end{equation}
Then, from \eqref{Hamiltonian-system-1}, the linearized equations of motion can be written as follows:
\begin{align*}
\partial_{t} \begin{pmatrix} \eta \\ \xi \\ \eta_{1} \\ \xi_{1} \end{pmatrix} 
= \begin{pmatrix} 0 & \frac{|D|\coth(h_{1}D)}{\rho \coth(h_{1}D) + \rho_{1}\mathrm{sgn}(D)} & 0 & -\frac{|D|\mathrm{csch}(h_{1}D)}{\rho \coth(h_{1}D) + \rho_{1}\mathrm{sgn}(D)} \\ -g(\rho - \rho_1) & 0 & 0 & 0 \\ 0 & -\frac{|D|\mathrm{csch}(h_{1}D)}{\rho \coth(h_{1}D) + \rho_{1}\mathrm{sgn}(D)} & 0 & \frac{|D|\coth(h_{1}D) + \frac{\rho}{\rho_{1}}D}{\rho \coth(h_{1}D) + \rho_{1}\mathrm{sgn}(D)} \\ 0 & 0 & -g\rho & 0 \end{pmatrix} 
\begin{pmatrix} \eta \\ \xi \\ \eta_{1} \\ \xi_{1} \end{pmatrix}
\end{align*}
The dispersion relation for this system  is 
\begin{align} \label{dispersion-relation-equation}
\omega^{4} - g\rho |k| \frac{1+\coth(h_{1}|k|)}{\rho \coth(h_{1}|k|) + \rho_{1}} \omega^{2} + g^{2}(\rho-\rho_{1})k^{2} \frac{1}{\rho \coth(h_{1}|k|) + \rho_{1}} = 0
\end{align}
with roots
\begin{equation}
\label{dispersion-relations}
\omega^{2}(k) = \frac{g (\rho-\rho_1) |k|}{\rho \coth(h_{1}|k|) +\rho_1 } \, ; \quad \omega_{1}^{2}(k) = g|k|,
\end{equation}
where $\omega^{2}(k)$ is associated with 
the interface, while  $\omega_{1}^{2}(D)$ with 
the surface wave. Note that $\omega^2(k)$ is the limiting value, as $h\to \infty$, of 
\begin{equation}
\label{dispersion-relation-finite-depth}
\begin{aligned}
&\omega^2(k) =  \frac{1}{2}g\rho k \frac{1+\tanh(hk) \coth (h_1k)}{\rho \coth(h_1k) + \rho_1 \tanh(hk)} \\
&- \frac{1}{2} gk \frac{\rho^2(1- \tanh(hk) \coth(h_1k))^2 + 4\rho \rho_1 \tanh(hk) (\coth(h_1k)-\tanh(hk)) + 4\rho_1^2 \tanh(hk)^2}{\rho \coth(h_1k) + \rho_1 \tanh(hk)}, \qquad
\end{aligned} 
\end{equation}
which corresponds to the dispersion relation in finite depth.

\subsection{Normal mode decomposition}
As in \cite{CGS12}, the surface and interface waves are coupled at first order in the Hamiltonian. To decouple the waves, we perform a normal mode decomposition using the canonical transformation
\begin{equation}
\label{canonical-transformation-1}
\begin{pmatrix}
\mu \\ \zeta\\ \mu_1 \\ \zeta_1
\end{pmatrix} = \begin{pmatrix}
a^- \sqrt{g (\rho - \rho_1)} & 0 & b^- \sqrt{g\rho_1} & 0 \\
0 & \frac{a^-}{\sqrt{g (\rho - \rho_1)}} & 0 & \frac{b^-}{\sqrt{g\rho_1}}\\
a^+ \sqrt{g (\rho - \rho_1)} & 0 & b^+ \sqrt{g\rho_1} & 0 \\
0 & \frac{a^+}{\sqrt{g (\rho - \rho_1)}} & 0 & \frac{b^+}{\sqrt{g\rho_1}}
\end{pmatrix} \begin{pmatrix}
\eta \\ \xi \\ \eta_1 \\ \xi_1
\end{pmatrix},
\end{equation}
where $a^\pm, b^\pm $ are the  Fourier multipliers
\begin{equation}
\label{a-b-pm-definition}
\begin{aligned}
& a^{\pm}(D) = \big(2 + \frac{\theta^2}{2} \pm \frac{\theta}{2}\sqrt{4+\theta^2} \big)^{-1/2},\; 
 b^{\pm} (D) = \frac{a^{\pm}(D)}{2}~(\theta \pm \sqrt{4+\theta^2}),\nonumber \\
&\theta (D) = \frac{\C(D)- \A(D)}{\B(D)}
\end{aligned}
\end{equation}
and the coefficients appearing in \eqref{a-b-pm-definition} are 
\begin{equation}
\label{A-B-C-coeff}
\begin{aligned}
& \A = g (\rho-\rho_1) G^{(0)} B_0^{-1} G_{11}^{(0)} \\
& \B  = - g \sqrt{\rho_1 (\rho-\rho_1)} G^{(0)} B_0^{-1} G_{12}^{(0)} \\
& \C = g G^{(0)} B_0^{-1} ( \rho_1 G_{11}^{(0)} + \rho G^{(0)} ).
\end{aligned}
\end{equation} 
After this transformation, the quadratic part of the Hamiltonian \eqref{H2-expression-1} simplifies to
\begin{equation}
\label{H2-expression-2}
\begin{aligned}
H^{(2)} &= \frac{1}{2} \int_{\mathbb{R}} \big [\zeta \omega^{2}(D)\zeta + \mu^{2} + \zeta_{1}\omega_{1}^{2}(D)\zeta_{1} + \mu_{1}^{2} \big ]\, dx,
\end{aligned}
\end{equation}
where $\omega^2(D)$ and $\omega_1^2 (D)$ are the eigenvalues of the symmetric matrix $ \begin{pmatrix} \A(D) & \B(D) \\ \B(D) & \C(D) \end{pmatrix}$ and have Fourier representations given by the dispersion relations \eqref{dispersion-relations}.
\begin{equation}
\label{symplectic-form-mu-mu_1}
\begin{aligned}
\partial_{t} \begin{pmatrix} \mu \\ \zeta \\ \mu_{1} \\ \zeta_{1} \end{pmatrix}
&= \begin{pmatrix} 0 & 1 & 0 & 0 \\
-1 & 0 & 0 & 0 \\
0 & 0 & 0 & 1 \\
0 & 0 & -1 & 0
\end{pmatrix}
\begin{pmatrix} \delta_{\mu}H \\ \delta_{\zeta}H \\
\delta_{\mu_{1}}H \\ \delta_{\zeta_{1}}H \end{pmatrix}.
\end{aligned}
\end{equation}

\section{Cubic terms in the Hamiltonian} 
We now turn to the cubic terms $H^{(3)}$ in $H$. 
The potential energy $V$ in \eqref{potential} is quadratic and does not contribute any higher-order terms. Thus, $H^{(3)}$ reduces to the cubic part $K^{(3)}$ of the kinetic energy $K$, which identifies with the first line of \eqref{Hamiltonian-DNO-form}. 
We calculate $K^{(3)}$ in the original variables $(\eta,\xi, \eta_1, \xi_1)$.

\subsection{Cubic terms of the kinetic energy in the original variables}

We rewrite the kinetic energy as $K = \mathrm{I} - \mathrm{II} + \mathrm{III}$,
where
\begin{align}
\label{kinetic-energy-terms}
\begin{cases}
\mathrm{I} = \frac{1}{2} \int_{\mathbb{R}} \xi G_{11}B^{-1}G\xi \, dx \\ 
\mathrm{II} = \int_{\mathbb{R}} \xi G B^{-1}G_{12}\xi_{1} \, dx \\ 
\mathrm{III} = \frac{1}{2} \int_{\mathbb{R}} \xi_{1} \left (\rho_{1}^{-1}G_{22} - \rho \rho_{1}^{-1}G_{21}B^{-1}G_{12} \right )\xi_{1} \, dx,
\end{cases}
\end{align}
and  expand the operators appearing in $\mathrm{I},  \mathrm{II},  \mathrm{III} $ at first order in $\eta$ or $\eta_1$ using
\eqref{DNO-expansion}, \eqref{DNO-10-definition}, \eqref{DNO-01-definition} and \eqref{B-expansion} as well as identities $G_{11}^{(0)} = G_{22}^{(0)}$, $G_{12}^{(0)} = G_{21}^{(0)}$ and $(G_{11}^{(0)})^{2} - (G_{12}^{(0)})^{2} = (G^{(0)})^{2}$.
\begin{proposition} \label{proposition-all-terms}
The cubic order terms $\mathrm{I}^{(3)}, \mathrm{II}^{(3)}, \mathrm{III}^{(3)}$ in  $\mathrm{I},\mathrm{II}, \mathrm{III}$ of \eqref{kinetic-energy-terms} respectively, are given by:

\begin{equation}
\label{I-3-definition}
\begin{aligned}
\mathrm{I}^{(3)}
&= \frac{1}{2}\int_{\mathbb{R}}         \Big[ -\rho \eta ~(DB_0^{-1} G_{11}^{(0)}\xi)^2
-(\rho-\rho_1)\eta ~(G^{(0)}B_0^{-1} G_{11}^{(0)}\xi)^2  \\
& \quad + \rho_1 \eta  ~(DB_0^{-1} G^{(0)}\xi)^2
 - \rho_1 \eta_1 ~(G^{(0)}B_0^{-1}G_{12}^{(0)}\xi)^2\Big]\, dx;
\end{aligned}
\end{equation}
\medskip
\begin{equation}
\begin{aligned}
&\mathrm{II}^{(3)} = \int_{\mathbb{R}} \Big[        - \rho\eta (DB_0^{-1} G_{11}^{(0)} \xi) 
(DB_0^{-1} G_{12}^{(0)} \xi_1) - (\rho-\rho_1) \eta 
(G^{(0)}B_0^{-1} G_{11}^{(0)} \xi) 
( G^{(0)}B_0^{-1} G_{12}^{(0)} \xi_1) \qquad\\
& -\rho\eta (D B_0^{-1} G^{(0)} \xi) (DB_0^{-1} G_{12}^{(0)} \xi_1)    
- \eta_1 (G_{12}^{(0)} B_0^{-1} G^{(0)} \xi)
\left( G^{(0)} B_0^{-1} (\rho_1 G_{11}^{(0)} + \rho G^{(0)}) \xi_1 \right)\Big]\, dx;\qquad
\end{aligned}    
\end{equation}
\medskip
\begin{equation}
\begin{aligned}
&\mathrm{III}^{(3)} =  \frac{1}{2}\int_{\mathbb{R}} \Big[       - (\rho-\rho_1) \eta 
(G^{(0)}B_0^{-1}G_{12}^{(0)} \xi_1)^2
+ \frac{\rho}{\rho_1}(\rho-\rho_1)\eta
(DB_0^{-1}G_{12}^{(0)} \xi_1)^2
\\
&\quad -\frac{1}{\rho_1} \eta_1 \left( G^{(0)} B_0^{-1} (\rho_1 G_{11}^{(0)} + \rho G^{(0)}) \xi_1 \right) 
-\frac{1}{\rho_1}\eta_1 (D\xi_1)^2 \Big]\, dx.
\end{aligned}
\end{equation}
\end{proposition}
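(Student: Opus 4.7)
The strategy is to substitute the Taylor expansions \eqref{DNO-expansion} of $G$ and $G_{ij}$, together with the induced expansion
\begin{equation*}
B^{-1} = B_0^{-1} - B_0^{-1} B^{(1)} B_0^{-1} + \mathcal{O}(|(\eta,\eta_1)|^2)
\end{equation*}
derived from \eqref{B-expansion}, into each of the integrals $\mathrm{I}, \mathrm{II}, \mathrm{III}$ of \eqref{kinetic-energy-terms}. Since $\xi$ and $\xi_1$ each count as one power in the canonical scaling, the cubic contribution is extracted by keeping exactly one factor of linear order in $(\eta, \eta_1)$ among the operators sandwiched between the conjugate variables, with all remaining operator factors frozen at zeroth order $G^{(0)}, G_{ij}^{(0)}, B_0$.

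For each of $\mathrm{I}, \mathrm{II}, \mathrm{III}$ this produces three groups of contributions: (a) the linear factor sits on the left of the sandwich, for instance $G_{11}^{(10)}(\eta) + G_{11}^{(01)}(\eta_1)$ in $\mathrm{I}$; (b) the linear factor sits on the right, for instance $G^{(1)}(\eta)$ in $\mathrm{I}$, or $G_{12}^{(10)}(\eta) + G_{12}^{(01)}(\eta_1)$ in $\mathrm{II}$; and (c) the central correction $-B_0^{-1} B^{(1)} B_0^{-1}$ with $B^{(1)} = \rho G_{11}^{(10)} + \rho G_{11}^{(01)} + \rho_1 G^{(1)}$. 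For $\mathrm{III}$ one additionally splits the diagonal piece $\rho_1^{-1}(G_{22}^{(10)} + G_{22}^{(01)})$ from the sandwich $-\rho \rho_1^{-1} G_{21} B^{-1} G_{12}$.

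Next I would substitute the explicit formulas \eqref{DNO-1-definition}, \eqref{DNO-10-definition}, \eqref{DNO-01-definition} for the linear-order operators, so that every resulting summand becomes a trilinear integral whose integrand is of the form ($\eta$ or $\eta_1$) times a product of two expressions linear in $\xi$ or $\xi_1$. The key structural fact is that $D, G^{(0)}, G_{ij}^{(0)}, B_0$ are all Fourier multipliers with real even symbols, hence self-adjoint and mutually commuting; this allows one to move operators past each other freely under the integral and to peel operators off the outer $\xi, \xi_1$. Combined with the symmetries $G_{11}^{(0)} = G_{22}^{(0)}$, $G_{12}^{(0)} = G_{21}^{(0)}$ and the identity $(G_{11}^{(0)})^2 - (G_{12}^{(0)})^2 = (G^{(0)})^2$, the sum of contributions collapses into the compact squared-form integrals stated in the proposition.

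The main obstacle is bookkeeping and recognising cancellations. Each of $\mathrm{I}^{(3)}, \mathrm{II}^{(3)}, \mathrm{III}^{(3)}$ starts as a sum of roughly a dozen cubic monomials built from $\{\eta,\eta_1\}$, operators in $\{D, G^{(0)}, G_{11}^{(0)}, G_{12}^{(0)}, B_0^{-1}\}$, and the canonical variables; these consolidate into the stated four-term expressions only after using $B_0 = \rho G_{11}^{(0)} + \rho_1 G^{(0)}$ to reduce composite Fourier multipliers such as $G_{11}^{(0)} B_0^{-1}$ and $G^{(0)} B_0^{-1}$. I expect $\mathrm{III}^{(3)}$ to be the most delicate case, since $G_{22}$ contributes both through the bare term $\rho_1^{-1} G_{22}$ and through the sandwich $-\rho\rho_1^{-1} G_{21} B^{-1} G_{12}$, and one must verify that all contributions combine precisely into the four asserted summands rather than producing extra residual terms.
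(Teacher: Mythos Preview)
Your proposal is correct and follows essentially the same approach as the paper's proof in Appendix~\ref{appendix-proof}: expand $G_{11}B^{-1}G$, $GB^{-1}G_{12}$, and $\rho_1^{-1}G_{22} - \rho\rho_1^{-1}G_{21}B^{-1}G_{12}$ to first order in $(\eta,\eta_1)$ via $B^{-1} = B_0^{-1} - B_0^{-1}B^{(1)}B_0^{-1} + \ldots$, insert the explicit formulas \eqref{DNO-1-definition}--\eqref{DNO-01-definition}, and regroup using $B_0 = \rho G_{11}^{(0)} + \rho_1 G^{(0)}$ and the identity $(G_{11}^{(0)})^2 - (G_{12}^{(0)})^2 = (G^{(0)})^2$. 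The only minor discrepancy is your expectation that $\mathrm{III}^{(3)}$ is the most delicate; in fact the paper works out $\mathrm{II}^{(3)}$ in the greatest detail, as that is where the regrouping via $B_0$ and the relation \eqref{DNO-quadratic-relation} are most intricate.
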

Regrouping the contributions obtained in the above proposition, we obtain:
\begin{proposition} \label{cubicH-summary}
The cubic terms $H^{(3)}$ of the Hamiltonian $H$ can be written as 
\begin{equation}
\begin{aligned} \label{cubic-Hamiltonian-1}
&H^{(3)} 
= \frac{1}{2} \int_{\mathbb{R}} \bigg [-(\rho-\rho_{1})\eta \left (G^{(0)}B_{0}^{-1}(G_{11}^{(0)}\xi - G_{12}^{(0)}\xi_{1}) \right )^{2} \\
&
 -\rho_{1}\eta_{1} \left (G_{12}^{(0)} B_{0}^{-1} G^{(0)}\xi - \frac{1}{\rho_{1}} G^{(0)} B_0^{-1} (\rho_1 G_{11}^{(0)} + \rho G^{(0)}) \xi_1
\right )^{2}  \\
&
 -\rho \eta \left (DB_{0}^{-1}(G_{11}^{(0)}\xi - G_{12}^{(0)}\xi_{1}) \right )^{2}  
 + \rho_{1} \eta \left (DB_{0}^{-1}G^{(0)}\xi + \frac{\rho}{\rho_{1}} D B_{0}^{-1} G_{12}^{(0)}\xi_{1} \right )^{2} - \frac{1}{\rho_{1}}\eta_{1}(D\xi_{1})^{2} \bigg ] dx.\qquad
\end{aligned}
\end{equation}
\end{proposition}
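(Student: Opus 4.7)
The proof is an algebraic regrouping of the contributions obtained in Proposition \ref{proposition-all-terms}, and the plan is as follows. Since the potential energy $V$ in \eqref{potential} is quadratic in $(\eta, \eta_1)$, it contributes nothing at cubic order, so $H^{(3)}$ reduces to $K^{(3)} = \mathrm{I}^{(3)} - \mathrm{II}^{(3)} + \mathrm{III}^{(3)}$. Substituting the three formulas from Proposition \ref{proposition-all-terms} produces a sum of terms which are each of the form (prefactor $\eta$ or $\eta_1$) times a quadratic expression in the velocity-potential quantities $G_{11}^{(0)}\xi$, $G_{12}^{(0)}\xi_1$, $G^{(0)}\xi$, $D\xi_1$, acted on by an outer operator $G^{(0)}B_0^{-1}$, $DB_0^{-1}$, or $D$.

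The second step is to organize these terms by prefactor and by outer operator and to recognize each resulting group as a perfect square. For the $\eta$-prefactored terms with outer operator $G^{(0)}B_0^{-1}$, the contributions from $\mathrm{I}^{(3)}$ and $\mathrm{III}^{(3)}$ supply the squared terms $(G^{(0)}B_0^{-1}G_{11}^{(0)}\xi)^{2}$ and $(G^{(0)}B_0^{-1}G_{12}^{(0)}\xi_{1})^{2}$, while $-\mathrm{II}^{(3)}$ supplies exactly the cross term needed to close up $(G^{(0)}B_0^{-1}(G_{11}^{(0)}\xi - G_{12}^{(0)}\xi_{1}))^{2}$ with prefactor $-\frac{1}{2}(\rho-\rho_1)\eta$. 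The $\eta$-prefactored terms with outer operator $DB_0^{-1}$ split into two separate squares, namely $(DB_0^{-1}(G_{11}^{(0)}\xi - G_{12}^{(0)}\xi_{1}))^{2}$ with prefactor $-\tfrac{1}{2}\rho\eta$ and $(DB_0^{-1}G^{(0)}\xi + \tfrac{\rho}{\rho_1} DB_0^{-1}G_{12}^{(0)}\xi_{1})^{2}$ with prefactor $\tfrac{1}{2}\rho_1 \eta$. For the $\eta_1$-prefactored terms, the cross contribution in $\mathrm{II}^{(3)}$ combines with the pure squared terms $(G^{(0)}B_0^{-1}G_{12}^{(0)}\xi)^{2}$ from $\mathrm{I}^{(3)}$ and the $\xi_1$-square in $\mathrm{III}^{(3)}$ to give a single square built from the expression $G_{12}^{(0)}B_0^{-1}G^{(0)}\xi - \tfrac{1}{\rho_1}G^{(0)}B_0^{-1}(\rho_1 G_{11}^{(0)} + \rho G^{(0)})\xi_1$. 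The remaining $-\tfrac{1}{\rho_1}\eta_1(D\xi_1)^{2}$ is already a square and stands alone.

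The third step is the verification, which is a term-by-term expansion of the five squares in \eqref{cubic-Hamiltonian-1} and comparison with the expansions given in Proposition \ref{proposition-all-terms}. The most delicate coefficient to track is the one in front of $\eta (DB_0^{-1}G_{12}^{(0)}\xi_1)^{2}$: it receives the contribution $-\tfrac{1}{2}\rho$ from expanding the $-\tfrac{\rho}{2}\eta$ square and $+\tfrac{\rho^2}{2\rho_1}$ from expanding the $+\tfrac{\rho_1}{2}\eta$ square, and these combine to $\tfrac{\rho(\rho-\rho_1)}{2\rho_1}$, which is exactly the coefficient appearing in $\mathrm{III}^{(3)}$. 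A parallel verification handles the $\eta_1$-square, where one uses the identity $(G_{11}^{(0)})^{2} - (G_{12}^{(0)})^{2} = (G^{(0)})^{2}$ (already invoked before Proposition \ref{proposition-all-terms}) to match the mixed $\eta_1 \xi \xi_1$ term against the corresponding entry of $\mathrm{II}^{(3)}$.

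The main obstacle is purely bookkeeping; no new identities beyond those recorded just before Proposition \ref{proposition-all-terms} are needed. The regrouping is guided by the observation that the underlying quadratic form associated with the kinetic energy is symmetric and positive, so completing the square is the natural way to reorganize its perturbative expansion, and doing so makes the dependence on the differences $G_{11}^{(0)}\xi - G_{12}^{(0)}\xi_1$ and $G^{(0)}\xi + \tfrac{\rho}{\rho_1}G_{12}^{(0)}\xi_1$ manifest, which will be convenient in the subsequent scaling and reduction steps.
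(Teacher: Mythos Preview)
Your proposal is correct and follows the same approach as the paper, which simply says that \eqref{cubic-Hamiltonian-1} is obtained by regrouping the contributions of Proposition~\ref{proposition-all-terms}; you have in fact supplied more detail than the paper does. One small remark: the identity $(G_{11}^{(0)})^{2} - (G_{12}^{(0)})^{2} = (G^{(0)})^{2}$ is not actually needed at this regrouping stage, since the mixed $\eta_1\,\xi\,\xi_1$ term in $-\mathrm{II}^{(3)}$ already matches the cross term of the $\eta_1$-square verbatim (all the operators involved are Fourier multipliers and hence commute); that identity was used earlier, in deriving $\mathrm{II}^{(3)}$ itself.
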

The proofs of Propositions \ref{proposition-all-terms}-\ref{cubicH-summary} are  a little lengthy but straightforward. The main steps are given in Appendix \ref{appendix-proof}. 

\subsection{Cubic terms of the kinetic energy in normal modes coordinates}
We now write the cubic   terms of the Hamiltonian in normal modes coordinates \eqref{canonical-transformation-1}.
\begin{proposition} \label{cubic in normal variables}
The cubic part of the Hamiltonian \eqref{cubic-Hamiltonian-1} in the normal-mode coordinates  \eqref{canonical-transformation-1} is given by
\begin{equation}
\label{cubic-Hamiltonian-mu-zeta-form}
\begin{aligned} 
H^{(3)} &= -\frac{\rho-\rho_{1}}{2\sqrt{g(\rho-\rho_{1})}}\int_{\mathbb{R}} (b^{+}\mu - b^{-}\mu_{1}) \left (\mathcal{A}_{1}\zeta - \mathcal{B}_{1}\zeta_{1} \right )^{2} \, dx \\
&\quad +\frac{\rho_{1}}{2\sqrt{g\rho_{1}}} \int_{\mathbb{R}} (a^{+}\mu - a^{-}\mu_{1}) \left (\mathcal{A}_{2}\zeta-\mathcal{B}_{2}\zeta_{1} \right )^{2} \, dx \\
&\quad -\frac{\rho}{2\sqrt{g(\rho-\rho_{1})}} \int_{\mathbb{R}} (b^{+}\mu-b^{-}\mu_{1}) \left (\mathcal{A}_{3}\zeta - \mathcal{B}_{3}\zeta_{1} \right )^{2}\, dx \\
&\quad + \frac{\rho_{1}}{2\sqrt{g(\rho-\rho_{1})}} \int_{\mathbb{R}} (b^{+}\mu-b^{-}\mu_{1}) \left (\mathcal{A}_{4}\zeta - \mathcal{B}_{4}\zeta_{1} \right )^{2}\, dx \\
&\quad + \frac{1}{2\rho_{1}\sqrt{g\rho_{1}}} \int_{\mathbb{R}} (a^{+}\mu - a^{-}\mu_{1}) \left (\mathcal{A}_{5}\zeta - \mathcal{B}_{5}\zeta_{1} \right )^{2} \, dx,\\
& = : R_1 + R_2 + R_3+R_4+R_5.
\end{aligned}    
\end{equation}
where $R_j$ stands for the integral expression in the row $j$ of \eqref{cubic-Hamiltonian-mu-zeta-form} and 
\begin{equation}
\label{mathcal-A-B-expressions}
\begin{aligned}
\mathcal{A}_{1} (D) &:= \frac{1}{\sqrt{g(\rho-\rho_1)}} (b^+ Q_a - a^+ Q_b), \quad \mathcal{B}_{1} (D) :=  \frac{1}{\sqrt{g(\rho-\rho_1)}} (b^- Q_a - a^- Q_b),\\
\mathcal{A}_{2} (D) &:= \frac{1}{\sqrt{g\rho_1}} (a^+ Q_c - b^+ Q_b),\quad
\mathcal{B}_{2} (D) := \frac{1}{\sqrt{g\rho_1}} (a^- Q_c - b^- Q_b)\\
\mathcal{A}_3 (D) &: = \sgn(D) \mathcal{A}_1 (D), \qquad \mathcal{B}_3 (D): = \sgn(D) \mathcal{B}_1 (D), \\
\mathcal{A}_{4} (D) & := b^{+}\sqrt{g(\rho-\rho_{1})}DB_{0}^{-1}G^{(0)} + \frac{\rho~ \sgn(D)}{\rho_{1} \sqrt{g(\rho-\rho_1)}}a^{+}Q_b, \\
\mathcal{B}_{4} (D) &:= b^{-}\sqrt{g(\rho-\rho_{1})}DB_{0}^{-1}G^{(0)} + \frac{\rho~ \sgn(D)}{\rho_{1} \sqrt{g(\rho-\rho_1)}}a^{-}Q_b,\\
\mathcal{A}_{5} (D) &= -\sqrt{g\rho_{1}} D a^{+}, \qquad
\mathcal{B}_{5} (D) = -\sqrt{g\rho_{1}} D a^{-} ~.
\end{aligned}
\end{equation}
\end{proposition}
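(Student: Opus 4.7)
The strategy is direct substitution: invert the canonical transformation \eqref{canonical-transformation-1} to express $(\eta, \xi, \eta_1, \xi_1)$ in the normal-mode coordinates $(\mu, \zeta, \mu_1, \zeta_1)$, plug into the five terms of $H^{(3)}$ in \eqref{cubic-Hamiltonian-1}, and then regroup each resulting integrand into the shape $(\text{linear in }\mu,\mu_1)\cdot(\text{quadratic in }\zeta,\zeta_1)$ announced by \eqref{cubic-Hamiltonian-mu-zeta-form}. Because $a^\pm(D), b^\pm(D), G^{(0)}, G_{ij}^{(0)}, B_0$ are all Fourier multipliers, every product involved commutes, so the substitution is purely algebraic once the right identities are in place.

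First I would verify, from the explicit expressions in \eqref{a-b-pm-definition}, the Wronskian-type relation $a^-b^+ - a^+ b^- = 1$ (which is precisely the condition that makes \eqref{canonical-transformation-1} canonical). This gives the inverse transformation in the clean form
\begin{equation*}
\eta = \tfrac{1}{\sqrt{g(\rho-\rho_1)}}(b^+\mu - b^-\mu_1), \quad \eta_1 = \tfrac{1}{\sqrt{g\rho_1}}(a^-\mu_1 - a^+\mu),
\end{equation*}
\begin{equation*}
\xi = \sqrt{g(\rho-\rho_1)}(b^+\zeta - b^-\zeta_1), \quad \xi_1 = \sqrt{g\rho_1}(a^-\zeta_1 - a^+\zeta).
\end{equation*}
The prefactors $-(\rho-\rho_1)\eta, -\rho_1\eta_1, -\rho\eta, \rho_1\eta, -\rho_1^{-1}\eta_1$ in the five rows of \eqref{cubic-Hamiltonian-1} then produce exactly the linear factors $(b^+\mu - b^-\mu_1)/\sqrt{g(\rho-\rho_1)}$ or $(a^+\mu - a^-\mu_1)/\sqrt{g\rho_1}$ appearing in rows $R_1,\ldots,R_5$, matching the overall constants in \eqref{cubic-Hamiltonian-mu-zeta-form}.

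The substantive work is in the quadratic factors. For row $R_1$, substituting $\xi$ and $\xi_1$ into $G^{(0)}B_0^{-1}(G_{11}^{(0)}\xi - G_{12}^{(0)}\xi_1)$ and using $G^{(0)}B_0^{-1}G_{11}^{(0)} = Q_a/(g(\rho-\rho_1))$ together with $G^{(0)}B_0^{-1}G_{12}^{(0)} = -Q_b/(g\sqrt{\rho_1(\rho-\rho_1)})$ from \eqref{A-B-C-coeff} gives the $\zeta$ coefficient $(b^+ Q_a - a^+ Q_b)/\sqrt{g(\rho-\rho_1)} = \mathcal{A}_1$ and similarly $-\mathcal{B}_1$ for $\zeta_1$. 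Row $R_2$ proceeds analogously using $G^{(0)}B_0^{-1}(\rho_1 G_{11}^{(0)}+\rho G^{(0)}) = Q_c/g$ and recovers $(a^\pm Q_c - b^\pm Q_b)/\sqrt{g\rho_1}$. For $R_3$ I would just factor $D = \sgn(D)\,G^{(0)} = \sgn(D)\,|D|$, which turns the $R_1$ computation into the same thing premultiplied by $\sgn(D)$, explaining $\mathcal{A}_3 = \sgn(D)\mathcal{A}_1$, $\mathcal{B}_3 = \sgn(D)\mathcal{B}_1$. Row $R_4$ mixes a $DB_0^{-1}G^{(0)}\xi$ term (which is not pre-simplified by any of $Q_a, Q_b, Q_c$ and therefore stays in its raw form in $\mathcal{A}_4, \mathcal{B}_4$) with the $\frac{\rho}{\rho_1}DB_0^{-1}G_{12}^{(0)}\xi_1$ term (rewritten via $Q_b$), accounting for the two-piece shape of $\mathcal{A}_4, \mathcal{B}_4$; the extra $\sgn(D)$ in the $Q_b$-piece comes from $DB_0^{-1}G_{12}^{(0)} = \sgn(D)\,G^{(0)}B_0^{-1}G_{12}^{(0)}$. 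Row $R_5$ is the simplest: $D\xi_1 = \sqrt{g\rho_1}(a^-D\zeta_1 - a^+D\zeta)$, so $(D\xi_1)^2$ becomes exactly $(\mathcal{A}_5\zeta - \mathcal{B}_5\zeta_1)^2$ with $\mathcal{A}_5 = -\sqrt{g\rho_1}Da^+$, $\mathcal{B}_5 = -\sqrt{g\rho_1}Da^-$.

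There is no conceptual obstacle — the main hazard is bookkeeping. Concretely, I expect the hardest (or most error-prone) step to be confirming the sign and normalization of $a^-b^+ - a^+b^- = 1$ from \eqref{a-b-pm-definition} and then carefully tracking the $\sqrt{g(\rho-\rho_1)}, \sqrt{g\rho_1}$ factors as they cancel between the prefactor and the squared factor; a single sign slip corrupts all five rows simultaneously. Once that is pinned down, each row reduces to the identities above and the proof assembles itself.
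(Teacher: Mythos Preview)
Your approach is exactly the paper's: invert the canonical transformation to get the formulas for $\eta, \eta_1, \xi, \xi_1$ in terms of $(\mu,\zeta,\mu_1,\zeta_1)$, substitute into each of the five terms of \eqref{cubic-Hamiltonian-1}, and regroup using the definitions \eqref{A-B-C-coeff} of $Q_a, Q_b, Q_c$. You supply considerably more detail than the paper's one-line proof (which just states the inverse relations and says ``grouping the terms''), but the strategy and the computation are identical.
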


\noindent The proof consists of rewriting each term of \eqref{cubic-Hamiltonian-1} in terms of new variables \eqref{canonical-transformation-1}, using the inverse relations
\begin{align} \label{inverse transformation}
\begin{cases}
\eta &= \frac{1}{\sqrt{g(\rho-\rho_{1})}} \left( b^+ \mu - b^- \mu_{1} \right) \\
\eta_{1} &= -\frac{1}{\sqrt{g\rho_{1}}}\left(a^+ \mu - a^- \mu_{1} \right)
\end{cases}
\quad \text{and} \quad 
\begin{cases}
\xi &= \sqrt{g(\rho-\rho_{1})} \left( b^{+} \zeta - b^{-} \zeta_{1} \right) \\
\xi_{1} &= -\sqrt{g\rho_{1}} \left( a^{+} \zeta - a^{-} \zeta_{1} \right)
\end{cases}
\end{align}
and grouping the terms.

\section{Benjamin-Ono scaling and modulational Ansatz}

We now introduce the scaling regime under consideration. We assume the typical wavelength $\lambda$ of the internal wave is large
when it is compared to the typical depth of the upper layer $h_1$,  and its amplitude  $a$  of a typical wave is small when compared to $h_1$, with the relation
$$\varepsilon =  \frac{h_1}{\lambda} = \frac{a}{h_1} \ll 1.$$
The following scalings are thus introduced
\begin{equation}
\label{long-wave-scaling}
X = \varepsilon x, \qquad \mu(X) = \varepsilon \widetilde{\mu}(X), \qquad \zeta (x) = \widetilde{\zeta}(X).
\end{equation}
On the other hand, the surface modes are in the form of near-monochromatic waves with carrier wavenumber $k_{0} >0$ and amplitude $a_{1}$ satisfying  $\varepsilon_{1} =  k_{0}a_{1} \ll 1.$
We thus define 
\begin{equation}
\label{mono-waves-scaling}
\begin{aligned}
\mu_{1}(x,t) &= \frac{\varepsilon_{1}}{\sqrt{2}}\omega_{1}^{1/2}(D_{x})(\q(X,t)e^{ik_{0}x} + \overline{\q}(X,t)e^{-ik_{0}x})  \\
\zeta_{1}(x,t) &= \frac{\varepsilon_{1}}{\sqrt{2}i}\omega_{1}^{-1/2}(D_{x})(\q(X,t)e^{ik_{0}x} - \overline{\q}(X,t)e^{-ik_{0}x}),
\end{aligned}
\end{equation}
where the weights $\omega_{1}^{1/2}(D)$ have been introduced to diagonalize the corresponding quadratic part of the Hamiltonian.

In addition, we assume the relation $\varepsilon_{1} = \varepsilon^{1+\delta}$ with $\delta \in \left(0,\frac{1}{2}\right)$  between the small parameters $\varepsilon_1$ and $\varepsilon$ to emphasize that  the internal wave amplitude is larger than that of the surface wave. The restriction $\delta < \frac{1}{2}$ relates to the ordering of different small terms in the expansion, see Remark \ref{remark-delta-values}.


To incorporate the Benjamin-Ono scaling and the modulational Ansatz into the Hamiltonian, we need two key elements of asymptotic analysis. Lemma \ref{lemma-fourier-action} addresses the multi-scale character of the problem of fast oscillations versus long-wave scaling, by describing the action of a Fourier multiplier on a multiple-scale function \cite{CSSNonl92}. 
 On the other hand, Lemma \ref{lemma-scale-separation} is a scale-separation lemma (in a simplified form of Lemma 3.2 of \cite{CGNS05})  that expresses that fast oscillations essentially homogenize and do not contribute to the Hamiltonian. In our case, $g(x) = e^{i\alpha x}$ is continuous and periodic with respect to the translation lattice $\frac{2\pi}{\alpha}\mathbb{Z}$. 
\begin{lemma}
\label{lemma-fourier-action}
Let $m(D_x)$ be a Fourier multiplier, then for sufficiently smooth $f$ and sufficiently small $\varepsilon$ we have the following asymptotic expansion
\begin{equation}
\label{fourier-multiplier-action-k0}
\begin{aligned}
m(D_x) \left( e^{ik_0x} f(X) \right) & = e^{ik_0x} m(k_0 + \varepsilon D_X) f(X) \\
& = e^{ik_0x} \left( m(k_0) + \varepsilon \partial_k m(k_0) D_X f(X) + \frac{\varepsilon^2}{2} \partial_k^2 m(k_0) D_X^2 f(X) + ... \right),
\end{aligned}
\end{equation}
where $D_x = \varepsilon D_X$.
Letting $k_0=0$, the expansion becomes
\begin{equation}
\label{fourier-multiplier-action-0}
\begin{aligned}
m(D_x) f(X) & = m(\varepsilon D_X) f(X) \\
& = \left( m(0) + \varepsilon \partial_k m(0) D_X f(X) + \frac{\varepsilon^2}{2} \partial_k^2 m(0) D_X^2 f(X) + ... \right).
\end{aligned}
\end{equation}
\end{lemma}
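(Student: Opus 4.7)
The plan is to establish the identity at the level of Fourier transforms and then Taylor expand the resulting symbol. Writing $\hat{f}(\kappa) = \int_{\mathbb{R}} e^{-i\kappa X} f(X)\, dX$ for the Fourier transform of $f$ in the slow variable $X$, and noting that $X = \varepsilon x$, the function $g(x) := e^{ik_0 x} f(\varepsilon x)$ has Fourier transform in $x$ equal to
\begin{equation*}
\hat{g}(k) = \varepsilon^{-1}\, \hat{f}\!\left(\tfrac{k - k_0}{\varepsilon}\right).
\end{equation*}
Since the multiplier acts as $\widehat{m(D_x)g}(k) = m(k)\hat{g}(k)$, applying the inverse Fourier transform and making the change of variables $k = k_0 + \varepsilon\kappa$ gives
\begin{equation*}
m(D_x)g(x) = \frac{1}{2\pi}\int_{\mathbb{R}} e^{i(k_0 + \varepsilon\kappa)x}\, m(k_0 + \varepsilon\kappa)\, \hat{f}(\kappa)\, d\kappa = e^{ik_0 x}\, \bigl[m(k_0 + \varepsilon D_X)f\bigr](X),
\end{equation*}
which is exactly the first equality in \eqref{fourier-multiplier-action-k0}. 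This uses nothing more than the chain rule identity $D_x = \varepsilon D_X$ in disguise.

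For the second equality, I would Taylor expand the symbol $k \mapsto m(k)$ around $k_0$:
\begin{equation*}
m(k_0 + \varepsilon\kappa) = m(k_0) + \varepsilon\kappa\,\partial_k m(k_0) + \tfrac{\varepsilon^2}{2}\kappa^2\,\partial_k^2 m(k_0) + \cdots
\end{equation*}
Substituting this expansion inside the integral and identifying each factor of $\kappa^j$ with the operator $D_X^j$ (via $\frac{1}{2\pi}\int e^{i\kappa X}\kappa^j \hat{f}(\kappa)\,d\kappa = D_X^j f(X)$) yields the claimed series. The expansion \eqref{fourier-multiplier-action-0} is then the special case $k_0 = 0$ of the same argument applied directly to $m(D_x)f(\varepsilon x) = [m(\varepsilon D_X)f](X)$.

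The only real subtlety is justifying the Taylor expansion: one needs $m$ to be smooth in a neighborhood of $k_0$ (or of $0$) and $f$ sufficiently regular and decaying so that $\hat{f}(\kappa)$ is concentrated near $\kappa = 0$, making $\varepsilon\kappa$ small on the effective support. In practice this is ensured by taking $f \in \mathcal{S}(\mathbb{R})$ (or any Sobolev space of high enough order), truncating the Taylor series at order $N$, and estimating the remainder $R_N(\varepsilon, \kappa) = m(k_0 + \varepsilon\kappa) - \sum_{j=0}^{N} \frac{(\varepsilon\kappa)^j}{j!}\partial_k^j m(k_0)$ by $C \varepsilon^{N+1} |\kappa|^{N+1}$ on the relevant range, which translates into an $\mathcal{O}(\varepsilon^{N+1})$ error in the Sobolev norm of the remainder term. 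For the purposes of deriving the reduced Hamiltonian system in later sections, only the first few terms of the expansion will be needed, and this formal expansion provides them. The main obstacle, if any, is purely bookkeeping: keeping track of exactly which seminorms of $f$ control which terms when $m$ is a symbol with polynomial growth (as will be the case for operators like $|D|\coth(h_1 D)$ that appear in $H^{(2)}$).
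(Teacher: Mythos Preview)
Your argument is correct and is the standard way to establish this identity: compute the Fourier transform of $e^{ik_0 x}f(\varepsilon x)$, apply the multiplier, change variables, and then Taylor expand the symbol around $k_0$. The paper itself does not give a proof of this lemma; it is stated as a known fact with a reference to \cite{CSSNonl92}, so there is nothing to compare against beyond noting that your derivation matches the usual one found in that reference.
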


\begin{lemma}
\label{lemma-scale-separation}
Let $f$ be a real-valued function of Schwartz class, $\alpha \in \mathbb{R}$ be a nonzero
constant and $\varepsilon$ be sufficiently small. Then, for all natural $N$,
\begin{equation}
\label{scale-separation}
\int_\mathbb{R} e^{i\alpha x} f(X) dx = \mathcal{O} (\varepsilon^N).
\end{equation}
\end{lemma}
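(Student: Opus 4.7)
The plan is to exploit the oscillation of $e^{i\alpha x}$ against the slowly-varying profile $f(X)=f(\varepsilon x)$ by repeated integration by parts, which is the standard nonstationary-phase trick. Equivalently, one could change variables $X=\varepsilon x$ and recognize the integral (up to a factor of $1/\varepsilon$) as $\widehat{f}(-\alpha/\varepsilon)$; the Schwartz decay of $\widehat f$ at the large frequency $\alpha/\varepsilon$ then immediately yields the result. I will outline the integration-by-parts approach since it is more self-contained.

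First, write $e^{i\alpha x} = \frac{1}{i\alpha}\frac{d}{dx} e^{i\alpha x}$, so that
\begin{equation*}
\int_\mathbb{R} e^{i\alpha x} f(\varepsilon x)\, dx = \frac{1}{i\alpha}\int_\mathbb{R} \frac{d}{dx}\bigl(e^{i\alpha x}\bigr) f(\varepsilon x)\, dx = -\frac{\varepsilon}{i\alpha}\int_\mathbb{R} e^{i\alpha x} f'(\varepsilon x)\, dx,
\end{equation*}
where the boundary terms vanish because $f$ is Schwartz. Iterating this identity $N+1$ times, each application producing a gain of $\varepsilon/|\alpha|$, yields
\begin{equation*}
\int_\mathbb{R} e^{i\alpha x} f(\varepsilon x)\, dx = \left(-\frac{\varepsilon}{i\alpha}\right)^{N+1}\int_\mathbb{R} e^{i\alpha x} f^{(N+1)}(\varepsilon x)\, dx.
\end{equation*}

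Next, I estimate the remaining integral crudely in absolute value and change variables $X=\varepsilon x$:
\begin{equation*}
\left|\int_\mathbb{R} e^{i\alpha x} f^{(N+1)}(\varepsilon x)\, dx\right| \le \int_\mathbb{R} |f^{(N+1)}(\varepsilon x)|\, dx = \frac{1}{\varepsilon}\, \|f^{(N+1)}\|_{L^1(\mathbb{R})}.
\end{equation*}
Combining the last two displays gives
\begin{equation*}
\left|\int_\mathbb{R} e^{i\alpha x} f(\varepsilon x)\, dx\right| \le \frac{\varepsilon^{N}}{|\alpha|^{N+1}}\, \|f^{(N+1)}\|_{L^1(\mathbb{R})},
\end{equation*}
and the Schwartz hypothesis on $f$ ensures $\|f^{(N+1)}\|_{L^1(\mathbb{R})}<\infty$ for every $N$. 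This is the claimed estimate $\mathcal O(\varepsilon^N)$.

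There is no real obstacle here; the only subtle points are (i) keeping track of one extra factor of $\varepsilon$ that is lost after the $L^1$ estimate (hence the need to integrate by parts $N+1$ times rather than $N$) and (ii) justifying the vanishing boundary terms, which follows at once from $f\in\mathcal S(\mathbb R)$. The lemma holds verbatim if $f\in C^{N+1}$ with $f,f',\dots,f^{(N+1)}\in L^1$, and the constant in the $\mathcal O$ is proportional to $\|f^{(N+1)}\|_{L^1}/|\alpha|^{N+1}$, which is the quantitative content that the subsequent sections will implicitly use when bounding residual terms in the Hamiltonian expansion.
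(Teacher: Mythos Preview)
Your proof is correct. The paper does not actually give its own proof of this lemma; it simply states it as a simplified form of Lemma~3.2 of \cite{CGNS05} and uses it as a black box. Your integration-by-parts (nonstationary-phase) argument is the standard one and is precisely what underlies that reference, so there is nothing to compare.
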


\subsection{The quadratic Hamiltonian under the scaling regime}
Inserting the above scalings into the quadratic Hamiltonian \eqref{H2-expression-2}, we get 
\begin{equation}
\label{H2-expression-modulational-initial}
\begin{aligned}
H^{(2)} &= \frac{1}{2} \int_{\R} \left( \varepsilon^{-1} \widetilde{\zeta}~ \omega^2(\varepsilon D_X) \widetilde{\zeta}   + \varepsilon \widetilde{\mu}^{2}  + \varepsilon^{-1} \varepsilon_1^2 \overline{\q}~ \omega_1 (k_0 + \varepsilon D_X) \q \right) ~dX,
\end{aligned}
\end{equation}
where $D_x = \varepsilon D_X$. The expansion for $\omega_1(k_0+\varepsilon D_X)$ follows \eqref{fourier-multiplier-action-k0} since $\omega_1$ in \eqref{dispersion-relations} is smooth around $k_0$. In contrast, $\omega^2$ is not sufficiently smooth near $k=0$, so its expansion is obtained from the expansion of \eqref{dispersion-relation-finite-depth} in the deep water limit $h\to \infty$: 
\begin{equation}
\label{omega-expansion}
\begin{aligned}
\omega^2(\varepsilon D_X) = \varepsilon^{2} 
\Omega_0
D_{X}^{2} + \varepsilon^{3}
\Omega_1 D_{X}^{2}|D_{X}| + \varepsilon^{4} 
\Omega_2 D_{X}^{4} + \mathcal{O}(\varepsilon^5),
\end{aligned}
\end{equation}
with
\begin{equation}
\label{Omega-defn}
\begin{aligned}
&
\Omega_0= \frac{gh_1(\rho-\rho_1)}{\rho}, \quad
\Omega_1 = - \frac{g(\rho-\rho_1)\rho_1 h_1^2}{\rho^2}, \quad 
\Omega_2 = \frac{g(\rho-\rho_1) h_1^3}{\rho} \left(\frac{\rho_1^2}{\rho^2} - \frac{1}{3} \right).
\end{aligned}
\end{equation}
The quadratic Hamiltonian in \eqref{H2-expression-modulational-initial} becomes
\begin{equation}
\label{H2-expression-modulational}
\begin{aligned}
H^{(2)} &= \int_{\R} \bigg [-\varepsilon \frac{\Omega_0}{2} (D_{X}\widetilde{\zeta})^{2} + \varepsilon^{2} \frac{\Omega_1}{2} \widetilde{\zeta} (D_{X}^{2} |D_X| \widetilde{\zeta}) \\
&\qquad \qquad \qquad  + \varepsilon^{3} \frac{\Omega_2}{2} \widetilde{\zeta}(D_{X}^{4}\widetilde{\zeta}) + \varepsilon \widetilde{\mu}^{2} + \frac{\varepsilon_{1}^{2}}{\varepsilon} \omega_{1}(k_{0}) |\q|^{2} \\
&\qquad \qquad \qquad  + \varepsilon_{1}^{2}\omega_{1}'(k_{0})\overline{\q}(D_{X}\q) + \varepsilon \varepsilon_{1}^{2} \frac{\omega_{1}''(k_{0})}{2} \overline{\q}(D_{X}^{2}\q) \bigg ]\, dX + \mathcal{O}(\varepsilon^{4}).
\end{aligned}
\end{equation}
The transformations \eqref{long-wave-scaling}-\eqref{mono-waves-scaling} retain the standard symplectic form of \eqref{symplectic-form-mu-mu_1}, and the corresponding equation of motion in new variables can be written as 
\begin{equation}
\label{symplectic-form-tilde-mu-v_1}
\begin{aligned}
\partial_{t} \begin{pmatrix} \widetilde\mu \\ \widetilde\zeta \\ \q \\ \overline{\q} \end{pmatrix}
&= \begin{pmatrix} 0 & 1 & 0 & 0 \\
-1 & 0 & 0 & 0 \\
0 & 0 & 0 & -i\varepsilon \varepsilon_1^{-2} \\
0 & 0 & i\varepsilon \varepsilon_1^{-2} & 0
\end{pmatrix}
\begin{pmatrix} \delta_{\widetilde\mu}H \\ \delta_{\widetilde\zeta}H \\
\delta_{\q} H \\ \delta_{\overline{\q}} H \end{pmatrix}.
\end{aligned}
\end{equation}

\subsection{The cubic Hamiltonian under the scaling regime}

For the cubic Hamiltonian \eqref{cubic-Hamiltonian-mu-zeta-form} we show that, after the transformations \eqref{long-wave-scaling}--\eqref{mono-waves-scaling}, some of its terms become of higher order and do not contribute at the order of our approximation. 

Expanding the brackets inside the integral of $R_1$ in \eqref{cubic-Hamiltonian-mu-zeta-form}, 
the terms involving $(b^+ \mu) (\mathcal{A}_1 \zeta) (\mathcal{B}_1 \zeta_1)$, $(b^- \mu_1) (\mathcal{A}_1 \zeta)^2$ and $(b^- \mu_1) (\mathcal{B}_1 \zeta_1)^2$ 
can be estimated by the scale separation lemma as follows 
\begin{equation}
\label{scale-separ-lemma-application}
\begin{aligned}
\int_\mathbb{R} (b^- \mu_1) (\mathcal{A}_1 \zeta)^2 dx
&= \int_{\R} \left (\frac{\varepsilon_{1}}{\sqrt{2}} (b^- \omega_{1}^{1/2})(D_{x})\big (\q(X)e^{ik_{0}x} + c.c. \big ) \right )(\mathcal{A}_1(D_{x})\zeta)^{2} \, dx \\
&= \frac{\varepsilon_{1}}{\sqrt{2}} \int_{\R} e^{ik_{0}x} \left ((b^-\omega_{1}^{1/2})(k_{0}+\varepsilon D_{X})\q(X) \right )(\mathcal{A}_1 (\varepsilon D_{X})\widetilde{\zeta})^{2} \, dx + \mathrm{c.c.} 
\end{aligned}
\end{equation}
Inside the integral we have a product of a fast oscillating function $e^{ik_0 x}$ with a slowly modulated smooth function of $X$. Applying the scale separation Lemma \ref{lemma-scale-separation} to 
this integral, the integral is of order $\mathcal{O}(\varepsilon_1 \varepsilon^N)$
for sufficiently large $N$. 
The other two terms in $R_1$ as well as similar terms in $R_2, ..., R_5$ are treated in the same fashion, the details of which are in \cite{CPAK23}. The remaining terms in the cubic Hamiltonian are
\begin{equation}
\label{cubic-Hamiltonian-leading-terms}
\begin{aligned}
&H^{(3)} = -\frac{\rho-\rho_{1}}{2\sqrt{g(\rho-\rho_{1})}}\int_{\R} \left [(b^{+}\mu)(\mathcal{A}_{1}\zeta)^{2} + (b^{+}\mu)(\mathcal{B}_{1}\zeta_{1})^{2} + 2(b^{-}\mu_{1})(\mathcal{A}_{1}\zeta)(\mathcal{B}_{1}\zeta_{1}) \right ] dx \\
&\qquad +\frac{\rho_{1}}{2\sqrt{g\rho_{1}}} \int_{\R} \left [(a^{+}\mu)(\mathcal{A}_{2}\zeta)^{2} + (a^{+}\mu)(\mathcal{B}_{2}\zeta_{1})^{2} + 2(a^{-}\mu_{1})(\mathcal{A}_{2}\zeta)(\mathcal{B}_{2}\zeta_{1}) \right ] dx\\
&\qquad -\frac{\rho}{2\sqrt{g(\rho-\rho_{1})}} \int_{\R} \left [(b^{+}\mu)(\mathcal{A}_{3}\zeta)^{2} + (b^{+}\mu)(\mathcal{B}_{3}\zeta_{1})^{2} + 2(b^{-}\mu_{1})(\mathcal{A}_{3}\zeta)(\mathcal{B}_{3}\zeta_{1}) \right ] dx \\
&\qquad + \frac{\rho_{1}}{2\sqrt{g(\rho-\rho_{1})}} \int_{\R} \left[(b^{+}\mu)(\mathcal{A}_{4}\zeta)^{2} + (b^{+}\mu)(\mathcal{B}_{4}\zeta_{1})^{2} + 2(b^{-}\mu_{1})(\mathcal{A}_{4}\zeta)(\mathcal{B}_{4}\zeta_{1}) \right ] dx \\
& + \frac{1}{2\rho_{1}\sqrt{g\rho_{1}}} \int_{\R} \left [(a^{+}\mu )(\mathcal{A}_{5}\zeta)^{2} + (a^{+}\mu)(\mathcal{B}_{5}\zeta_{1})^{2} + 2(a^{-}\mu_{1})(\mathcal{A}_{5}\zeta)(\mathcal{B}_{5}\zeta_{1}) \right ] dx + \mathcal{O} (\varepsilon_1 \varepsilon^N).
\end{aligned}
\end{equation}
\begin{proposition}
\label{proposition-H3-kappa}
The cubic part of the Hamiltonian $H^{(3)}$ can be written as
\begin{equation}
\label{H3-in-tilde-variables}
\begin{aligned}
H^{(3)} = & \int_{\R} \bigg[\varepsilon^{2} \kappa \widetilde{\mu} (D_{X}\widetilde{\zeta})^{2} + \varepsilon_{1}^{2}(\kappa_{1}\widetilde{\mu} + \kappa_{2}\partial_{X}\widetilde{\zeta})|\q|^{2} + \varepsilon^{3}\kappa_{3} \widetilde{\mu}(|D_{X}|D_{X}\widetilde{\zeta})(D_{X}\widetilde{\zeta}) \\
&\qquad + \varepsilon \varepsilon_{1}^{2}(\kappa_{4}\widetilde{\mu} + \kappa_{5} \partial_{X}\widetilde{\zeta} ) \left(\q \overline{D_{X}\q} + \overline{\q} D_{X}\q \right) \\
&\qquad + \varepsilon \varepsilon_{1}^{2}\big (\kappa_{6} |D_{X}|\widetilde{\mu} + \kappa_{7} |D_{X}|\partial_{X}\widetilde{\zeta} \big )|\q|^{2} + \varepsilon^{3}\kappa_{8}(|D_{X}|\widetilde{\mu})(D_{X}\widetilde{\zeta})^{2} \bigg ]\, dX  + \mathcal{O}(\varepsilon^{4}),
\end{aligned}
\end{equation}
where the coefficients $\kappa$ and $\kappa_{j}$ only depend on physical parameters $g$, $h_{1}$, $\rho$ and $\rho_{1}$. Their expressions are given in Appendix \ref{appendix-coefficients}.
\end{proposition}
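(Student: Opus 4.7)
The plan is to substitute the scaling ansatz \eqref{long-wave-scaling}--\eqref{mono-waves-scaling} into each of the five rows $R_1,\dots,R_5$ of \eqref{cubic-Hamiltonian-leading-terms} and systematically apply Lemmas \ref{lemma-fourier-action} and \ref{lemma-scale-separation}. Every integrand has the structure (momentum)$\times$(elevation)$\times$(elevation), and the momentum factor is either long-wave ($b^{+}\mu$ or $a^{+}\mu$) or surface-wave ($b^{-}\mu_1$ or $a^{-}\mu_1$). I would partition the fifteen integrals into three types according to how many surface-wave factors appear.

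\textbf{Type A: no surface factors} (the terms $(b^+\mu)(\mathcal{A}_j\zeta)^2$, $(a^+\mu)(\mathcal{A}_j\zeta)^2$, for $j=1,2,3,4,5$). Here I apply Lemma \ref{lemma-fourier-action} with $k_0=0$, using the crucial observation that $Q_a,Q_b,Q_c$ all carry a factor of $G^{(0)}=|D|$ and so vanish at the origin. Consequently $\mathcal{A}_j(\varepsilon D_X)\widetilde\zeta=\varepsilon\, c_j^{(1)}D_X\widetilde\zeta+\varepsilon^2 c_j^{(2)}D_X|D_X|\widetilde\zeta+\mathcal{O}(\varepsilon^3)$ (the $|D_X|$ appears from non-analytic terms in the symbol arising through $\mathcal{A}_3=\mathrm{sgn}(D)\mathcal{A}_1$ and through $Q_a,Q_b,Q_c$). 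Combining with $b^{+}\mu=\varepsilon b^{+}(0)\widetilde\mu+\mathcal{O}(\varepsilon^2)$ and the Jacobian $dx=\varepsilon^{-1}dX$, the leading contribution is $\varepsilon^2\widetilde\mu(D_X\widetilde\zeta)^2$ (giving $\kappa$) with subleading $\varepsilon^3$ corrections producing $\kappa_3$ and $\kappa_8$.

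\textbf{Type B: long-wave momentum, two surface-wave elevations} (terms $(b^+\mu)(\mathcal{B}_j\zeta_1)^2$ and $(a^+\mu)(\mathcal{B}_j\zeta_1)^2$). Expanding $\mathcal{B}_j\zeta_1$ using \eqref{mono-waves-scaling} gives $(Ae^{ik_0 x}-Be^{-ik_0 x})$ with $A=(\mathcal{B}_j\omega_1^{-1/2})(k_0+\varepsilon D_X)q$ and $B=(\mathcal{B}_j\omega_1^{-1/2})(-k_0+\varepsilon D_X)\bar q$. Squaring and discarding the $e^{\pm 2ik_0 x}$ pieces by Lemma \ref{lemma-scale-separation} retains only the $2AB$ part. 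The long-wave factor $b^+\mu\sim \varepsilon b^+(0)\widetilde\mu$ times the $\varepsilon_1^2$ from the two surface elevations and the Jacobian gives leading order $\varepsilon_1^2$, producing $\kappa_1|q|^2$; the next-order corrections from Taylor-expanding $(\mathcal{B}_j\omega_1^{-1/2})$ around $k_0$ yield the $\kappa_4$ and $\kappa_6$ contributions at order $\varepsilon\varepsilon_1^2$.

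\textbf{Type C: surface-wave momentum, one long-wave and one surface-wave elevation} (terms $(b^-\mu_1)(\mathcal{A}_j\zeta)(\mathcal{B}_j\zeta_1)$ and $(a^-\mu_1)(\mathcal{A}_j\zeta)(\mathcal{B}_j\zeta_1)$). Both $\mu_1$ and $\zeta_1$ carry $e^{\pm ik_0 x}$ oscillations, so only the balanced combinations $e^{ik_0 x}\cdot e^{-ik_0 x}$ survive Lemma \ref{lemma-scale-separation}. Since $\mathcal{A}_j(0)=0$, the slow factor $\mathcal{A}_j(\varepsilon D_X)\widetilde\zeta$ contributes one power of $\varepsilon$ at leading order through $\partial_X\widetilde\zeta$; together with $\varepsilon_1^2$ from the two surface factors and $\varepsilon^{-1}$ from the Jacobian, the leading order is $\varepsilon_1^2$, giving $\kappa_2\partial_X\widetilde\zeta\,|q|^2$; higher-order Taylor terms produce $\kappa_5$ and $\kappa_7$. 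After assembling all contributions and discarding terms of order $\mathcal{O}(\varepsilon^4)$ or $\mathcal{O}(\varepsilon\varepsilon_1^2\cdot\varepsilon)=\mathcal{O}(\varepsilon^{4+2\delta})$ (using $\delta<1/2$), the result is precisely \eqref{H3-in-tilde-variables}.

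The main obstacle is the bookkeeping: there are fifteen integrals, each producing several contributions from different orders of the Taylor expansions of $a^\pm,b^\pm,\mathcal{A}_j,\mathcal{B}_j,\omega_1^{\pm 1/2}$ at $k=0$ or $k=k_0$. Tabulating each $\kappa$-coefficient as a sum of contributions from the relevant rows, together with careful tracking of signs from $D_X=-i\partial_X$ and of the complex-conjugate symmetries $F(-k_0+\varepsilon D_X)\bar q=\overline{F(k_0-\varepsilon D_X)q}$ when $F$ is real, is the delicate part. Once organized this way, each coefficient reduces to an algebraic expression in $g,h_1,\rho,\rho_1,k_0$ evaluated via the explicit formulas \eqref{A-B-C-coeff}--\eqref{mathcal-A-B-expressions}, yielding the values deferred to Appendix \ref{appendix-coefficients}.
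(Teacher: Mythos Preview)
Your approach is essentially the same as the paper's: substitute the scalings \eqref{long-wave-scaling}--\eqref{mono-waves-scaling} into the fifteen surviving integrals of \eqref{cubic-Hamiltonian-leading-terms}, Taylor-expand the multipliers via Lemma~\ref{lemma-fourier-action} (at $0$ for long-wave factors, at $k_0$ for surface-wave factors), discard oscillatory pieces via Lemma~\ref{lemma-scale-separation}, and collect contributions to each $\kappa_j$; the paper illustrates this with the three terms of row $R_4$, which are exactly your Types A, B, C. One small caveat: not every $\mathcal{A}_j$ has leading term $\varepsilon D_X$ --- for instance $\mathcal{A}_1,\mathcal{A}_2,\mathcal{A}_3$ begin at order $\varepsilon^2$ due to cancellations (this is why only $\mathcal{A}_4,\mathcal{A}_5$ appear in the formula for $\kappa$ in Appendix~\ref{appendix-coefficients}) --- but this is automatically absorbed by your bookkeeping and does not affect the argument.
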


\begin{proof}
Substitution of \eqref{long-wave-scaling}--\eqref{mono-waves-scaling} into $H^{(3)}$ leads to terms of different kind. For example, we sketch the calculations for the terms appearing in line $4$ of \eqref{cubic-Hamiltonian-mu-zeta-form}. 
\newline \\
For the first term $\int (b^+ \mu) (\mathcal{A}_4 \zeta)dx$, we write
\begin{equation*}
b^+(D_x) \mu(x) = \varepsilon b^+ (\varepsilon D_X) \widetilde \mu (X).
\end{equation*}
with
\begin{equation}
\label{b-plus-expansion}
\begin{aligned}
b^+(\varepsilon D_X) & = \sqrt{\frac{\rho_1}{\rho}} + \varepsilon \frac{\rho-\rho_1}{\rho} \sqrt{\frac{\rho_1}{\rho}} h_1 |D_X| + \mathcal{O}(\varepsilon^{2}) \\ 
&=: (b^{+})^{(0)} + \varepsilon (b^{+})^{(1)}|D_{X}| + \mathcal{O}(\varepsilon^{2}).
\end{aligned}
\end{equation}
Similar steps show that, since every term of $\mathcal{A}_4$ contains the element $D_x$, its expansion is 
\begin{equation}
\label{A4-expansion}
\begin{aligned}
\mathcal{A}_4(\varepsilon D_X) & =  \varepsilon \sqrt{\frac{g(\rho-\rho_1)}{\rho_1 \rho}} \left(1-\varepsilon \frac{\rho_1}{\rho} h_1 |D_X|\right) D_X + \mathcal{O}(\varepsilon^3) \\
& := \varepsilon \mathcal{A}_4^{(0)} D_X + \varepsilon^2 \mathcal{A}_4^{(1)}|D_{X}| D_X + \mathcal{O}(\varepsilon^3)
\end{aligned}
\end{equation}
with $\mathcal{A}_4^{(0)}$ and $\mathcal{A}_4^{(1)}$ representing constants.
As a result,
\begin{equation}
\label{b-mu-A4-zeta-term}
\begin{aligned}
\left( b^+ \mu \right) \left( \mathcal{A}_4 \zeta \right)^2 = &~ \varepsilon^3 (b^+)^{(0)} \left( \mathcal{A}_4^{(0)} \right)^2 \widetilde \mu \left( D_X \widetilde \zeta \right)^2 + \varepsilon^4 (b^+)^{(1)} \left( \mathcal{A}_4^{(0)} \right)^2 \left( |D_X| \widetilde \mu \right) \left( D_X \widetilde \zeta \right)^2 \\
& + 2 \varepsilon^4 (b^+)^{(0)}  \mathcal{A}_4^{(0)} \mathcal{A}_4^{(1)} 
\widetilde \mu \left( D_X \widetilde \zeta \right) \left( |D_X| D_X \widetilde \zeta \right) + \mathcal{O} (\varepsilon^5). 
\end{aligned}
\end{equation}
The coefficients of the first, second and third terms of \eqref{b-mu-A4-zeta-term} contribute to $\kappa$, $\kappa_8$ and $\kappa_3$, respectively. 

For the second term $\int (b^+ \mu) (\mathcal{B}_4 \zeta_1)^2 dx$ 
$\left( \mathcal{B}_4 \zeta_1 \right)^2$ we apply \eqref{fourier-multiplier-action-k0} to get
\begin{equation*}
\begin{aligned}
\left( \mathcal{B}_4 \zeta_1 \right)^2 = - \left( \mathcal{B}_4^2 \omega_1^{-1} \right)(k_0) |\q|^2 - \frac{\varepsilon}{2} \left( \mathcal{B}_4^2 \omega_1^{-1} \right)' (k_0) \left(\q \overline{D_{X}\q} + \overline{\q} D_{X}\q \right) + ...,
\end{aligned}
\end{equation*}
where $\mathcal{B}_4 (k)$ is the Fourier symbol of $\mathcal{B}_4 (D_x)$,
and we omit the terms with fast exponential functions $e^{ik_0x}$ since they disappear under the integral due to the scale-separation Lemma \ref{lemma-scale-separation}. Then, we have
\begin{equation*}
\begin{aligned}
(b^{+}\mu)(\mathcal{B}_{4}\zeta_{1})^{2} = &  -\varepsilon\varepsilon_{1}^{2} (b^{+})^{(0)} (\mathcal{B}_{4}^{2} \omega_{1}^{-1})(k_{0}) ~\widetilde{\mu} |\q|^{2} - \varepsilon^{2} \varepsilon_{1}^{2} (b^{+})^{(1)}(\mathcal{B}_{4}^{2}\omega_{1}^{-1})(k_{0})\left( |D_{X}| \widetilde{\mu} \right ) |\q|^{2}  \\
& - \frac{\varepsilon \varepsilon_{1}^{2}}{2} (b^{+})^{(0)} (\mathcal{B}_{4}^{2}\omega_1^{-1})'(k_{0}) \widetilde{\mu}\left( 
\q \overline{D_{X}\q} + \overline{\q} D_{X}\q \right) + \mathcal{O}(\varepsilon^{3}\varepsilon_{1}^{2}),
\end{aligned}
\end{equation*}
where the coefficients of the first, second and third terms contribute into $\kappa_1$, $\kappa_6$ and $\kappa_4$, respectively.
We similarly write the expansion for $(b^{-}\mu_{1})(\mathcal{A}_{4}\zeta)(\mathcal{B}_{4}\zeta_{1})$, which contributes into $\kappa_2$, $\kappa_5$ and $\kappa_7$. 
\end{proof}

\section{Derivation of the coupled Benjamin-Ono -- Schr\"{o}dinger system.}

In this section we derive a Benjamin-Ono -- Schr\"{o}dinger system for the Hamiltonian system \eqref{symplectic-form-tilde-mu-v_1} following the approach used in \cite{CGSNatHazards11}.

\subsection{Resonance condition}
The expansions \eqref{H2-expression-modulational} and \eqref{H3-in-tilde-variables} provide the expressions for the leading terms of the Hamiltonian $H$ in variables $(\widetilde \mu, \widetilde \zeta)$. The Hamiltonian can be further simplified using a moving frame of reference or, equivalently, combining it with the conserved momentum \eqref{momentum}. 
In the $(\zeta, \eta)$ variables, the momentum becomes 
\begin{equation*}
I = -\int_{\R} (\zeta \partial_{x}\mu + \zeta_{1} \partial_{x}\mu_{1})\, dx,
\end{equation*}
which after expansion under the Benjamin-Ono scaling and modulational Ansatz implies 
\begin{equation}
\label{momentum-in-new-variables}
\begin{aligned}
I = -\int_{\R} i\varepsilon \widetilde{\zeta}(D_{X}\widetilde{\mu}) -\frac{\varepsilon_{1}^{2}}{\varepsilon}k_{0}|\q|^{2} -\frac{\varepsilon_{1}^{2}}{2} \big [(D_{X}\q)\overline{\q} + \q(\overline{D_{X}\q}) \big ] \, dX + \mathcal{O}(\varepsilon_{1}^{2}\varepsilon^{N}).
\end{aligned}
\end{equation}
Combining the Hamiltonian with the momentum \eqref{momentum-in-new-variables}, we have
\begin{equation}
\label{reduced-Hamiltonian-tilde-mu-q-2}
\begin{aligned}
&H- cI = H^{(2)}-cI + H^{(3)} + H^{(4)} + ... \\
& = \int_{\R} \bigg [\varepsilon \frac{\Omega_0}{2} \left( \frac{\widetilde{\mu}^{2}}{\Omega_0}\left (1 - \frac{c^{2}}{\Omega_0} \right ) - \left (D_{X}\widetilde{\zeta} + \frac{ic\widetilde{\mu}}{\Omega_0}\right )^{2} \right) \\
& + \frac{\varepsilon_{1}^{2}}{\varepsilon}(\omega_{1}(k_{0})-ck_{0}) |\q|^2+ \varepsilon^{2} \frac{\Omega_1}{2} \widetilde{\zeta} (D_{X}^{2}|D_{X}| \widetilde{\zeta}) + \varepsilon^{3} \frac{\Omega_2}{2} \widetilde{\zeta}(D_{X}^{4}\widetilde{\zeta}) \\
&+ \frac{\varepsilon_{1}^{2}}{2}(\omega_{1}'(k_{0})-c) \left(\overline{\q} D_{X}\q + \q \overline{D_{X}\q} \right) + \frac{\varepsilon \varepsilon_{1}^{2}}{2} \omega_{1}''(k_{0})\overline{\q}(D_{X}^{2}\q) \bigg ]\, dX + H^{(3)} + \mathcal{O}\left(\varepsilon^4 + \frac{\varepsilon_1^4}{\varepsilon}\right).
\end{aligned}
\end{equation}
We choose 
\begin{equation}
\label{c0}
c = c_0:= \sqrt{\Omega_0} = \sqrt{ g\left( 1-\frac{\rho_1}{\rho} \right) h_1 }, 
\end{equation}
that eliminates $\widetilde \mu^2$ term in $\widehat H$. We then select the wavenumber $k_{0}$ of the carrier wave
\begin{equation}
\label{k0}
k_{0} =\frac{\rho}{4h_{1}\left(\rho - \rho_1 \right)}
\end{equation}
such that the phase velocity of the internal wave is equal to the group velocity of the surface wave.
\begin{equation}
\omega_1'(k_{0}) = c_{0},
\end{equation}
This is referred to as a resonance condition between two waves.
The flow of the Hamiltonian $H-c_0 I$ at our order of approximation  conserves the generalized wave action
\begin{equation*}
M := \int_{\R} |\q|^{2}\, dX.
\end{equation*}
Subtracting the multiple of $M$ from \eqref{reduced-Hamiltonian-tilde-mu-q}, we further reduce the Hamiltonian to 
\begin{equation}
\label{reduced-Hamiltonian-tilde-mu-q}
\begin{aligned}
\widehat H & = H- cI - \frac{\varepsilon_{1}^{2}}{\varepsilon}(\omega_{1}(k_{0})-ck_{0}) M \\
& = \int_{\R} \bigg [-\varepsilon \frac{\Omega_0}{2} \left (D_{X}\widetilde{\zeta} + \frac{ic\widetilde{\mu}}{\Omega_0}\right )^{2} + \varepsilon^{2} \frac{\Omega_1}{2} \widetilde{\zeta} (D_{X}^{2}|D_{X}| \widetilde{\zeta}) + \varepsilon^{3} \frac{\Omega_2}{2} \widetilde{\zeta}(D_{X}^{4}\widetilde{\zeta}) \\
&\qquad \quad  + \frac{\varepsilon \varepsilon_{1}^{2}}{2} \omega_{1}''(k_{0})\overline{\q}(D_{X}^{2}\q) \bigg ]\, dX + H^{(3)} + \mathcal{O}\left(\varepsilon^4 + \frac{\varepsilon_1^4}{\varepsilon}\right).
\end{aligned}
\end{equation}
It is usual for Boussinesq systems to introduce $\nu := \partial_X \widetilde \zeta$, which is related to the horizontal velocity. Under \eqref{c0}-\eqref{k0} and \eqref{H3-in-tilde-variables}, the Hamiltonian \eqref{reduced-Hamiltonian-tilde-mu-q-2} becomes
\begin{equation}
\label{reduced-H-tilde-u-v1}
\begin{aligned}
\widehat{H} 
&=\int_{\R} \bigg [\frac{\varepsilon}{2} \left (\widetilde{\mu}-c_{0} \nu \right )^{2} + \varepsilon^{2} \frac{\Omega_1}{2} \nu |D_{X}|\nu - \varepsilon^{3} \frac{\Omega_2}{2} \nu(\partial_X^2 \nu) \\
&\qquad \quad  + \frac{\varepsilon \varepsilon_{1}^{2}}{2} \omega_{1}''(k_{0})\overline{\q}(D_{X}^{2}\q) - \varepsilon^{2} \kappa \widetilde{\mu} \nu^{2} + \varepsilon_{1}^{2} \left(\kappa_{1}\widetilde{\mu} + \kappa_{2}\nu \right)|\q|^{2}  \\
&\qquad \quad - \varepsilon^{3}\kappa_{3}\widetilde{\mu}(|D_{X}| \nu)(\nu) + \varepsilon \varepsilon_{1}^{2} \left( \kappa_{4}\widetilde{\mu} + \kappa_{5}\nu \right) \left(\q \overline{D_{X}\q} + \overline{\q} D_{X}\q \right) \\
&\qquad \quad + \varepsilon \varepsilon_{1}^{2} \left(\kappa_{6}(|D_{X}|\widetilde{\mu}) + \kappa_{7} (|D_{X}| \nu ) \right)|\q|^{2} - \varepsilon^{3}\kappa_{8}(|D_{X}|\widetilde{\mu}) \nu ^{2} \bigg ]\, dX + \mathcal{O}\left(\varepsilon^4 + \frac{\varepsilon_1^4}{\varepsilon}\right)
\end{aligned}
\end{equation}
and the equation of motion is 
\begin{equation}
\label{symplectic-form-tilde-mu-u-v_1}
\begin{aligned}
\partial_{t} \begin{pmatrix} \widetilde\mu \\ \nu \\ \q \\ \overline{\q} \end{pmatrix}
&= \begin{pmatrix} 0 & -\partial_X & 0 & 0 \\
-\partial_X & 0 & 0 & 0 \\
0 & 0 & 0 & -i\varepsilon \varepsilon_1^{-2} \\
0 & 0 & i\varepsilon \varepsilon_1^{-2} & 0
\end{pmatrix}
\begin{pmatrix} \delta_{\widetilde\mu} \widehat H \\ \delta_{\nu} \widehat H \\
\delta_{\q} \widehat H \\ \delta_{\overline{\q}} \widehat H \end{pmatrix}.
\end{aligned}
\end{equation}

\subsection{Benjamin-Ono -- Schr\"{o}dinger system} 
We adopt the characteristic variables to analyze the dynamics of the system in the preferred direction of propagation. Denoting the principally right-moving component of the solution by $r(X,t)$ and principally left-moving component by $s(X,t)$, we set 
\begin{equation}
\begin{pmatrix} r \\ s \end{pmatrix} 
= \begin{pmatrix} \frac{1}{\sqrt{2c_{0}}} & \sqrt{\frac{c_{0}}{2}} \\
\frac{1}{\sqrt{2c_{0}}} & -\sqrt{\frac{c_{0}}{2}} \end{pmatrix}
\begin{pmatrix} \widetilde{\mu} \\ \nu \end{pmatrix}.
\end{equation}
We focus on the propagation to the right by restricting our attention to $r(X,t)$ and assuming that $s(X,t)$ is of order $\mathcal{O}(\varepsilon^2)$.
Then, Hamiltonian \eqref{reduced-H-tilde-u-v1} is
\begin{equation}
\label{reduced-H-r-v1}
\begin{aligned} 
\widehat{H} &= \int_{\R} \bigg [\varepsilon^{2} \frac{\Omega_1}{4c_{0}}  r |D_{X}|r  - \varepsilon^{3} \frac{\Omega_2}{4c_{0}}r \partial_{X}^{2}r  - \frac{\varepsilon \varepsilon_{1}^{2}}{2}\omega_{1}''(k_{0})~ \overline{\q}~ \partial_{X}^{2}\q  \\
&\qquad \quad - \varepsilon^{2} \widetilde \kappa r^{3} + \varepsilon_{1}^{2} 
\widetilde \kappa_1 r|\q|^{2} - \varepsilon^{3} \widetilde \kappa_2 r^{2} |D_{X}|r   \\
&\qquad \quad + \varepsilon \varepsilon_{1}^{2} \widetilde \kappa_3 r \left(\q \overline{D_{X}\q} + \overline{\q} D_{X}\q \right) + \varepsilon \varepsilon_{1}^{2} \widetilde \kappa_4 (|D_{X}|r)|\q|^{2} \bigg ] \, dX + \mathcal{O}\left(\varepsilon^4 + \frac{\varepsilon_1^4}{\varepsilon}\right),
\end{aligned}
\end{equation}
where the new coefficients $\widetilde \kappa_j$ are defined as
\begin{equation}
\label{kappa-tilde-defn}
\begin{aligned}
&\widetilde \kappa = \frac{\kappa}{2\sqrt{2c_0}}, \quad \widetilde \kappa_1 = \kappa_1 \sqrt{\frac{c_0}{2}} + \kappa_2 \frac{1}{\sqrt{2c_0}}, \quad \widetilde \kappa_2 = \frac{1}{2\sqrt{2c_0}}(\kappa_3+\kappa_8), \\
& \widetilde \kappa_3 = \kappa_4 \sqrt{\frac{c_0}{2}} + \kappa_5 \frac{1}{\sqrt{2c_0}}, \quad 
\widetilde \kappa_4 = \kappa_6 \sqrt{\frac{c_0}{2}} + \kappa_7 \frac{1}{\sqrt{2c_0}}. 
\end{aligned}
\end{equation}
The corresponding evolution equations are
\begin{equation}
\label{symplectic-form-r-v_1}
\begin{aligned}
\partial_{t} \begin{pmatrix} r \\ s \\ \q \\ \overline{\q} \end{pmatrix}
&= \begin{pmatrix} -\partial_X & 0 & 0 & 0 \\
0 & \partial_X & 0 & 0 \\
0 & 0 & 0 & -i\varepsilon \varepsilon_1^{-2} \\
0 & 0 & i\varepsilon \varepsilon_1^{-2} & 0
\end{pmatrix}
\begin{pmatrix} \delta_r \widehat H \\ \delta_s \widehat H \\
\delta_{\q} \widehat H \\ \delta_{\overline{\q}}\widehat H \end{pmatrix}.
\end{aligned}
\end{equation}
Then, the equation of motion for $r(X, t)$ satisfies the Benjamin-Ono (BO) equation expressed in a moving reference frame, $\partial_t r = - \partial_X \delta_r \widehat H$. Over the time scale $\tau = \varepsilon t$, the equation becomes
\begin{equation}
\label{benjamin-ono}
\begin{aligned}
\partial_{\tau}r
&= - \varepsilon \frac{\Omega_1}{2c_{0}} \partial_{X}(|D_{X}|r) + 6 \varepsilon \widetilde \kappa r(\partial_{X}r) + \varepsilon^2 \frac{\Omega_2}{2 c_{0}} \partial_{X}^{3}r \\
&\quad - \varepsilon^{1+2\delta}\widetilde \kappa_1 \partial_{X}(|\q|^{2}) + \varepsilon^2 \widetilde \kappa_2 \left( \partial_{X} (r |D_{X}|r) + |D_{X}|(r\partial_{X}r) \right) \\
&\quad - \varepsilon^{2+2\delta}\widetilde \kappa_3 \partial_{X} \left(\q \overline{D_{X}\q} + \overline{\q} D_{X}\q \right) - \varepsilon^{2+2\delta}\widetilde \kappa_4 \partial_{X} |D_{X}| \left( |\q|^{2} \right),
\end{aligned}
\end{equation}
where we used that $\varepsilon_1 = \varepsilon^{1+\delta}$ as in \eqref{mono-waves-scaling}.

Similarly, we write the equation of motion for the variable $\q(X,t)$ over the time scale $\tau$, 
\begin{equation}
\label{schrodinger}
\begin{aligned}
i\partial_{\tau} \q 
&= - \varepsilon \frac{\omega_{1}''(k_{0})}{2} \partial_{X}^{2}\q + 
\widetilde \kappa_1 r \q  - i \varepsilon \widetilde \kappa_3 (\partial_{X}(r\q) + r\partial_{X}\q) + \varepsilon \widetilde \kappa_4 \q |D_{X}|r,
\end{aligned}
\end{equation}
which is of a Schr\"{o}dinger type. 

\begin{remark}
\label{remark-scaling}
It may be more natural to choose the time scale $\tau_{1} = \varepsilon^{2}t$ for the Benjamin-Ono equation \eqref{benjamin-ono}, so that the contribution of leading terms on the right-hand side of the equation is of order $\mathcal{O}(1)$. However, we intentionally choose the same time scale $\tau = \varepsilon t$ in  \eqref{benjamin-ono}-\eqref{schrodinger}
since both equations contain coupling terms and the current form of the system is more suitable for further analysis of its local well-posedness in the next section.
\end{remark}

\begin{remark}
\label{remark-delta-values}
We comment on the restriction $\delta \in \left(0, \frac{1}{2}\right)$ used in the definition of $\varepsilon_1$ in \eqref{mono-waves-scaling}. The main reason for this choice is to ensure that terms of order $\varepsilon \varepsilon_1^2$ will not be absorbed into the remainder part $\mathcal{O} \left( \varepsilon^4 + \frac{\varepsilon_1^4}{\varepsilon} \right)$ in the Hamiltonian \eqref{reduced-H-r-v1}. Indeed, if $\delta = 0$ (which is $\varepsilon_1 = \varepsilon$), then $\varepsilon \varepsilon_1^2 =  \frac{\varepsilon_1^4}{\varepsilon}$. On the other hand, if $\delta \geq 1/2$ then 
$\varepsilon \varepsilon_1^2 = \mathcal{O} (\varepsilon^4)$.
\end{remark}

\begin{remark}
One can further consider the system \eqref{benjamin-ono}-\eqref{schrodinger} under the regime $\rho \approx \rho_1$. The interest is motivated by the fact that $\rho_1 / \rho \approx 0.99$ is a typical value of the density ratio in realistic conditions. For example, according to the measurements in \cite{OB80} for the Andaman Sea, $\rho_1 / \rho = 0.997$ and in \cite{BO02} off the Oregon coast,  $\rho_1 / \rho = 0.998$. We do not pursue such investigation here; however, we note that the coefficients $\widetilde \kappa$ and $\widetilde \kappa_j$ defined in \eqref{kappa-tilde-defn} get significantly simpler. We provide estimates on the $\widetilde \kappa$ and $\widetilde \kappa_j$ in this regime in Lemma \ref{lemma-kappa-asymptotics}.    

\end{remark}

\section{Local well-posedness of the reduced system}
\label{section-lwp}

\subsection{Setting of Problem}

We first review some local and global well-posedness results on the BO equation and related systems. 
We start with the Cauchy problem for the classical BO equation
\begin{align}
v_{t} + \mathcal{H}(v_{xx}) = vv_{x} \, ; \, v(x,0) = v_{0}(x),
\end{align}
where $\H$ is the Hilbert transform defined in \eqref{hilbert-transform-defn}. The equation is integrable by inverse scattering \cite{AF83}  and has an infinite number of conserved quantities, the first three being,  
 the $L^{2}$-norm
\begin{align}
\int_{\R} v(x,t)^{2} \, dx = \int_{\R} v(x,0)^{2} \, dx,
\end{align}
 the Hamiltonian
\begin{align}
\int_{\R} \left (v \mathcal{H}(v_{x}) - \frac{1}{3}v^{3} \right ) \, dx,
\end{align}
and  the next one in the hierarchy 
\begin{align}
\int_{\R} \left (v_{x}^{2} - \frac{3}{4}v^{2}\mathcal{H}(v_{x}) - \frac{1}{8}v^{4} \right ) \, dx.
\end{align}
A broad survey of PDE and inverse scattering  
methods for  the Cauchy problem of the Benjamin-Ono equation and Intermediate Long Wave equations can be found  in Chapter 3 of Klein-Saut \cite{KS21}.
\newline \\

The question of well-posedness of the Cauchy problem has been studied extensively for progressively rougher classes of initial data in $H^{s}(\R)$. 
Local wellposedness of solutions in $H^s$, 
$s>\frac{3}{2}$ was obtained by  Iorio \cite{Ior86}, extended to  global  solutions in $s \geq \frac{3}{2}$ by Ponce \cite{Pon91}. Using dispersive estimates, Koch and Tzvetkov \cite{KT03} extended the local well-posedness to 
 $s>\frac{5}{4}$ and Kenig \cite{KK03} further improved it to  $s>\frac{9}{8}$.
By performing a gauge transformation to eliminate high-order derivatives in the non-linear term and applying  Strichartz estimates, Tao \cite{T04} further shows global  wellposedness in $H^s$, $s\ge 1$, where the global result is due to  the invariant at the level of $H^{1}$. Later, Burq and Planchon established well-posedness locally for $s>\frac{1}{4}$ and globally for $s>\frac{1}{2}$ \cite{BP05}. Lastly, global well-posedness was established for $s \geq 0$ by  Ionescu and Kenig \cite{IK07} and Molinet and Pilod \cite{MP12a}.

\bigskip
Analysis of the  higher order BO equation  brings serious difficulties.  Linares, Pilod and Ponce \cite{LPP11} proved local well-posedness for $H^{2}$ initial data of the Cauchy problem for the following higher-order BO equation
\begin{align}
\partial_{t}v - b \mathcal{H}(v_{xx}) + a v_{xxx} = cvv_{x} - d\left (v\mathcal{H}(v_{x}) + \mathcal{H}(vv_{x}) \right )_{x} \, ; v(0,x)=v_{0}(x).
\end{align}
For lack of conserved quantities at the level of $H^{2}$, a global result was not established. However, a little later, Molinet and Pilod   \cite{MP12b} proved  global well-posedness  for $H^{1}$ initial data. The main effort was to prove existence locally in $H^{1}$ and then they  used the conserved quantity at the level of $H^{1}$ to obtain global well-posedness. 

\bigskip
Turning to coupled systems, the BO-Schr\"odinger  system
\begin{align}
\begin{cases}
&i\partial_{t}u + \partial_{x}^{2}u = \alpha uv \\
&\partial_{t}v +\nu \partial_{x}(|D_{x}|v) = \beta \partial_{x}(|u|^{2})
\end{cases}
\end{align}
was derived by Funakoshi and Oikawa \cite {FO83} in a regime where the internal wave has a smaller amplitude than the surface one. In particular, the BO equation does not have nonlinear terms.

This system has the following three conservation laws:
\begin{subequations}
\begin{align}
 &   M= \int_\R |u(x,t)|^2 dx \\
 &   L= -\frac{\alpha}{2\beta} \int_\R |v(x,t)|^2 dx -\Im \int_\R u\overline{u}_{x} dx \\
 &   H= \int_\R  \Big(\big(|u_x|^2 -\frac{\alpha \nu}{2\beta} ||D_x|^{1/2}v|^{2} + \alpha v |u|^{2} \Big) dx.
\end{align}
\end{subequations}
In the context of several classes of systems (KdV-NLS, BO-NLS), Bekiranov, Ogawa and Ponce \cite{BOP98} proved local existence in $H^s\times H^{s-1/2} $, $\nu\ne 1$, $s\ge0$. Because of conservation laws, the solutions persist globally in time when $s\geq 1$, $ \alpha \nu/\beta<0$.
Pecher \cite{P06} extended  the global  result to   $|\nu|=1, \alpha/\beta<0$, and  $s>1/3$. 
Angulo, Matheus and Pilod \cite{AMP09} further improved global well-posedness to $s=0$,
$\nu\ne0, |\nu| \ne 1$.
Finally, we mention  a recent article of Linares, Mendez and  Pilod  \cite{LMP23} who included nonlinear terms in the Schr\"odinger and BO equations  and proved local existence of solutions
in $H^{s+1/2}\times H^{s}$, $s> 5/4$.

Our  system \eqref{benjamin-ono}-\eqref{schrodinger} derived in the previous section combines two difficult issues, high order terms in both equations  and coupling. For this reason,
we will restrict ourselves to a simplified version of it, where we neglect the last two terms of each equation,  namely we will consider the system 
\begin{equation}
\label{reduced-system}
\left\{ \begin{aligned}
& \partial_t r + a \partial_x^3 r - b \H \partial_x^2 r = c r \partial_x r - d \partial_x \left( r \H \partial_x r + \H (r \partial_x r) \right) + \beta \partial_x \left( |q|^2 \right), \\
& i \partial_t q -\alpha \partial_x^2 q = - \beta qr,\\
&r(x, 0) = r_0(x), \quad q(x,0) = q_0(x),
\end{aligned} \right.
\end{equation}
where $x, t \in \mathbb{R}$, $v$ is a real-valued function, $a$ is a nonzero real number, $b, c, d, \f,\alpha, \beta$ are positive real numbers. 
Solutions  of \eqref{reduced-system} conserve the Hamiltonian (energy)
\begin{equation*}
\label{energy}
\begin{aligned} 
E_1 &: = \int_{\R} \left( - \frac{b}{2} r |D_{x}|r  - \frac{a}{2} |\partial_{x} r|^2 - \alpha |\partial_{x} q|^2 -\frac{c}{6} r^{3} - \beta
r |q|^{2} + \frac{d}{2} r^{2} |D_{x}|r \right) dx
\end{aligned}
\end{equation*}
and the $L^2$-norm of the solution $q$,
\begin{equation*}
E_2 := \int_\mathbb{R} |q|^2 dx . 
\end{equation*}
The $L^2$-norm of $r$, however, is not preserved. Instead, we have the conservation of the quantity
\begin{equation*}
E_3 := \frac{1}{2} \int_\mathbb{R} r^2 dx +  \Im \int_\mathbb{R} q  \overline{\partial_{x} q} dx. 
\end{equation*}

Our main  well-posedness result for the system \eqref{reduced-system} is the following: 
\begin{theorem}
\label{well-posedness-theorem}
Let 
\begin{equation}
\label{space-X}
X := \left( H^2(\mathbb{R}) \cap L^2(\mathbb{R}; x^2 dx) \right) \times \left( H^3(\mathbb{R}) \cap L^2(\mathbb{R}; x^2 dx) \right).
\end{equation}
For every $(r_0, q_0) \in X$, there exists a positive time $T = T(\|(r_0, q_0)\|_{X})$
for which the initial value problem \eqref{reduced-system} admits a unique solution $(r,q) \in X$ 
satisfying
\begin{equation}
\label{solution-properties}
\begin{aligned}
(r,q) & \in C \left([0, T]: X \right), \\
\partial_x^l r & \in L^6\left( [0, T]: L_x^\infty (\mathbb{R}) \right) \quad \text{for  } l=0,1,2,\\
\partial_x^3 r & \in L^2\left( [0, T]: L_{loc}^2 (\mathbb{R}) \right), \\
\partial_x (xr) & \in L^2\left( [0, T]: L_{loc}^2 (\mathbb{R}) \right). 
\end{aligned}
\end{equation}
\end{theorem}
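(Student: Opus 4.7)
The approach I would take is to adapt the gauge transformation strategy of Linares-Pilod-Ponce \cite{LPP11} to the full coupled system, treating the Schr\"odinger equation as a perturbation and handling the weighted $L^2(x^2\, dx)$ components via commutator identities with the linear propagators. The principal difficulty is the nonlinear term $-d\partial_x(r \H \partial_x r + \H(r\partial_x r))$ in the BO equation, which effectively loses two derivatives and cannot be closed by a naive contraction in $H^2$. Following \cite{LPP11}, I would introduce a gauge variable $w = e^{-i\psi(r)} P_{+\text{high}}\, r + P_{\text{low}}\, r$, where $\psi(r)$ is a real phase built from an antiderivative of a frequency-localized piece of $r$ and the projections are Littlewood-Paley cutoffs to low and positive-high frequencies. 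The phase is chosen so that, after differentiating $w$ in time and using the equation for $r$, the leading cubic-in-frequency contribution from the $d$ nonlinearity is cancelled by $-i\psi_t w$, leaving a first-order nonlinearity plus commutators controllable by the Kato smoothing norm.

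Next I would set up a resolution space $Y_T \subset C([0,T]; X)$ carrying simultaneously the $L^\infty_t H^2_x$ norm of $r$, the $L^\infty_t H^3_x$ norm of $q$, the weighted norms $\|xr\|_{L^\infty_t L^2_x}$ and $\|xq\|_{L^\infty_t L^2_x}$, the maximal function norms $\|\partial_x^l r\|_{L^6_t L^\infty_x}$ for $l=0,1,2$, the local Kato smoothing norms $\|\partial_x^3 r\|_{L^2_t L^2_{x,\text{loc}}}$ and $\|\partial_x(xr)\|_{L^2_t L^2_{x,\text{loc}}}$, together with analogous smoothing/Strichartz norms for $q$ driven by the Schr\"odinger group $e^{i\alpha t \partial_x^2}$. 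These are precisely the norms produced by the linear estimates for the BO propagator $e^{-t(a\partial_x^3 - b\H \partial_x^2)}$ and the Schr\"odinger propagator; the extra $b\H \partial_x^2$ only perturbs the BO symbol at high frequency and does not spoil the dispersive smoothing, maximal-function, and Strichartz estimates inherited from the Airy group.

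With the linear estimates in hand, I would close the nonlinear estimates via Duhamel's formula on $[0,T]$. In the gauged BO equation, the surviving first-order nonlinearity $c r\partial_x r$ and the commutator residues of the gauge are handled by $\|\partial_x r\|_{L^\infty_t L^2_x}\|r\|_{L^6_t L^\infty_x}$-type bounds, while the coupling $\beta \partial_x(|q|^2)$ is absorbed using $q \in H^3$ and the algebra property. The Schr\"odinger nonlinearity $-\beta q r$ is controlled in $H^3$ by the algebra property and, on the weighted component, by $\|xqr\|_{L^2} \le \|xq\|_{L^2}\|r\|_{L^\infty} + \|q\|_{L^\infty}\|xr\|_{L^2}$. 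The weighted norms evolve by commuting $x$ with the linear generators: $[x, a\partial_x^3 - b\H\partial_x^2]$ and $[x, i\alpha\partial_x^2]$ produce lower-order terms already controlled by the smoothing ingredients of $Y_T$. A standard Banach fixed-point argument on a small ball of $Y_T$, with $T = T(\|(r_0, q_0)\|_X)$ chosen small enough, then yields existence, uniqueness, continuous dependence, and persistence of the $X$-regularity, which together imply \eqref{solution-properties}.

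The main obstacle I expect is the compatibility of the gauge with the coupled, weighted setting. Since $\psi(r)$ is nonlocal and built from an antiderivative of a frequency-localized piece of $r$, bounding it in $L^\infty$ and showing that the map $r \mapsto w$ is bi-Lipschitz on $Y_T$ requires spatial decay of $r$, which is exactly what the assumption $r_0 \in L^2(x^2\, dx)$ supplies. A secondary subtlety is that, in the gauged equation, the factor $\psi_t$ introduces the full right-hand side of the BO equation into the effective source for $w$, including the coupling term $\beta \partial_x(|q|^2)$; tracking this contribution simultaneously through the weighted norm on $q$ is where the argument is most delicate, and is the reason both $r_0$ and $q_0$ are required to lie in $L^2(x^2\, dx)$.
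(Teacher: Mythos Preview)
Your overall strategy---gauge away the worst nonlinear term following \cite{LPP11}, close in a space carrying Strichartz, Kato smoothing, maximal, and weighted norms, and run a contraction---is exactly the paper's plan. But the specific gauge you describe is not the one the paper (or \cite{LPP11}) uses, and the distinction matters for the higher-order nonlinearity.

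You propose a Tao-style gauge $w = e^{-i\psi(r)} P_{+\text{high}}\, r + P_{\text{low}}\, r$ built on a Littlewood--Paley low/high split of $r$ itself. The paper instead first differentiates the BO equation once and projects onto the full positive and negative frequency half-lines via $P_\pm = \tfrac{1}{2}(1\mp i\H)$; on each piece the Hilbert transform becomes a scalar $\mp i$, so the worst term is isolated as $\pm 2id\, r\, \partial_x^3 P_\pm r$ plus a commutator $[P_\pm, r]\partial_x^3 P_\pm r$. The gauge $\Psi_\pm = \exp\!\bigl(\mp \tfrac{2id}{3a}\int_{-\infty}^x r\bigr)$ is then multiplied onto $\partial_x P_\pm r$, producing the new unknowns $w_\pm = \Psi_\pm \partial_x P_\pm r$, and the phase is chosen precisely so that $3a(\partial_x\Psi_\pm)\partial_x^3 P_\pm r$ cancels the bad term. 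What survives are first-order expressions in $(r, w_\pm)$ and the commutator $[P_\pm, r]\partial_x^2(\Psi_\mp w_\pm)$, which is handled by the Calder\'on-type estimate $\|\partial_x^l[P_\pm,h]\partial_x^m f\|_{L^p}\lesssim \|\partial_x^{l+m}h\|_{L^\infty}\|f\|_{L^p}$. The contraction is then run on the enlarged tuple $(r,w_+,w_-,q)$ in a space $Y_T$ built from the six seminorms $\lambda_1^T,\dots,\lambda_6^T$ for $r$, four for $w_\pm$, and $H^3$ and weighted norms for $q$.

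The Tao gauge you sketch is tailored to the classical BO nonlinearity $vv_x$; for the second-order $d$-term here it is not clear your cancellation ``leading cubic-in-frequency contribution \dots\ cancelled by $-i\psi_t w$'' goes through without first differentiating and using $P_\pm$ to scalarize $\H$. The paper's route avoids this by working at the level of $\partial_x r$ from the start. Your identification of the role of the weighted hypothesis---needed to make the antiderivative in the gauge bounded---is correct and matches the paper; and the remaining ingredients you list (commuting $x$ with the linear generators, algebra estimates for the coupling terms, smoothing for $\partial_x^3 r$) are all present there as well.
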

We point out that Theorem \ref{well-posedness-theorem} implies the boundedness of the weighted $L^2_x$-norms of $r$ and $q$, which are necessary to control the gauge transformation \eqref{gauge-function}. The additional properties of $r$ in \eqref{solution-properties} appear as a result of boundedness of the semi-norms \eqref{lambda-seminorms} used in the proof.

\subsection{Strategy of the proof} 

The proof is based on a standard contraction mapping principle for appropriately chosen norms, which we will discuss in more details below. 
The main difficulty in the analysis of \eqref{reduced-system} is the presence of second-order derivatives of $r$  in the nonlinear part of the Benjamin-Ono  equation. Such terms cannot be treated using smoothing estimates as in Lemma
\ref{lemma-semigroup-estimates} since the second inequality in \eqref{estimates-BO-operators} can handle only up to one derivative. 
To overcome this difficulty,  we introduce pseudo-differential operators $P_+$ and $P_-$, which correspond to projections into positive and negative frequencies, respectively: 
$$
\widehat{P_\pm f} (k) = {1}_{\mathbb{R}_\pm} \hat f(k).
$$
Then, the relations
\begin{equation}
\label{projections-relations}
\H = -i (P_+ - P_-) \quad \textrm{and} \quad 1 = P_+ + P_-. 
\end{equation}
allow to apply $P_\pm$ to the BO equation, 
see \eqref{equation-for-dxPv-plus} and \eqref{equation-for-dxPv-minus}. The high-derivative nonlinear terms of these equations consist of terms $r \partial_x^3 P_\pm r$ and terms involving the commutator $[P_\pm, r]$. The most problematic out of these two is $r \partial_x^3 P_\pm r$, since the other term can be treated using a commutator estimate from Lemma \ref{lemma-commutator} proved in Dawson, McGahagan and Ponce \cite{DMP07}. To remove the problematic term, we use the idea of a gauge transformation, initially introduced by Hayashi and Ozawa in \cite{HO94} in the context of the NLS equation and  applied by Tao in \cite{T04} and Linares, Pilod and Ponce in \cite{LPP11} for BO equation. Our gauge transformation function $\Psi$, defined in \eqref{gauge-function}, is constructed as in \cite{LPP11}. 
As a result of these algebraic steps, we rewrite the BO equation as a system of three nonlinear PDEs for $r$ and $w_\pm := \Psi_\pm \partial_x P_\pm r$, see \eqref{final-system}. 

The proof of Theorem \ref{well-posedness-theorem} now follows from solving the system \eqref{final-system} coupled with the Schr\"odinger equation in \eqref{reduced-system}  and  using a fixed point argument. Namely, we intend to solve the system
\begin{equation}
\label{reduced-system-extended}
\left\{ \begin{aligned}
\left( \partial_t + a \partial_x^3 - b \H  \partial_x^2 \right) r & =\mathcal{N}(r, w_\pm) + \f \partial_x \left(|q|^2 \right),\\
(\partial_t + a \partial_x^3 + ib \partial_x^2) w_+ & = \mathcal{M}_+ (r, w_\pm) + 2id \Psi_+ [P_+, r] \partial_x^2 (\Psi_-w_+) + \f \Psi_+ \partial_x^2 P_+ \left( |q|^2 \right),\\
(\partial_t + a \partial_x^3 - ib \partial_x^2) w_- & = \mathcal{M}_- (r, w_\pm) - 2id \Psi_- [P_-, r] \partial_x^2 (\Psi_+ w_-) + \f \Psi_- \partial_x^2 P_- \left( |q|^2 \right), \\
i \partial_t q -\alpha \partial_x^2 q & = - \beta qr,\\
r(x, 0) = r_0(x), \quad & q(x,0) = q_0(x), \quad w_\pm (x, 0) = w_{\pm, 0} : = \Psi_\pm \partial_x P_\pm r_0. 
\end{aligned} \right.
\end{equation}

\begin{proposition}
\label{well-posedness-proposition}
Let 
\begin{equation*}
Y := \left( H^1(\mathbb{R}) \cap L^2(\mathbb{R}; x^2 dx) \right) \times H^1(\mathbb{R})^2 \times \left( H^3(\mathbb{R}) \cap L^2(\mathbb{R}; x^2 dx) \right).
\end{equation*}
For every $(r_0, w_{\pm, 0}, q_0) \in Y$, there exists $T = T(\|(r_0, w_{\pm, 0},  q_0)\|_{Y})$ 
for which the initial value problem \eqref{reduced-system-extended} admits a unique solution $(r, w_\pm, q) \in Y$ satisfying
\begin{equation*}
\label{extended-solution-properties}
\begin{aligned}
(r,w_\pm, q) & \in C \left([0, T]: Y \right), \\
\partial_x^l r, \partial_x^l w_\pm & \in L^6\left( [0, T]: L_x^\infty (\mathbb{R}) \right) \quad \text{for  } l=0,1,\\
\partial_x^2 r, \partial_x^2 w_\pm  & \in L^2\left( [0, T]: L_{loc}^2 (\mathbb{R}) \right), \\
\partial_x (xr) & \in L^2\left( [0, T]: L_{loc}^2 (\mathbb{R}) \right). 
\end{aligned}
\end{equation*}
\end{proposition}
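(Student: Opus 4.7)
The plan is to prove Proposition \ref{well-posedness-proposition} by a Banach fixed-point argument applied to the Duhamel reformulation of the system \eqref{reduced-system-extended}. Denote by $U(t)$ the unitary group generated by $-a\partial_x^3 + b\H\partial_x^2$, by $U_\pm(t)$ the groups generated by $-a\partial_x^3 \mp ib\partial_x^2$, and by $S(t) = e^{i\alpha t \partial_x^2}$ the free Schrödinger group. Each of the four equations is then rewritten as
\begin{equation*}
r(t) = U(t)r_0 + \int_0^t U(t-s)\bigl[\mathcal{N}(r, w_\pm) + \f\partial_x(|q|^2)\bigr](s)\, ds,
\end{equation*}
and analogously for $w_\pm$ and $q$, which defines a nonlinear map $\Phi = (\Phi_r, \Phi_{w_+}, \Phi_{w_-}, \Phi_q)$.

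First I would define a time-dependent metric space $Y_T$ carrying all the properties listed in \eqref{extended-solution-properties}: the $L^\infty_t$ control in $H^1 \cap L^2(x^2\,dx)$ for $r$ and $q$ (and in $H^1$ for $w_\pm$), the Strichartz-type norms $\|\partial_x^l r\|_{L^6_t L^\infty_x}$ and $\|\partial_x^l w_\pm\|_{L^6_t L^\infty_x}$ for $l=0,1$, the local smoothing norms $\|\partial_x^2 r\|_{L^2_t L^2_{loc}}$, $\|\partial_x^2 w_\pm\|_{L^2_t L^2_{loc}}$, and the weighted local smoothing norm $\|\partial_x(xr)\|_{L^2_t L^2_{loc}}$. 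The work is performed on a closed ball $B_R \subset Y_T$ of radius $R$ proportional to $\|(r_0, w_{\pm,0}, q_0)\|_Y$. The linear estimates of Lemma \ref{lemma-semigroup-estimates} immediately bound the free-evolution contributions $U(t)r_0$, $U_\pm(t)w_{\pm,0}$, $S(t)q_0$ in $Y_T$ by $C\|(\cdot)_0\|_Y$.

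The substance of the proof lies in two nonlinear bounds of the form
\begin{equation*}
\|\Phi(r, w_\pm, q)\|_{Y_T} \le C\|(r_0, w_{\pm,0}, q_0)\|_Y + CT^\theta R^k,
\qquad
\|\Phi(\mathbf{z}_1) - \Phi(\mathbf{z}_2)\|_{Y_T} \le CT^\theta R^{k-1} \|\mathbf{z}_1 - \mathbf{z}_2\|_{Y_T},
\end{equation*}
for some $\theta > 0$ and $k\ge 2$. To achieve these, the key ingredients are (i) the smoothing and Strichartz estimates of Lemma \ref{lemma-semigroup-estimates} applied to $U$ and $U_\pm$, which trade one spatial derivative for a local-in-space $L^2_t$ gain; (ii) the commutator estimate of Lemma \ref{lemma-commutator} of Dawson, McGahagan, Ponce for the expression $\Psi_\pm[P_\pm, r]\partial_x^2(\Psi_\mp w_\pm)$, which reduces its effective derivative count by one; (iii) standard Sobolev product and chain rule estimates in $H^s$ for the coupling terms $\beta qr$, $\f\partial_x(|q|^2)$ and $\f\Psi_\pm \partial_x^2 P_\pm(|q|^2)$, using the $H^3$-regularity of $q$ to absorb two derivatives; and (iv) weighted smoothing estimates for the $L^2(x^2\, dx)$ component, obtained by multiplying the $r$-equation by $x$ and observing that $[x, a\partial_x^3 - b\H\partial_x^2] = 3a\partial_x^2 - 2b\H\partial_x$ is of lower order, so that $\|xr\|_{L^\infty_t L^2_x}$ and $\|\partial_x(xr)\|_{L^2_t L^2_{loc}}$ are controlled in terms of the other norms in $Y_T$ and $\|xr_0\|_{L^2}$.

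The principal obstacle will be the terms $\mathcal{M}_\pm(r, w_\pm)$ and the commutator $\Psi_\pm[P_\pm, r]\partial_x^2(\Psi_\mp w_\mp)$: although the worst derivative contribution $r\partial_x^3 P_\pm r$ has been eliminated by the very construction of the gauge function $\Psi_\pm$ following Linares--Pilod--Ponce, verifying that all the residual terms in $\mathcal{M}_\pm$ really do contain at most $\partial_x^2 r$ or $\partial_x^2 w_\pm$ linearly (and thus can be absorbed by the local smoothing norm) requires a careful algebraic bookkeeping of the Leibniz rule applied to $\Psi_\pm \partial_x P_\pm$ acting on the nonlinear part of the $r$-equation. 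Once this cancellation is in place, the gauge factor $\Psi_\pm$ and its derivatives are pointwise controlled via $\|r\|_{L^\infty_t(H^1\cap L^2(x^2 dx))}$, all nonlinear estimates close with a positive power of $T$, and choosing $T = T(\|(r_0, w_{\pm,0}, q_0)\|_Y)$ sufficiently small yields a contraction on $B_R$. Uniqueness in $Y_T$ follows directly from the contraction estimate, and persistence of regularity together with continuous dependence on the initial data is obtained in the standard way by reapplying the nonlinear bounds.
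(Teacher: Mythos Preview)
Your proposal is correct and follows essentially the same approach as the paper: a contraction-mapping argument on a space $Y_T$ built from the $H^1$, weighted $L^2(x^2\,dx)$, Strichartz $L^6_tL^\infty_x$, and local-smoothing norms, with the linear estimates of Lemma~\ref{lemma-semigroup-estimates}, the commutator bound of Lemma~\ref{lemma-commutator}, and the commutation identity of Lemma~\ref{lemma-commutativity} as the main tools. The only minor slips are notational (the commutator term is $\Psi_\pm[P_\pm,r]\partial_x^2(\Psi_\mp w_\pm)$, not $w_\mp$, and $q$ is controlled in $H^3$ rather than $H^1$); otherwise your outline matches the paper's proof in Section~\ref{section-fixed-point}.
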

From the definition \eqref{w-plus-defn}, it is clear that Proposition \ref{well-posedness-proposition} implies Theorem \ref{well-posedness-theorem}. Indeed, one can relate the functions $r$ and $w_\pm$ via
\begin{equation*}
\partial_x r = \Psi_- w_+ + \Psi_+ w_-. 
\end{equation*}
In Section \ref{section-fixed-point}, we provide the proof of Proposition \ref{well-posedness-proposition}, which follows from the fixed point argument applied to the integral form of the equations \eqref{reduced-system-extended}. The integral forms are given in \eqref{integral-system}--\eqref{integral-system-weighted}. The semi-norms we use are similar to those in \cite{LPP11}; they are given in \eqref{lambda-seminorms}--\eqref{mu-seminorms}
with an additional semi-norm needed to get the $H_x^3$-norm estimates for the solution $q$ of the Schr\"odinger equation. 
We point out that 
the norms $\lambda_3^T$ and $\lambda_4^T$ involving the cut-off function \eqref{cut-off-definition} are needed to control the terms involving $\partial_x^2 v$ and $\partial_x^2 w_\pm$. The example of such treatment is given in \eqref{second-term-lemma-nonl}, where we face $\partial_x^2 v$ appearing from $\left\| \mathcal{N} \right\|_{H_x^1}$ in \eqref{main-bound}.   
In view of \eqref{der-order-reduction-dx2v}, it may be also possible to control $\partial_x^2 v$ by means of $\lambda_1^T(w_\pm)$ or $\lambda_2^T(w_\pm)$ as in \eqref{commutator-lemma-term-estimate}; however, similar ideas seem to fail for $\partial_x^2 w_\pm$. 

\subsection{Preliminary lemmas}

Throughout the rest of the paper, we denote $\langle t \rangle := \sqrt{1+t^2}$. We write $a \lesssim b$ if there is a uniform constant $C$ such that $a \leq Cb$. If $a \lesssim b$ and $b \lesssim a$, we write $a \approx b$.  

We introduce a cut-off function $\chi(x) \in C_{c}^{\infty}(\R)$ such that $0 \leq \chi \leq 1$, $\chi \equiv 1$ on $[0,1]$, $\mathrm{supp} \, \chi \subset (-1,3)$ and set
\begin{equation}
\label{cut-off-definition}
\chi_{j/N}(x) := \chi \left (\frac{x-j}{N} \right )
\end{equation}
for $N \in \mathbb{Z}_+$. Note that, by construction, 
\begin{equation}
\label{sum-of-cut-offs}
\sum_{j=-\infty}^{+\infty} \chi_{j/N}(x) \approx N \quad \text{ for all } x\in \mathbb{R}.     
\end{equation}

The analysis of the equations in \eqref{reduced-system} involves linear operators $(a\partial_x^3 - b \H \partial_x^2)$, $(a\partial_x^3 \pm i b \partial_x^2)$ and $i \alpha \partial_x^2$, so we define the corresponding unitary groups in $H^s(\mathbb{R})$ as follows:
\begin{equation}
\label{semigroups}
V (t) := e^{-t(a\partial_x^3 - b \H \partial_x^2)}, \quad
W_\pm (t) := e^{-t(a\partial_x^3 \pm i b \partial_x^2)} \quad \text{and} \quad U(t) := e^{-i t \alpha \partial_x^2}.
\end{equation}
\begin{lemma} { \rm{(Strichartz estimates)}  }
\label{lemma-semigroup-estimates} 
Let $S$ be one of the following semi-groups: $V$ or $W_\pm$. Then, for every $T>0$ and every function $h$ of sufficient regularity in $x$, we have 
\begin{equation}
\label{estimates-BO-operators}
\begin{aligned}
\left\| S(t) h \right\|_{L_t^6 L_x^\infty} & \lesssim \|h\|_{L_x^2}, \\
\left( \sup\limits_{j \in \mathbb{Z}} \int_0^T \int_{\mathbb{R}} \left| \chi_{j/N} (x) \partial_x S(t) h(x) \right|^2 dx dt \right)^{1/2} & \lesssim \langle T \rangle^{1/2} \|h\|_{L_x^2},\\
\left( \sum\limits_{j \in \mathbb{Z}} \sup\limits_{0\leq t \leq T} \sup\limits_{x \in \mathbb{R}} \left| \chi_{j/N} (x) S(t) h(x) \right|^2 \right)^{1/2} & \lesssim  \langle T \rangle^2 \|h\|_{H_x^1},
\end{aligned}
\end{equation}
where $\chi_{j/N}$ is given by \eqref{cut-off-definition} and $N = N(T) \approx 1+T$. 

Moreover, for the semi-group $U(t)$ we have 
\begin{equation}
\label{estimates-Schr-operator}
\begin{aligned}
\left( \sum\limits_{j \in \mathbb{Z}} \sup\limits_{0\leq t \leq T} \sup\limits_{x \in \mathbb{R}} \left| \chi_{j/N} (x) U(t) h(x) \right|^2 \right)^{1/2} & \lesssim  \langle T \rangle^2 \|h\|_{H_x^1}.
\end{aligned}
\end{equation}
\end{lemma}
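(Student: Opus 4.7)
The plan is to deduce each of the three estimates from oscillatory integral analysis of the Fourier symbols of the propagators, following closely the program of Kenig--Ponce--Vega for KdV-type operators as adapted in \cite{LPP11}. First I would write each propagator as a Fourier multiplier, $(S(t)h)(x)=(2\pi)^{-1}\int e^{i(xk+t\phi(k))}\hat h(k)\,dk$, with phases $\phi_V(k)=ak^{3}+bk|k|$, $\phi_{W_\pm}(k)=ak^{3}\pm bk^{2}$, $\phi_U(k)=\alpha k^{2}$. The key structural facts are $\phi_V''(k)=6ak+2b\,\mathrm{sgn}(k)$, $\phi_{W_\pm}''(k)=6ak\pm 2b$, both linear in $k$ away from a single isolated zero of size $O(1)$, and $\phi_V'''=\phi_{W_\pm}'''=6a\neq 0$ everywhere. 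Thus at high frequencies $V$ and $W_\pm$ behave like the Airy group $e^{-ta\partial_x^{3}}$ while at low frequencies they are a bounded perturbation of a Schr\"odinger-type multiplier.

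For the $L^{6}_{t}L^{\infty}_{x}$ estimate I would perform a Littlewood--Paley decomposition $\mathrm{Id}=\sum_{N\geq 0}P_{N}$. On each dyadic block $|k|\sim N\geq 1$ where $\phi''$ is nondegenerate of size $N$, a stationary phase / van der Corput argument yields $\|P_{N}S(t)h\|_{L^{\infty}_{x}}\lesssim(|t|N)^{-1/2}\|P_{N}h\|_{L^{1}}$; combining with the trivial $L^{2}\to L^{2}$ bound and a standard $TT^{*}$ duality argument gives the full $L^{6}_{t}L^{\infty}_{x}$ bound on $P_{N}S(t)h$ with constants summable in $N$, exactly as in the KdV case. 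The low-frequency block $P_{0}$ is harmless since its Fourier support is compact, so Bernstein turns $L^\infty_x$ control into $L^2_x$ control.

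For the local smoothing estimate I would use the Plancherel/change-of-variable strategy of Kato and Kenig--Ponce--Vega: on each branch of the phase where $\phi''\neq 0$ substitute $\eta=\phi(k)$, so that $\|\chi_{j/N}\partial_{x}S(t)h\|_{L^{2}_{t,x}}^{2}=\int_{\mathbb{R}^{2}}|\chi_{j/N}(x)|^{2}\frac{|k|^{2}}{|\phi'(k)|}|\hat h(k)|^{2}\,dk\,dx$ after an application of Plancherel in $t$. Since $|\phi'(k)|\gtrsim k^{2}$ for large $k$, the weight gives the one-derivative gain; the $j$-sup is finite with constant $\lesssim N^{1/2}\approx\langle T\rangle^{1/2}$ coming from the support length of $\chi_{j/N}$. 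The branch containing the single zero of $\phi''$ is handled by cutting out a neighborhood of that zero, on which $\phi'''=6a$ gives Airy-type decay stronger than what is needed.

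The maximal function bound with cutoffs $\chi_{j/N}$ is the Kenig--Ponce--Vega maximal estimate for dispersive phases: integration by parts in $k$ against $e^{i(xk+t\phi(k))}$, using $\phi''$-non\-de\-gen\-er\-a\-cy and $H^{1}$ control of $\hat h$, gives a uniform bound $\sup_{0\le t\le T}\sup_{x\in[j,j+N]}|S(t)h(x)|\lesssim\langle T\rangle^{2}\|h\|_{H^{1}_{x}}$ block by block; square-summing in $j$ converges because only finitely many blocks are relevant for any $h\in H^{1}$ after a trivial Sobolev embedding argument. For the Schr\"odinger group $U(t)$ this is the classical maximal estimate of Kenig--Ruiz/Vega and is easier. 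The principal obstacle in executing the program is handling the isolated point where $\phi''$ vanishes uniformly in the constants $a,b$: there one loses the $t^{-1/2}$ decay and must pass to $t^{-1/3}$ Airy decay using $\phi'''\neq 0$, combined with a smooth partition of unity separating the nondegenerate and degenerate regimes; this is standard but must be tracked carefully to obtain the stated $\langle T\rangle$ dependence.
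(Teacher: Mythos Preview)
The paper does not actually prove this lemma; it simply cites Lemma~2.1 of \cite{LPP11} for the estimates on $V$ and $W_\pm$ and Theorem~3.1 of \cite{KPV93} for the Schr\"odinger maximal bound. Your proposal is essentially a reconstruction of the arguments behind those references, and your identification of the phases and their structural properties ($\phi'''\equiv 6a\ne 0$, isolated zeros of $\phi''$) is correct. Your treatment of the $L^6_tL^\infty_x$ Strichartz bound and of the local smoothing estimate is an accurate sketch of the standard Kenig--Ponce--Vega program that underlies those citations.

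The one place your sketch goes wrong is in the maximal function bound. You write that ``square-summing in $j$ converges because only finitely many blocks are relevant for any $h\in H^{1}$ after a trivial Sobolev embedding argument.'' This is not correct: a generic $h\in H^1$ is not compactly supported, and Sobolev embedding $H^1\hookrightarrow L^\infty$ gives only a uniform bound on each block, not $\ell^2_j$-summability over infinitely many blocks. The genuine content of the Kenig--Ponce--Vega maximal estimate is precisely that the oscillatory integral structure of the propagator produces enough decay in the block index $j$ to make $\sum_j \sup_{t,x}|\chi_{j/N}S(t)h|^2$ converge; this requires the more delicate analysis in \cite{KPV93} combining a frequency decomposition with sharp pointwise bounds on the fundamental solution. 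Since the paper only cites the literature at this point, your overall approach is aligned with it, but the justification you offer for the $\ell^2_j$ summability would not close as written.
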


\begin{proof}
For the proof of \eqref{estimates-BO-operators}, we refer to Lemma 2.1 in \cite{LPP11} and references therein. For the proof of \eqref{estimates-Schr-operator}, we refer to Theorem 3.1 in \cite{KPV93} and references therein.
\end{proof}


The commutators involving the two projection operators
\begin{equation} \label{commutator}
[P_\pm, h] := P_\pm h - h P_\pm
\end{equation}
satisfy the following estimate proven in \cite{DMP07}.
\begin{lemma} {\rm{(Commutator lemma)}  }
\label{lemma-commutator} 
Let $h$ be a function of sufficient regularity, 
then for any $p>1$ and nonnegative integers $l, m$ with $l + m \geq 1$, there exists a constant $c = c(p,l,m)$ such that 
\begin{equation}
\label{commutator-estimate}
\left\| \partial_x^l [P_\pm, h] \partial_x^m f \right\|_{L_x^p} \leq c \| \partial_x^{(l+m)} h \|_{L_x^\infty} \|f\|_{L_x^p}.
\end{equation}
\end{lemma}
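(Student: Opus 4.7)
The plan is to reduce everything to a classical Hilbert-transform commutator estimate from harmonic analysis. By \eqref{projections-relations}, $P_\pm = \frac{1}{2}(I \mp i\H)$, so $[P_\pm, h] = \mp \frac{i}{2}[\H, h]$ and the statement is equivalent to
\begin{equation*}
\|\partial_x^l [\H, h] \partial_x^m f\|_{L_x^p} \lesssim \|\partial_x^{l+m} h\|_{L_x^\infty} \|f\|_{L_x^p}, \qquad l+m \geq 1, \; 1 < p < \infty.
\end{equation*}

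The next step is to redistribute the derivatives using the basic identity
\begin{equation*}
\partial_x [\H, h] = [\H, h'] + [\H, h]\,\partial_x,
\end{equation*}
which follows immediately from applying the product rule to $\H(hf)$ and $h\H f$ and using that $\H$ commutes with $\partial_x$. A straightforward induction on $l$ gives
\begin{equation*}
\partial_x^l [\H, h]\, \partial_x^m = \sum_{j=0}^{l} \binom{l}{j} [\H, h^{(j)}]\, \partial_x^{l+m-j}.
\end{equation*}
This reduces the desired estimate to bounds of the form $\|[\H, g]\partial_x^k f\|_{L_x^p} \lesssim \|g^{(k)}\|_{L_x^\infty}\|f\|_{L_x^p}$ for $k \geq 0$, applied with $g = h^{(j)}$ and $k = l+m-j$; each such term then contributes exactly $\|h^{(l+m)}\|_{L_x^\infty}\|f\|_{L_x^p}$, and the assumption $l+m \geq 1$ guarantees that the corner case $(j,k) = (l,0)$ with $l = 0$ does not arise.

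For the base cases, the bound with $k=0$ is immediate from $L^p$-boundedness of $\H$ and H\"older's inequality. For $k=1$, one uses the integral representation
\begin{equation*}
[\H, g] \partial_x f(x) = \frac{1}{\pi}\,\mathrm{p.v.}\int \frac{g(y) - g(x)}{x-y}\, \partial_y f(y)\, dy
\end{equation*}
and, after a single integration by parts, recognizes a Calder\'on-Zygmund operator whose kernel size and regularity are controlled by $\|g'\|_{L_x^\infty}$; its $L^p$-boundedness is Calder\'on's first commutator theorem. For $k \geq 2$, one integrates by parts $k$ times and expands $g(y)-g(x)$ by Taylor's theorem at $y = x$; the polynomial part is annihilated by the $k$-th derivative and the integral remainder produces a higher-order Calder\'on commutator whose $L^p$-boundedness, with operator norm $\lesssim \|g^{(k)}\|_{L_x^\infty}$, is the classical Calder\'on-Coifman-McIntosh-Meyer theorem.

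The main obstacle is precisely this harmonic-analytic input: the $L^p$-boundedness of the higher-order Calder\'on commutators is a deep result that I would quote rather than reprove, citing \cite{DMP07} and the classical literature. Once these base estimates are in place, Step 2 is pure bookkeeping, and combined with Step 1 it yields the stated bound.
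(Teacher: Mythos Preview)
The paper does not prove this lemma; it simply records the estimate and cites \cite{DMP07}. Your outline is a correct sketch of how the result is actually established there: reduce $[P_\pm,h]$ to $[\H,h]$ via \eqref{projections-relations}, redistribute derivatives with the Leibniz-type identity $\partial_x[\H,h]=[\H,h']+[\H,h]\partial_x$, and then appeal to the Calder\'on commutator theorems for the resulting building blocks $[\H,g]\partial_x^k$. Since the paper treats the lemma as a black-box citation, there is nothing further to compare.
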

\noindent We will also need the following result, which shows the commutativity property of a space variable $x$ with a linear operator associated with $V$ defined in \eqref{semigroups}. 
\begin{lemma}  {\rm(Commutativity relation)}
\label{lemma-commutativity}
For all $h \in \mathcal{S} (\mathbb{R})$, we have
\begin{equation}
\begin{aligned}
\label{commutativity-relation}
(a \partial_x^3 - b \H \partial_x^2) (xh) = (3a\partial_x^2 - 2b\H \partial_x) h + x (a \partial_x^3 - b \H \partial_x^2) h.
\end{aligned}
\end{equation}
\end{lemma}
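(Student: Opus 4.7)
The plan is to split the operator $(a\partial_x^3 - b\H\partial_x^2)$ acting on $xh$ into its two pieces, apply the Leibniz rule to the derivative part, and then separately handle the commutator between $\H$ and the multiplication operator $x$. For the derivative part, iterated Leibniz gives $\partial_x^2(xh) = x\partial_x^2 h + 2\partial_x h$ and $\partial_x^3(xh) = x\partial_x^3 h + 3\partial_x^2 h$, so
\begin{equation*}
a\partial_x^3(xh) = 3a\partial_x^2 h + a\,x\partial_x^3 h,
\end{equation*}
which already contributes the $3a\partial_x^2 h$ term and the expected $x(a\partial_x^3 h)$ term.

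For the Hilbert transform piece, I would first compute the commutator $[\H,x]$ acting on a Schwartz function $f$. Writing the principal value directly,
\begin{equation*}
\H(xf)(x) = \mathrm{p.v.}\frac{1}{\pi}\int_{\mathbb{R}}\frac{yf(y)}{x-y}\,dy
= x\,\H f(x) - \frac{1}{\pi}\int_{\mathbb{R}} f(y)\,dy,
\end{equation*}
where the second equality follows by the algebraic identity $y = x - (x-y)$ inside the integral. Hence $[\H,x]f = -\pi^{-1}\int_{\mathbb{R}} f$. Applying this with $f = \partial_x^2 h$ for $h\in\mathcal{S}(\mathbb{R})$, the integral $\int_{\mathbb{R}}\partial_x^2 h\,dy$ vanishes, so $\H(x\,\partial_x^2 h) = x\,\H\partial_x^2 h$. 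Combining with $\partial_x^2(xh) = x\partial_x^2 h + 2\partial_x h$ yields
\begin{equation*}
-b\,\H\partial_x^2(xh) = -2b\,\H\partial_x h - b\,x\,\H\partial_x^2 h.
\end{equation*}

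Adding the two contributions gives exactly $(3a\partial_x^2 - 2b\H\partial_x)h + x(a\partial_x^3 - b\H\partial_x^2)h$, as claimed. The only conceptual step here is the identification $[\H,x]f = -\pi^{-1}\int f$, and the observation that the relevant argument $\partial_x^2 h$ has zero integral, so it is essentially a bookkeeping lemma rather than a hard obstacle. One can equivalently derive the commutator on the Fourier side using that $\H$ has symbol $-i\,\mathrm{sgn}(k)$ and $x \leftrightarrow i\partial_k$, which produces a distribution supported at $k=0$ that annihilates $\widehat{\partial_x^2 h}$ since $\widehat{\partial_x^2 h}(0)=0$; I would mention this as an alternative derivation but carry out the proof with the principal-value formulation since it keeps everything in physical space.
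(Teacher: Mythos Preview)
Your proof is correct and arrives at the same identity, but it proceeds differently from the paper. The paper treats the $\partial_x^3$ part as ``straightforward'' (Leibniz, as you do) and then handles the full operator $\H\partial_x^2$ acting on $xh$ in a single stroke on the Fourier side: writing $\widehat{xh}=i\partial_k\hat h$, it integrates by parts against the symbol $ik^2\sgn(k)=i|k|k$ and reads off $\H\partial_x^2(xh)=2\H\partial_x h + x\H\partial_x^2 h$ directly. You instead first apply Leibniz to $\partial_x^2(xh)$ and then isolate the commutator $[\H,x]$ in physical space via the principal-value kernel, observing that the resulting constant $-\pi^{-1}\int f$ vanishes when $f=\partial_x^2 h$. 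Your route is more modular and stays entirely in physical space, making the role of the zero-mean condition on $\partial_x^2 h$ transparent; the paper's route is slightly quicker since it avoids splitting the operator and the boundary term from integration by parts is absorbed automatically (the symbol $|k|k$ vanishes at $k=0$, so no $\delta$-contribution arises). You even sketch the Fourier alternative yourself, so you have essentially covered both arguments.
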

\begin{proof}
The computations for $\partial_x^3$ part are straightforward. For $\H \partial_x^2$, we derive the estimates using the Fourier space. 
Indeed, using the integration by parts, we can write 
$$
\begin{aligned}
\H \partial_x^2 (xh) &= \int i k^2 \sgn (k) ~\widehat{xh}(k) e^{ikx} dk\\
& = \int \partial_k \left( |k|k e^{ikx} \right) \widehat{h} (k) dk\\
& = \int \left( 2k \sgn(k) + ix |k| k \right) e^{ikx} \widehat{h} ~dk = 2 \partial_x \H h + x \partial_x^2 \H h. 
\end{aligned}
$$
\end{proof}

\subsection{Reformulation of the Benjamin-Ono equation} 

The goal of this section is to rewrite the Benjamin-Ono equation as a system of three partial differential equations for dependent variables $r$ and $w^{\pm}$, where $w^{\pm}$ are defined in terms of $\partial_{x}r$. Ultimately, the non-linear terms in the system for $(v,w^{\pm})$ only involve first-order derivatives as shown in \eqref{equation-for-wplus}, \eqref{der-order-reduction-dx2v} and \eqref{equation-for-wminus}.
\newline \\
\noindent Applying the derivative $\partial_x$ to the first equation in \eqref{reduced-system}, we get
\begin{equation}
\label{equation-dxv}
\begin{aligned}
\left( \partial_t + a \partial_x^3 - b \H \partial_x^2 \right) \partial_x r = &~c \left( (\partial_x r)^2 + r \partial_x^2 r \right) - d \left( (\partial_x^2 r) (\H \partial_x r) + 2 (\partial_x r) (\H \partial_x^2 r) \right)\\
& - 3 d \H \left( (\partial_x^2 r) (\partial_x r) \right) - d \left( r (\H \partial_x^3 r) + \H (r \partial_x^3 r) \right) + \f \partial_x^2 \left( |q|^2 \right)
\end{aligned}
\end{equation}
The Hilbert transform operator in the above equation can be expressed using the relation \eqref{projections-relations}, and projecting the whole equation onto $P_+$, we get 
\begin{equation}
\label{equation-for-dxPv-plus}
\left( \partial_t + a \partial_x^3 + i b ~ \partial_x^2 \right) \partial_x P_+ r = P_+ Q_+ (r) + 2id r~ \partial_x^3 P_+ r + 2id [P_+, r] \partial_x^3 P_+ r + \f \partial_x^2 P_+ \left( |q|^2 \right),
\end{equation}
where $[P_+, r]$ is the commutator defined in \eqref{commutator} and 
\begin{equation*}
Q_+(r) := c \left(  (\partial_x r)^2 + v \partial_x^2 v \right) - d \left( (\partial_x^2 r) (\H \partial_x r) + 2 (\partial_x r) (\H \partial_x^2 r) \right) + 3id (\partial_x r) (\partial_x^2 r).
\end{equation*}
Projecting \eqref{equation-dxv} onto $P_-$, we get the equation for $\partial_x P_- v$, which can be obtained from \eqref{equation-for-dxPv-plus} by taking its complex conjugate,
\begin{equation}
\label{equation-for-dxPv-minus}
\left( \partial_t + a \partial_x^3 - i b ~ \partial_x^2 \right) \partial_x P_- r = P_- \overline{Q_+} (r) - 2id r~ \partial_x^3 P_- r - 2id [P_-, r] \partial_x^3 P_- r + \f \partial_x^2 P_- \left( |q|^2 \right).
\end{equation}
To eliminate the higher-order derivative term $v\partial_x^3 P_+ r$ in \eqref{equation-for-dxPv-plus}, we multiply the equation by a function $\Psi = \Psi (x, t)$, which will be chosen later. Then using the identities 
\begin{equation*}
\begin{aligned}
\Psi \partial_t \partial_x P_+ r & = \partial_t (\Psi \partial_x P_+ r) - (\partial_t \Psi) (\partial_x P_+ r),\\
a \Psi \partial_x^3 \partial_x P_+ r & = a \partial_x^3 (\Psi \partial_x P_+ r) - 3a (\partial_x \Psi) (\partial_x^3 P_+ r) - 3a (\partial_x^2 \Psi) (\partial_x^2 P_+ r) - a (\partial_x^3 \Psi) (\partial_x P_+ r),\\
ib \Psi \partial_x^2 \partial_x P_+ r & = i b \partial_x^2 (\Psi \partial_x P_+ r) - 2i b (\partial_x \Psi) (\partial_x^2 P_+ r) - ib (\partial_x^2 \Psi) (\partial_x P_+ r),
\end{aligned}
\end{equation*}
we get 
\begin{equation}
\label{equation-for-PsidxPv}
\begin{aligned}
(\partial_t + a \partial_x^3 + ib \partial_x^2) \Psi \partial_x P_+ r =&~ (\partial_t \Psi) (\partial_x P_+ r) + 3a (\partial_x \Psi) (\partial_x^3 P_+ r) + 3a (\partial_x^2 \Psi) (\partial_x^2 P_+ r) \\
& + a (\partial_x^3 \Psi) (\partial_x P_+ r) + 2i b (\partial_x \Psi) (\partial_x^2 P_+ r) + ib (\partial_x^2 \Psi) (\partial_x P_+ r)\\
& + \Psi P_+ Q_+ (r) + 2id \Psi v~ \partial_x^3 P_+ v + 2id \Psi [P_+, r] \partial_x^3 P_+ r \\
& + \f \Psi \partial_x^2 P_+ \left( |q|^2 \right). 
\end{aligned}
\end{equation}
The higher-order derivative terms involving $\partial_x^3 P_+ r$ will vanish if we choose a function $\Psi$ that satisfies  
\begin{equation}
\label{gauge-function}
3a \partial_x \Psi + 2id \Psi r = 0 \implies 
\Psi (x,t) = e^{-\frac{2id}{3a} \int_{-\infty}^x r(y,t) dy}.
\end{equation}
The term $\partial_t \Psi$ in \eqref{equation-for-PsidxPv} can be found from \eqref{gauge-function} as 
\begin{equation*}
\begin{aligned}
\partial_t \Psi & = - \frac{2id}{3a} \Psi \int_{-\infty}^x r_t dy\\
& = - \frac{2id}{3a} \Psi \left( -a \partial_x^2 r + b\H \partial_x r + \frac{c}{2} r^2 - dr (\H \partial_x r) - d \H (r\partial_x r) \right),
\end{aligned}
\end{equation*}
where we used \eqref{reduced-system} to define $r_t$. 

Setting $\Psi_{+} = \Psi$ and $\Psi_{-} = \overline{\Psi}$, we define the function on left-hand side of \eqref{equation-for-PsidxPv} as 
\begin{equation}
\label{w-plus-defn}
w_+ := \Psi_+ \partial_x P_+ r.
\end{equation}
We also define $w_- :=  \Psi_- \partial_x P_- r  =\overline{w}_+ $  
whose evolution equation is given by the complex conjugate of \eqref{equation-for-PsidxPv}. 
Under the condition \eqref{gauge-function}, the equation \eqref{equation-for-PsidxPv} becomes 
\begin{subequations}
\begin{align}
(\partial_t + a \partial_x^3 + ib \partial_x^2) w_+ & = - \frac{2id}{3a} w_+ \left( -a \partial_x^2 v + b\H \partial_x r + \frac{c}{2} r^2 - dr (\H \partial_x r) - d \H (r\partial_x r) \right) \nonumber \\
& ~+ 3a (\partial_x^2 \Psi_+) (\partial_x^2 P_+ r) + a (\partial_x^3 \Psi_+) (\partial_x P_+ r) + 2i b (\partial_x \Psi_+) (\partial_x^2 P_+ r) \nonumber \\
& ~ + ib (\partial_x^2 \Psi_+) (\partial_x P_+ r) + \Psi_+ P_+ Q_+ (r) + 2id \Psi_+ [P_+, r] \partial_x^3 P_+ r \nonumber \\
& ~ + \f \Psi_+ \partial_x^2 P_+ \left( |q|^2 \right) \\
& =: \mathcal{M}_+ (r, w_\pm) + 2id \Psi_+ [P_+, r] \partial_x^2 (\Psi_-w_+) + \f \Psi_+ \partial_x^2 P_+ \left( |q|^2 \right) \label{equation-for-wplus},
\end{align}
\end{subequations}
 where we write $\partial_x^3 P_+ r = \partial_x^2 (\Psi_-w_+)$ and $\mathcal{M}_{+}$ denotes the sum of all terms in \eqref{equation-for-wplus} except the last two. Note that, based on the definitions of $w_\pm$ and $\Psi$ in \eqref{gauge-function}, the nonlinear function $\mathcal{M}_+$ can be expressed as a polynomial involving terms in $r$ and $w_\pm$ up to their first derivatives only.
Indeed, the second order derivative terms appearing in \eqref{equation-for-wplus} are treated similar to the following
\begin{equation}
\label{der-order-reduction-dx2v}
\begin{aligned}
& \partial_x^2 r = \partial_x \left( (P_+ + P_-) \partial_x r \right) = \partial_x \left(\Psi_- \Psi_+ \partial_x P_+ r + \Psi_+ \Psi_- \partial_x P_- r \right) = \partial_x \left(\Psi_- w_+ + \Psi_+ w_- \right),
\end{aligned}
\end{equation}
while for the terms involving $\partial_x^3 \Psi$ we use \eqref{gauge-function} as well. 
Taking the complex conjugate of \eqref{equation-for-wplus}, we get an equation for $w_-$ given by
\begin{equation}
\label{equation-for-wminus}
(\partial_t + a \partial_x^3 - ib \partial_x^2) w_- = \mathcal{M}_- (r, w_\pm) - 2id \Psi_- [P_-, r] \partial_x^2 (\Psi_+ w_-) + \f \Psi_- \partial_x^2 P_- \left( |q|^2 \right),
\end{equation}
where $\mathcal{M}_- = \overline{\mathcal{M}_+}$.
The equation \eqref{equation-for-wplus} is now ready for further analysis. 

We also rewrite the original Benjamin-Ono equation in \eqref{reduced-system} using the newly defined function $w_\pm$. We note that its higher-order derivative terms in the nonlinear part can also be simplified as in \eqref{der-order-reduction-dx2v} using identities  \eqref{projections-relations} and \eqref{gauge-function}. For example, 
\begin{equation*}
\begin{aligned}
\partial_x (r \H \partial_x r ) = -i \partial_x \left( r \Psi_+ \Psi_- (P_+ - P_-) \partial_x r \right) = -i \partial_x \left( r (\Psi_- w_+ - \Psi_+ w_-) \right).
\end{aligned}
\end{equation*}
Repeating similar manipulations for the rest of higher-order nonlinear terms, the equation for $v$ in \eqref{reduced-system} becomes 
\begin{equation}
\label{equation-for-v}
\begin{aligned}
\left( \partial_t + a \partial_x^3 - b \H  \partial_x^2 \right) r & = c v \partial_x r + id \partial_x(r\Psi_-w_+ - v\Psi_+ w_-) - d \H \partial_x(r\Psi_-w_+ + r\Psi_+ w_-)\\
& \quad + \f \partial_x \left(|q|^2 \right)\\
& := \mathcal{N}(r, w_\pm) + \f \partial_x \left(|q|^2 \right),
\end{aligned}
\end{equation}
where $\mathcal{N}$ is a polynomial involving terms in $v$ and $w_\pm$ up to their first derivatives only.
As a result of the above steps, the Benjamin-Ono equation in \eqref{reduced-system} is equivalent to the following system of equation 
\begin{equation}
\label{final-system}
\left\{
\begin{aligned}
\left( \partial_t + a \partial_x^3 - b \H  \partial_x^2 \right) r & =\mathcal{N}(r, w_\pm) + \f \partial_x \left(|q|^2 \right),\\
(\partial_t + a \partial_x^3 + ib \partial_x^2) w_+ & = \mathcal{M}_+ (r, w_\pm) + 2id \Psi_+ [P_+, r] \partial_x^2 (\Psi_-w_+) + \f \Psi_+ \partial_x^2 P_+ \left( |q|^2 \right),\\
(\partial_t + a \partial_x^3 - ib \partial_x^2) w_- & = \mathcal{M}_- (r, w_\pm) - 2id \Psi_- [P_-, r] \partial_x^2 (\Psi_+ w_-) + \f \Psi_- \partial_x^2 P_- \left( |q|^2 \right)
\end{aligned}\right.
\end{equation}

Finally, we write the evolution equation for $xr$ and $xu$, which will be used later to get estimates of $v$ and $u$ in the weighted space $L^2(\mathbb{R}; x^2 dx)$.
We multiply \eqref{equation-for-v} by $x$, and using \eqref{commutativity-relation}, we can write
\begin{equation*}
\begin{aligned}
\left( \partial_t + a\partial_x^3 - b \H \partial_x^2 \right) (xr) = (3a \partial_x^2 - 2 b\H \partial_x) r + x\mathcal{N}(r, w_\pm) + \beta x \partial_x\left( |q|^2 \right). 
\end{aligned}
\end{equation*}
Similarly, we obtain 
\begin{equation*}
\begin{aligned}
\left( \partial_t + i \alpha \partial_x^2 \right) (xq) = 2i \alpha \partial_x q + i\beta xqr. 
\end{aligned}
\end{equation*}


\section{Proof of Proposition \ref{well-posedness-proposition} 
}
\label{section-fixed-point}
Equations for $r, w_\pm$ and $q$  are written in Duhamel  form as 
\begin{subequations}
\label{integral-system}
\begin{align}
&r(t) = V(t) r_0 + \int\limits_0^t V(t-s) \Big( \mathcal{N}(r, w_\pm) + \f \partial_x \left( |q|^2 \right) \Big) ds \\
& w_\pm (t) = W_\pm (t) w_{\pm, 0} + \int\limits_0^t W_\pm(t-s) \Big( \mathcal{M}_{\pm} \pm 2id\Psi_\pm [P_\pm, r] \partial_x^2 (\Psi_\mp w_\pm) + \f \Psi_\pm \partial_x^2 P_\pm \left( |q|^2 \right) \Big) ds \\
&q(t) = U(t) q_0 + i \beta \int\limits_0^t U(t-s) (qr) ds,
\end{align}
\end{subequations}
where $w_{\pm, 0} := \Psi_\pm \partial_x P_\pm r_0$, 
while the integral equations for the weighted functions are 
\begin{subequations}
\label{integral-system-weighted}
\begin{align}
&x r(t) = V(t) (x r_0) + \int\limits_0^t V(t-s) \Big( (3a\partial_x^2 - 2b \H \partial_x) r+ x \mathcal{N}(r, w_\pm) + \f x \partial_x \left( |q|^2 \right) \Big) ds \\
&x q(t) = U(t) (xq_0) + i \int\limits_0^t U(t-s) (2 \alpha \partial_x q +\beta x qr) ds.
\end{align}
\end{subequations}


\subsection{Estimates on semi-norms}
We introduce the following semi-norms: for any $T>0$ let
\begin{equation}
\label{lambda-seminorms}
\begin{aligned}
&\lambda_1^T (h) = \sup\limits_{0\leq t\leq T} \left\| h(x,t) \right\|_{H_x^1}\, ; \\
&\lambda_2^T (h) = \left \| h(x,t) \right \|_{L_t^6(0, T) L_x^\infty} + \left \| \partial_{x}h(x,t) \right \|_{L_t^6(0, T) L_x^\infty} \, ; \\
&\lambda_3^T (h) = \langle T \rangle^{-1/2}
\left( \sup\limits_{j \in \mathbb{Z}} \int\limits_0^T \int\limits_{\mathbb{R}} \left|\chi_{j/N}(x) \partial_x^2 h(x,t) \right|^2 dx~dt \right)^{1/2}\, ; \\
&\lambda_4^T (h) = \langle T \rangle^{-2}
\left( \sum\limits_{j \in \mathbb{Z}} \sup\limits_{0\leq t \leq T} \sup\limits_{ x \in \mathbb{R}} \left|\chi_{j/N}(x) h(x,t) \right|^2  \right)^{1/2}\, ; \\
& \lambda_5^T (h) = \sup\limits_{0\leq t\leq T} \left\| x h(t) \right\|_{L_x^2}\, ; \\
&\lambda_6^T (h) = \langle T \rangle^{-1/2} \left( \sup\limits_{j \in \mathbb{Z}} \int\limits_0^T \int\limits_{\mathbb{R}} \left|\chi_{j/N} (x) \partial_x \left( x h(x,t) \right) \right|^2 dx~dt \right)^{1/2}\, ;
\end{aligned}
\end{equation}
and
\begin{equation}
\label{mu-seminorms}
\begin{aligned}
&\mu_1^T (h) = \sup\limits_{0\leq t\leq T} \left\| h(t) \right\|_{H_x^3}.\\
\end{aligned}
\end{equation}

Let $Y$ and $Y_T$ be spaces defined as 
\begin{equation*}
Y := \left( H^1(\mathbb{R}) \cap L^2(\mathbb{R}; x^2 dx) \right) \times H^1(\mathbb{R})^2 \times 
\left( H^3(\mathbb{R}) \cap L^2(\mathbb{R}; x^2 dx) \right),
\end{equation*}
and
\begin{equation*}
Y_T := \left\{ (r, w_\pm, q) \in C\big( [0, T]; Y \big) :~ \|(r, w_\pm, q)\|_{Y_T} < \infty \right\},
\end{equation*}
where 
\begin{equation*}
\|(r, w_\pm,q )\|_{Y} := \|r\|_{H_x^1} + \|x r\|_{L_x^2} +
\|w_+\|_{H_x^1} + \|w_-\|_{H_x^1} + \|q\|_{H_x^3} + \|x q\|_{L_x^2}.
\end{equation*}
and
\begin{equation}
\label{seminorms-in-Y_T}
\|(r, w_\pm, q)\|_{Y_T} := \sum_{l=1}^6 \lambda_l^T(r) + \sum_{l=1}^4 \lambda_l^T(w_\pm) + \mu_1^T(q) + \lambda_4^T(q) + \lambda_5^T(q).
\end{equation}

\begin{lemma}
\label{seminorm-bound}
The following estimate holds for the semi-norms in \eqref{lambda-seminorms}--\eqref{mu-seminorms}:
{\small{
\begin{equation}
\label{main-bound}
\begin{aligned}
\|(r, w_\pm, q)\|_{Y_T} &\lesssim  ~  \langle T \rangle \|(r_0, w_{\pm,0}, q_0)\|_{Y} +  \int_0^T \Big( \langle T \rangle \|\mathcal{N}(s)\|_{H_x^1} + \langle T \rangle \|\mathcal{M}_\pm(s)\|_{H_x^1}+ \|x \mathcal{N}(s)\|_{L_x^2}
\Big) ds \qquad\\
& + \int_0^T \Big( \langle T \rangle \|qr(s)\|_{H_x^3}
+ \langle T \rangle \|q^2(s)\|_{H_x^2} + \|x \partial_x (|q|^2) \|_{L_x^2} + \|x qr \|_{L_x^2}
\Big) ds\\
& + \int_0^T \langle T \rangle \Big( \|\Psi_\pm [P_\pm, r] \partial_x^2 (\Psi_\mp w_\pm)\|_{H_x^1} + \|\Psi_\pm \partial_x^2 P_\pm (|q|^2) (s) \|_{H_x^1} \Big) ds \\
& + T \|(r, w_\pm, q)\|_{Y_T}^2 
\end{aligned}
\end{equation}
}}
\end{lemma}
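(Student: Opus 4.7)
The estimate \eqref{main-bound} is established by evaluating each semi-norm in \eqref{lambda-seminorms}--\eqref{mu-seminorms} on the Duhamel representations \eqref{integral-system}--\eqref{integral-system-weighted}, splitting every contribution into the free evolution of the initial data and the Duhamel integral of the forcing. To the forcing integral we apply Minkowski's integral inequality so that the semi-norm passes inside the time integral, reducing the task to bounding $\|S(t-s)F(s)\|_{\mathrm{seminorm}}$ with $s$ fixed; this is precisely the setting of Lemma \ref{lemma-semigroup-estimates}.

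For the free-evolution pieces, the unitarity of $V$, $W_\pm$, $U$ on the appropriate $H^s_x$ handles $\lambda_1^T$ and $\mu_1^T$; the three inequalities in \eqref{estimates-BO-operators} supply the bounds for $\lambda_2^T$, $\lambda_3^T$, $\lambda_4^T$ applied to $r$ and $w_\pm$; and \eqref{estimates-Schr-operator} does the same for $\lambda_4^T(q)$. The resulting powers $\langle T\rangle^\alpha$ with $\alpha\in\{0,1/2,2\}$ are all dominated by the single $\langle T\rangle$ multiplying $\|(r_0,w_{\pm,0},q_0)\|_Y$ on the first line of \eqref{main-bound}. The forcing pieces, treated by the same set of Strichartz-type estimates, produce directly the integrals of $\|\mathcal{N}\|_{H^1_x}$, $\|\mathcal{M}_\pm\|_{H^1_x}$, $\|qr\|_{H^3_x}$, $\|q^2\|_{H^2_x}$, $\|\Psi_\pm[P_\pm,r]\partial_x^2(\Psi_\mp w_\pm)\|_{H^1_x}$ and $\|\Psi_\pm \partial_x^2 P_\pm (|q|^2)\|_{H^1_x}$ appearing on the right-hand side of \eqref{main-bound}; note that $\|\beta\partial_x(|q|^2)\|_{H^1_x}\lesssim \|q^2\|_{H^2_x}$ supplies the $q^2$-term.

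The subtle point lies in the three weighted semi-norms $\lambda_5^T(r)$, $\lambda_6^T(r)$, $\lambda_5^T(q)$. Their Duhamel formulas \eqref{integral-system-weighted} are obtained via the commutativity relation of Lemma \ref{lemma-commutativity}, and their forcing contains, besides the nonlinear terms $x\mathcal{N}$, $x\partial_x(|q|^2)$ and $xqr$ that are kept unexpanded in \eqref{main-bound}, the additional linear contributions $(3a\partial_x^2 - 2b\mathcal{H}\partial_x) r$ and $2\alpha\partial_x q$. The first-derivative pieces $\mathcal{H}\partial_x r$ and $\partial_x q$ are absorbed directly into $T\lambda_1^T(r)$ and $T\mu_1^T(q)$. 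For the missing second-derivative piece $\partial_x^2 r$, we invoke the gauge identity \eqref{gauge-function} and the relation $\partial_x r = \Psi_- w_+ + \Psi_+ w_-$ to rewrite $\partial_x^2 r$ as a sum of $\partial_x w_\pm$ and a bilinear product of $r$ with $w_\pm$. The linear part contributes $T\lambda_1^T(w_\pm)$, while the bilinear part is controlled by Sobolev embedding $H^1_x\hookrightarrow L^\infty_x$ and H\"older in time,
\begin{equation*}
\int_0^T \|r\,w_\pm\|_{L^2_x}\, ds \,\le\, T\, \lambda_1^T(r)\, \lambda_1^T(w_\pm) \,\le\, T\, \|(r,w_\pm,q)\|_{Y_T}^2,
\end{equation*}
generating the quadratic remainder $T\|(r,w_\pm,q)\|_{Y_T}^2$ on the final line of \eqref{main-bound}.

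The principal obstacle is exactly the weighted smoothing norm $\lambda_6^T(r)$: applying the local smoothing estimate of \eqref{estimates-BO-operators} to the weighted Duhamel integral forces an $L^1_t L^2_x$ control of $\partial_x^2 r$, a quantity not bounded by any of the semi-norms in \eqref{lambda-seminorms}. The gauge identity \eqref{gauge-function} is what bypasses this difficulty, trading one derivative on $r$ for a bilinear product in $r$ and $w_\pm$ at the price of the quadratic term above; once this step is in place, the remaining semi-norms require only routine Strichartz-and-H\"older bookkeeping.
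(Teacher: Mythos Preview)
Your proposal is correct and follows essentially the same route as the paper: split each semi-norm on the Duhamel formula into free and forcing parts, apply Lemma~\ref{lemma-semigroup-estimates}, and for the weighted semi-norms $\lambda_5^T(r)$, $\lambda_6^T(r)$ use the gauge identity $\partial_x r = \Psi_- w_+ + \Psi_+ w_-$ to convert the troublesome $\partial_x^2 r$ into $H^1_x$-controlled pieces of $w_\pm$ plus a bilinear remainder that produces the $T\|(r,w_\pm,q)\|_{Y_T}^2$ term.

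Two small points of bookkeeping to tighten. First, the sentence ``the resulting powers $\langle T\rangle^\alpha$ with $\alpha\in\{0,1/2,2\}$ are all dominated by the single $\langle T\rangle$'' is literally false for $\alpha=2$; what actually happens is that the prefactors $\langle T\rangle^{-1/2}$ and $\langle T\rangle^{-2}$ built into the definitions of $\lambda_3^T$ and $\lambda_4^T$ cancel exactly against the $\langle T\rangle^{1/2}$ and $\langle T\rangle^{2}$ appearing in \eqref{estimates-BO-operators}, so the free-evolution contribution is simply $\lesssim \|(r_0,w_{\pm,0},q_0)\|_Y$ with no growth factor. Second, the intermediate linear quantities $T\lambda_1^T(r)$, $T\lambda_1^T(w_\pm)$, $T\mu_1^T(q)$ that you produce do not appear on the right of \eqref{main-bound}; as in the paper you must substitute back into them the already-obtained Duhamel bounds for $\lambda_1^T$ and $\mu_1^T$, which yields terms of the form $T\|(r_0,w_{\pm,0},q_0)\|_Y$ and $T\int_0^T(\cdots)\,ds$, all absorbed by the displayed $\langle T\rangle$-weighted integrals. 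With these two clarifications the argument is complete and coincides with the paper's.
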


\begin{proof}
We show the steps to derive the bounds for the semi-norms $\lambda_1^T(r), \dots, \lambda_5^T(r)$. The bounds for the rest of the semi-norms in \eqref{seminorms-in-Y_T} are obtained in a similar fashion. 

Using the integral equation for $v$ in \eqref{integral-system} and Minkowski inequality we get
\begin{equation}
\label{lambda-1-bound}
\begin{aligned}
\lambda_1^T(r) &\lesssim \left\|V(t) r_0\right\|_{L_t^\infty (0, T) H_x^1} + \left\|\int_0^t V(t-s) \left(\mathcal{N} (s) + \partial_x(|q^2|) \right) ds \right\|_{L_t^\infty (0, T) H_x^1} \\
& \lesssim \|t_0\|_{H_x^1} + \int_0^T \|\mathcal{N}(s)\|_{H_x^1} ds + \int_0^T \|q^2(s)\|_{H_x^2} ds.
\end{aligned}
\end{equation}
Next, for $l =0, 1$ using Minkowski inequality and Lemma \ref{lemma-semigroup-estimates}, we have
\begin{equation*}
\begin{aligned}
\|\partial_x^l r\|_{L_t^6(0,T) L_x^\infty} & \lesssim \|V(t) \partial_x^l r_0\|_{L_t^6(0,T) L_x^\infty} + \left\|\int_0^t V(t-s) \left(\partial_x^l \mathcal{N} (s) + \partial_x^{l+1}(|q|^2) \right) ds \right\|_{L_t^6 (0, T) L_x^\infty}\\
& \lesssim \| \partial_x^l r_0\|_{L_x^2} + \left\|\int_0^T \|V(t-s) \left(\partial_x^l \mathcal{N} (s) + \partial_x^{l+1}(|q|^2) \right) \|_{L_x^\infty} ds \right\|_{L_t^6 (0, T)}\\
& \lesssim \| r_0\|_{H_x^2} + \int_0^T \|V(t-s) \left(\partial_x^l \mathcal{N} (s) + \partial_x^{l+1}(|q|^2) \right) \|_{L_t^6(0, T) L_x^\infty} ds \\
& \lesssim \| r_0\|_{H_x^2} + \int_0^T \Big( \|\mathcal{N} (s)\|_{H_x^1} + \| q^2 \|_{H_x^2} \Big) ds,
\end{aligned}
\end{equation*}
and this gives the bound for $\lambda_2^T(r)$.

Similarly, for $\lambda_3^T(r)$ we get \begin{equation*}
\begin{aligned}
\langle T \rangle^{1/2} \lambda_3^T(r) &\lesssim  \left( \sup\limits_{j \in \mathbb{Z}} \int\limits_0^T \int\limits_{\mathbb{R}} \left|\chi_j(x) \partial_x^2 V(t) r_0(x,t) \right|^2 dx~dt \right)^{1/2} \\
& \qquad + \left( \sup\limits_{j \in \mathbb{Z}} \int\limits_0^T \int\limits_{\mathbb{R}} \left| \int\limits_0^t  \chi_j(x) \partial_x^2 V(t-s) \Big( \mathcal{N}(r, w_\pm) + \f \partial_x \left( |q|^2 \right) \Big) ds \right|^2 dx~dt \right)^{1/2}\\[3pt]
&:= I_1 + I_2.
\end{aligned}
\end{equation*}
The first term $I_1$ can be easily bounded using the corresponding linear estimate in \eqref{estimates-BO-operators} as 
\begin{equation*}
\begin{aligned}
I_1 \lesssim \langle T \rangle^{1/2} \|r_0\|_{H_x^1}.
\end{aligned}
\end{equation*}
For the second term we use Minkowski inequality for $L_x^2 L_s^1$ to write
\begin{equation*}
\begin{aligned}
&\left(  \int\limits_{\mathbb{R}} \left| \int\limits_0^t  \Big| \chi_j(x) \partial_x^2 V(t-s) \Big( \mathcal{N}(r, w_\pm) + \f \partial_x \left( |q|^2 \right) \Big) \Big| ds \right|^2 dx \right)^{1/2}\\
& \qquad \lesssim \int\limits_0^t  \left( \int\limits_\mathbb{R} \Big|  \chi_j(x) \partial_x^2 V(t-s) \Big( \mathcal{N}(r, w_\pm) + \f \partial_x \left( |q|^2 \right) \Big) \Big|^2 dx \right)^{1/2} ds,
\end{aligned}
\end{equation*}
and then we get
\begin{equation*}
\begin{aligned}
I_2 & \lesssim \left( \sup\limits_{j \in \mathbb{Z}} \int\limits_0^T \left( \int\limits_0^T  \left( \int\limits_\mathbb{R} \Big|  \chi_j(x) \partial_x^2 V(t-s) \Big( \mathcal{N}(r, w_\pm) + \f \partial_x \left( |q|^2 \right) \Big) \Big|^2 dx \right)^{1/2} ds \right)^2 dt \right)^{1/2} \\
& \lesssim \left( \sup\limits_{j \in \mathbb{Z}} \left( \int\limits_0^T \left( \int\limits_0^T   \int\limits_\mathbb{R} \Big|  \chi_j(x) \partial_x^2 V(t-s) \Big( \mathcal{N}(r, w_\pm) + \f \partial_x \left( |q|^2 \right) \Big) \Big|^2 dx~ dt \right)^{1/2} ds \right)^2 \right)^{1/2}\\
& \lesssim \int\limits_0^T \left( \sup\limits_{j \in \mathbb{Z}}   \int\limits_0^T   \int\limits_\mathbb{R} \Big|  \chi_j(x) \partial_x^2 V(t-s) \Big( \mathcal{N}(r, w_\pm) + \f \partial_x \left( |q|^2 \right) \Big) \Big|^2 dx~ dt  \right)^{1/2} ds \\
& \lesssim \langle T \rangle^{1/2} \int\limits_0^T \left( \|\mathcal{N}\|_{H_x^1} + \|q^2\|_{H_x^2} \right) ds,
\end{aligned}
\end{equation*}
where we used Minkowski inequality again for $L_t^2 L_s^1$ and the linear estimate \eqref{estimates-BO-operators} for the last inequality. The above bounds for $I_1$ and $I_2$ lead to the desired bound for $\lambda_3^T(r)$. The bound for $\lambda_4^T(r)$ is derived similarly to the above estimates for $\lambda_3^T(r)$.

For $\lambda_5^T(r)$, we use the integral equation in \eqref{integral-system-weighted} and the Minkowski inequality to write
\begin{equation}
\label{lambda-5-bound}
\begin{aligned}
\lambda_5^T(r) &\lesssim \left\|x r_0\right\|_{L_x^2} + \int_0^T \left( \|r\|_{H_x^2} + \|x \mathcal{N}(s)\|_{L_x^2} +\|x \partial_x\left( |q|^2 \right)\|_{L_x^2} \right) ds.
\end{aligned}
\end{equation}
To get the appropriate bound for the $\sup$ term, we recall \eqref{gauge-function} and \eqref{der-order-reduction-dx2v}, so that 
$$
\begin{aligned}
\|r\|_{H_x^2} \lesssim \|r\|_{H_x^1} + \|w_+\|_{H_x^1} + \|w_-\|_{H_x^1} + \|r\|_{L_x^\infty} \left( \|w_+\|_{L_x^2}+ \|w_-\|_{L_x^2} \right),
\end{aligned}
$$
which implies 
$$
\sup\limits_{0 \leq s \leq T} \|r\|_{H_x^2} \lesssim
\lambda_1^T(r) + \lambda_1^T(w_\pm) + \|(r, w_\pm, q)\|_{Y_T^2}^2. 
$$
Then, using the bounds for $\lambda_1^T$ terms as in \eqref{lambda-1-bound}, the above estimates give
\begin{equation*}
\begin{aligned}
\lambda_5^T(r) &\lesssim \langle T \rangle \left\| (r_0, w_{\pm, 0}, q_0) \right\|_{Y} + \int_0^T T  \left( \|\mathcal{N}(s)\|_{H_x^1} + \|q^2(s)\|_{H_x^2} + \|\mathcal{M}_\pm\|_{H_x^1} \right) ds \\
& \qquad + \int_0^T T \Big( \|\Psi_\pm [P_\pm, r] \partial_x^2 (\Psi_\mp w_\pm)\|_{H_x^1} + \|\Psi_\pm \partial_x^2 P_\pm (|q|^2) (s) \|_{H_x^1} \Big) ds \\ 
& \qquad +\int_0^T \left( \|x \mathcal{N}(s)\|_{H_x^2} +\|x \partial_x \left( |q|^2 \right) \|_{H_x^3} \right) ds + T \| (r, w_\pm, q) \|_{Y_T^2}^2.
\end{aligned}
\end{equation*}

\end{proof}



\subsection{Estimates on time integrals of the nonlinear terms}
Here we provide the bounds on the time intervals appearing on the right-hand side of \eqref{main-bound}.

\begin{lemma}
\label{lemma-estimate-N-M}
There exist positive $T$-independent constants $\theta_1, \theta_2>0$ and an integer $N\geq 2$ such that, for a polynomial $p(y) := y^2 + y^3 + \dots + y^N$, we have 
\begin{equation}
\label{bound-nonlin-polynom}
\int_0^T \Big( \langle T \rangle \|\mathcal{N}(s)\|_{H_x^1} + \langle T \rangle \|\mathcal{M}_\pm(s)\|_{H_x^1}+ \|x \mathcal{N}(s)\|_{L_x^2} 
\Big) ds \lesssim T^{\theta_1} \langle T \rangle^{\theta_2} p \big( \|(r, w_\pm, q)\|_{Y_T} \big).
\end{equation}
\end{lemma}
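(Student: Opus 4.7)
The plan is to treat the three time integrals in \eqref{bound-nonlin-polynom} term by term, reducing each nonlinear expression to a sum of monomials in $r$, $w_\pm$ and their first derivatives, then applying H\"older's inequality in both space and time to match the available seminorms in $Y_T$. The expressions $\mathcal{N}(r,w_\pm)$ from \eqref{equation-for-v} and $\mathcal{M}_\pm(r,w_\pm)$ from \eqref{equation-for-wplus} are, by construction, polynomials in $r, w_\pm, \Psi_\pm$ and their first derivatives (recall $|\Psi_\pm|\equiv 1$ and $\partial_x\Psi_\pm = \mp\tfrac{2id}{3a}\,r\Psi_\pm$). Thus $\|\mathcal{N}\|_{H^1_x}$ and $\|\mathcal{M}_\pm\|_{H^1_x}$ reduce, after applying Leibniz, to $L^2_x$-norms of products of $r$, $w_\pm$ and up to second derivatives of each. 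I would first eliminate every occurrence of $\partial_x^2 r$ using the key identity \eqref{der-order-reduction-dx2v}, $\partial_x^2 r=\partial_x(\Psi_- w_++\Psi_+ w_-)$, which gives $\partial_x^2 r$ as a polynomial in $r,w_\pm,\partial_x w_\pm$ of order $\le 3$; this is exactly the role played by the gauge transformation.

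Next, for each monomial I would distribute the factors in $L^\infty_x$ and $L^2_x$, always placing the single highest-derivative object in the $L^2_x$ slot. Integrating in $t$ by H\"older, low-order factors get paired with the $L^6_t L^\infty_x$ norm $\lambda_2^T$ and the remaining $L^2_x$ quantity with $L^\infty_t$ or $L^1_t$ (via Cauchy--Schwarz), which produces a positive power $T^{\theta_1}$ from the extra time integration. A typical estimate is
\begin{equation*}
\int_0^T \|r\,\partial_x r\|_{H^1_x}\,ds \;\lesssim\; \int_0^T \|r\|_{L^\infty_x}\|\partial_x^2 r\|_{L^2_x} + \|\partial_x r\|_{L^\infty_x}\|\partial_x r\|_{L^2_x}\,ds,
\end{equation*}
where after \eqref{der-order-reduction-dx2v} the first term becomes $\|r\|_{L^\infty_x}\,(\|r\|_{L^\infty_x}\|w_\pm\|_{L^2_x}+\|\partial_x w_\pm\|_{L^2_x})$, estimated by $T^{5/6}\lambda_2^T(r)\,\lambda_1^T(w_\pm)$ and similar products.

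The main obstacle is the treatment of the terms in $\|\mathcal{M}_\pm\|_{H^1_x}$ that produce $\partial_x^2 w_\pm$ (coming for instance from $\partial_x\bigl((\partial_x^2\Psi_\pm)(\partial_x^2 P_\pm r)\bigr)$, noting $\partial_x P_\pm r=\Psi_\mp w_\pm$). These cannot be reduced through \eqref{der-order-reduction-dx2v}; instead they must be controlled by the localized seminorm $\lambda_3^T(w_\pm)$. To convert the localized bound into a global $L^2_x$ estimate, I would use the resolution of unity $\sum_{j}\chi_{j/N}^2(x)\approx N\approx 1+T$ from \eqref{sum-of-cut-offs}, bounding
\begin{equation*}
\|f\,\partial_x^2 w_\pm\|_{L^2_t L^2_x}^2 \lesssim N^{-1}\sum_j \|\chi_{j/N}f\|_{L^\infty_t L^\infty_x}^2\|\chi_{j/N}\partial_x^2 w_\pm\|_{L^2_t L^2_x}^2,
\end{equation*}
and pairing the $j$-summed $L^\infty_x$ factor with $\lambda_4^T$ applied to the other factors (whence the $\langle T\rangle^2$) while the $\sup_j$ factor is absorbed by $\langle T\rangle^{1/2}\lambda_3^T(w_\pm)$. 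The extra $\langle T\rangle^{5/2}$ that arises is absorbed into $\langle T\rangle^{\theta_2}$.

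Finally, for the weighted term $\|x\mathcal{N}\|_{L^2_x}$ I would assign the factor $x$ to exactly one $r$-factor in each monomial, obtaining $\|xr\|_{L^2_x}$ controlled by $\lambda_5^T(r)$, while the remaining $L^\infty_x$ factors again pair with $\lambda_2^T$ through $L^6_t$. Collecting all monomials, each is bounded by $T^{\alpha_k}\langle T\rangle^{\beta_k}$ times a product of at most $N$ seminorms from $Y_T$, where $N$ is the maximal degree of a monomial appearing after all expansions; taking $\theta_1:=\min_k\alpha_k>0$ and $\theta_2:=\max_k\beta_k$ yields \eqref{bound-nonlin-polynom} with the polynomial $p(y)=y^2+\cdots+y^N$. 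The positivity of $\theta_1$ is what will later allow the contraction argument to close for small $T$.
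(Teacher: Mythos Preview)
Your overall strategy matches the paper's: expand the polynomials $\mathcal N$, $\mathcal M_\pm$ into monomials in $r,w_\pm$ and first derivatives, then distribute H\"older in space and time. Your identification of the $\partial_x^2 w_\pm$ obstruction and its resolution via the localized seminorms $\lambda_3^T$ and $\lambda_4^T$ is exactly right, and your alternative handling of $\partial_x^2 r$ through \eqref{der-order-reduction-dx2v} rather than the localized trick is also valid (the paper itself notes both options work for $\partial_x^2 r$).

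There is, however, a genuine gap in your treatment of the weighted term $\|x\mathcal N\|_{L^2_x}$. Not every monomial in $\mathcal N$ carries an undifferentiated factor of $r$ to which you can attach the weight: after expanding $\partial_x(r\Psi_- w_+)$ one obtains the term $(\partial_x r)\Psi_- w_+$, and $x(\partial_x r)\Psi_- w_+$ cannot be bounded using $\lambda_5^T(r)=\sup_t\|xr\|_{L^2_x}$, since $\|x\partial_x r\|_{L^2_x}$ is not controlled by any of the $Y_T$ seminorms directly. The paper's fix is to write $x\partial_x r=\partial_x(xr)-r$; the piece with $r$ is harmless, while $\|\partial_x(xr)\,\Psi_- w_+\|_{L^1_tL^2_x}$ is estimated by the \emph{same} localized decomposition you already invoked for $\partial_x^2 w_\pm$, now pairing $\lambda_6^T(r)$ with $\lambda_4^T(w_+)$:
\[
\|\partial_x(xr)\,\Psi_- w_+\|_{L^1_tL^2_x}\;\lesssim\;T^{1/2}\langle T\rangle^{3/2}\,\lambda_6^T(r)\,\lambda_4^T(w_+).
\]
This is precisely why the seminorm $\lambda_6^T$ is included in the $Y_T$-norm; your proposal never invokes it. Once you replace the $\lambda_5^T$ argument for this particular monomial (and its Hilbert-transformed analogues) by the $\lambda_6^T$--$\lambda_4^T$ pairing, your proof goes through.
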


\begin{proof}
Recall from the construction that all the terms in the polynomials $\mathcal{N}$ and $\mathcal{M}_\pm$ are written in $v$ and $w_\pm$ up to their first derivatives at most. As a result, without loss of generality, we can consider $(\partial_x r) \Psi_- w_+$ and $(\partial_x \Psi_+) (\partial_x^2 P_+ r)$ as generic terms of $\mathcal{N}$ and $\mathcal{M}_\pm$, respectively. Note that $\partial_x^2 P_+ r = \partial_x (\Psi_- w_+)$, so the generic term of $\mathcal{M}_\pm$ can be written as $(\partial_x \Psi_+) \partial_x (\Psi_- w_+)$.

Then, for the $H_x^1$ norm of the generic term of $\mathcal{N}$ integrated over time, we have
\begin{equation*}
\begin{aligned} 
\|(\partial_x r ) \Psi_- w_+\|_{L_s^1 (0, T) H_x^1} \lesssim \|w_+ \Psi_- \partial_x r\|_{L_s^1 (0, T) L_x^2} + \|\partial_x (w_+ \Psi_- \partial_x r)\|_{L_s^1 (0, T) L_x^2}.
\end{aligned}
\end{equation*}
The first term is bounded using the Sobolev embedding as follows:
\begin{equation}
\label{first-term-lemma-nonl}
\begin{aligned}
\|w_+ \Psi_- \partial_x r\|_{L_s^1 (0, T) L_x^2} & \lesssim \|w_+\|_{L_s^\infty (0, T) L_x^\infty} \|\partial_x r\|_{L_s^\infty (0, T) L_x^2} \|\Psi_-\|_{L_s^1 (0, T) L_x^\infty} \\
& \lesssim T \lambda_1^T(w_+) \lambda_1^T(r) \lesssim T \|(r, w_\pm, q)\|_{Y_T}^2
\end{aligned}
\end{equation}
The estimates for the second term are more involved, and we have 
\begin{equation*}
\begin{aligned}
\|\partial_x (w_+ \Psi_- \partial_x r)\|_{L_s^1 (0, T) L_x^2} \lesssim &~ 
\|(\partial_x w_+) (\Psi_- \partial_x r)\|_{L_s^1 (0, T) L_x^2} + 
\| w_+ (\partial_x \Psi_-)  (\partial_x r)\|_{L_s^1 (0, T) L_x^2} \\
& 
+ \| w_+ \Psi_-  (\partial_x^2 r)\|_{L_s^1 (0, T) L_x^2}.
\end{aligned}
\end{equation*}
We only show how to deal with the last term involving the third derivative, and the rest of the terms are bounded in a similar or easier way, using \eqref{gauge-function} when needed. For the last term, using the Cauchy-Schwartz inequality in $s$, we get
\begin{equation}
\label{second-term-lemma-nonl}
\begin{aligned}
\| w_+ \Psi_-  (\partial_x^2 r)\|_{L_s^1 (0, T) L_x^2} & \lesssim T^{1/2} \left( \int_0^T \int_{\mathbb{R}} |w_+~ \partial_x^2 r|^2 dx~ds \right)^{1/2} \\
& \lesssim T^{1/2} \langle T \rangle^{-1} \left( \sum_{j\in \mathbb{Z}} \int_0^T \int_{\mathbb{R}} |\chi_{j/N}(x) w_+~ \partial_x^2 r|^2 dx~ds \right)^{1/2} \\
& \lesssim T^{1/2} \langle T \rangle^{3/2} \lambda_3^T(r) \lambda_4^T (w_+) \lesssim T^{1/2} \langle T \rangle^{3/2} \|(r, w_\pm, q)\|_{Y_T}^2,
\end{aligned}
\end{equation}
where we used \eqref{sum-of-cut-offs}. 


Next, we prove the bound \eqref{bound-nonlin-polynom} for the $\|\mathcal{M}_\pm\|_{H_x^1}$-related terms. 
Dealing with the generic term of $\mathcal{M}_\pm$, we have 
{\small{
\begin{equation*}
\begin{aligned}
&\|(\partial_x \Psi_+) \partial_x (\Psi_- w_+)\|_{L_s^1 (0, T) H_x^1}  \lesssim \|(\partial_x \Psi_+) \partial_x (\Psi_- w_+)\|_{L_s^1 (0, T) L_x^2} + \|\partial_x ((\partial_x \Psi_+) \partial_x (\Psi_- w_+))\|_{L_s^1 (0, T) L_x^2}\\
& \qquad \lesssim \| v \Psi_+  \partial_x (\Psi_- w_+)\|_{L_s^1 (0, T) L_x^2} + \|\partial_x (v \Psi_+ \partial_x (\Psi_- w_+))\|_{L_s^1 (0, T) L_x^2},
\end{aligned}
\end{equation*}
}}
where we used the definition of $\Psi_+$ in \eqref{gauge-function}. 
Here, the first term can be treated similarly to \eqref{first-term-lemma-nonl}, while the estimates for the second term are similar to \eqref{first-term-lemma-nonl} and \eqref{second-term-lemma-nonl}.

Finally, we show the estimates for the generic term of $\|x \mathcal{N}\|_{L_x^2}$. Using the relation $\partial_x(xr) = r + x \partial_x v$, we write 
\begin{equation*}
\begin{aligned}
\|x (\partial_x r) \Psi_- w_+\|_{L_s^1(0, T) L_x^2} \lesssim \|\partial_x (x r) \Psi_- 
w_+\|_{L_s^1(0, T) L_x^2} + \|r \Psi_- w_+\|_{L_s^1(0, T) L_x^2}.
\end{aligned}
\end{equation*}
The last term is treated similarly to \eqref{first-term-lemma-nonl}, while the first term is treated like \eqref{second-term-lemma-nonl} as follows:
\begin{equation*}
\begin{aligned}
\| \partial_x (xr) \Psi_- w_+ \|_{L_s^1 (0, T) L_x^2} & \lesssim T^{1/2} \left( \int_0^T \int_{\mathbb{R}} |w_+~ \partial_x (xr)|^2 dx~ds \right)^{1/2} \\
& \lesssim T^{1/2} \langle T \rangle^{-1} \left( \sum_{j\in \mathbb{Z}} \int_0^T \int_{\mathbb{R}} |\chi_{j/N}(x) w_+~ \partial_x (xr)|^2 dx~ds \right)^{1/2} \\
& \lesssim T^{1/2} \langle T \rangle^{3/2} \lambda_6^T(r) \lambda_4^T (w_+) \lesssim T^{1/2} \langle T \rangle^{3/2} \|(r, w_\pm, q)\|_{Y_T}^2. 
\end{aligned}
\end{equation*}

\end{proof}

\begin{lemma}
\label{lemma-nonl-u}
There exist positive $T$-independent constants $\theta_1, \theta_2>0$ and an integer $N\geq 2$ such that, for a polynomial $p(y) := y^2 + y^3 + \dots + y^N$, we have  
\begin{equation}
\label{bound-nonlin-u}
\begin{aligned}
\int_0^T \Big( \langle T \rangle \|qr(s)\|_{H_x^3}
+ \langle T \rangle \|q^2(s)\|_{H_x^2} + \|x \partial_x (|q|^2) \|_{L_x^2} &+ \|x qr \|_{L_x^2}
\Big) ds \\
&\lesssim T^{\theta_1} \langle T \rangle^{\theta_2} p \big( \|(r, w_\pm, q)\|_{Y_T} \big).
\end{aligned}
\end{equation}
\end{lemma}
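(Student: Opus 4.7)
\noindent \emph{Proof plan.} I intend to bound each of the four integrands in \eqref{bound-nonlin-u} separately, collect the resulting powers of $T$ and $\langle T \rangle$, and read off the polynomial dependence on $\|(r,w_\pm,q)\|_{Y_T}$. Two of the pieces are essentially immediate. The algebra property of $H^2(\mathbb{R})$ gives $\|q^2\|_{H_x^2} \lesssim \mu_1^T(q)^2$, so that $\int_0^T \langle T \rangle \|q^2\|_{H_x^2}\, ds \lesssim T\langle T\rangle\, \mu_1^T(q)^2$. For $\|xqr\|_{L_x^2}$, I split $\|xqr\|_{L_x^2} \leq \|xq\|_{L_x^2}\|r\|_{L_x^\infty} \lesssim \lambda_5^T(q)\lambda_1^T(r)$ and integrate in $t$. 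For $\|x\partial_x(|q|^2)\|_{L_x^2}$, I expand $\partial_x(|q|^2)=\bar q\,\partial_x q + q\,\partial_x\bar q$ and bound each summand by $\|xq\|_{L_x^2}\|\partial_x q\|_{L_x^\infty}\lesssim \lambda_5^T(q)\,\mu_1^T(q)$, using the one-dimensional Sobolev embedding $H^2\hookrightarrow L^\infty$ applied to $\partial_x q$.

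The main effort is the estimate of $\int_0^T \langle T \rangle \|qr\|_{H_x^3}\, ds$, since the $Y_T$-seminorms control $r$ directly only in $H_x^1$, with the second derivative of $r$ carried by $w_\pm$ through the gauge identity \eqref{der-order-reduction-dx2v}. Applying the Leibniz rule to $\partial_x^3(qr)$ yields four summands. Three of them are handled by routine product estimates: $\|(\partial_x^3 q)r\|_{L_x^2}\lesssim \mu_1^T(q)\,\lambda_1^T(r)$; $\|(\partial_x^2 q)(\partial_x r)\|_{L_x^2}\leq \|q\|_{H_x^3}\|\partial_x r\|_{L_x^\infty}$ which becomes $T^{5/6}\mu_1^T(q)\lambda_2^T(r)$ after H\"older in $t$; and $\|(\partial_x q)(\partial_x^2 r)\|_{L_x^2}$ reduces, via \eqref{der-order-reduction-dx2v} and $\partial_x\Psi_\pm = \mp\tfrac{2id}{3a} r\Psi_\pm$ from \eqref{gauge-function}, to products of $\mu_1^T(q)$ with $\lambda_1^T(r)$ and $\lambda_1^T(w_\pm)$.

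The hardest summand, and the main obstacle, is $q\,\partial_x^3 r$: here $\partial_x^3 r$ is not globally in $L_x^2$ under our seminorms. The plan is to apply the gauge identity a second time, writing
\begin{equation*}
\partial_x^3 r = \partial_x^2(\Psi_- w_+) + \partial_x^2(\Psi_+ w_-),
\end{equation*}
and expanding by Leibniz; every resulting summand other than $\Psi_\mp \partial_x^2 w_\pm$ is a polynomial expression in $r,\partial_x r, w_\pm,\partial_x w_\pm$ and is absorbed by $\|q\|_{L_x^\infty}\lesssim\mu_1^T(q)$ together with $\lambda_1^T(r),\lambda_2^T(r),\lambda_1^T(w_\pm)$. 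For the remaining piece $q\,\Psi_\mp\,\partial_x^2 w_\pm$, I will use the cut-off argument of \eqref{second-term-lemma-nonl}: exploiting $1\lesssim N^{-1}\sum_j \chi_{j/N}^2(x)$ from \eqref{sum-of-cut-offs} and Cauchy--Schwarz in $t$, the time integral is bounded by
\begin{equation*}
T^{1/2} N^{-1/2} \Bigl(\sup_j \int_0^T\!\!\int |\chi_{j/N} \partial_x^2 w_\pm|^2\, dx\, ds\Bigr)^{1/2}\Bigl(\sum_j \sup_{t,x}|\chi_{j/N}q|^2\Bigr)^{1/2} \lesssim T^{1/2}\langle T \rangle^{2}\,\lambda_3^T(w_\pm)\,\lambda_4^T(q),
\end{equation*}
where I used $N\approx\langle T\rangle$ in the last step. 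Multiplying by the outer $\langle T \rangle$ and collecting the contributions of all four integrands yields the asserted polynomial bound.
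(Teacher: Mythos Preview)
Your proposal is correct and follows essentially the same approach as the paper: the three ``easy'' terms are handled identically via the algebra property of $H_x^2$ and the weighted norms $\lambda_5^T(q)$, and for the critical term $q\,\partial_x^3 r$ you use the same gauge decomposition $\partial_x^3 r = \partial_x^2(\Psi_- w_+) + \partial_x^2(\Psi_+ w_-)$ followed by the cut-off argument of \eqref{second-term-lemma-nonl}, pairing $\lambda_3^T(w_\pm)$ with $\lambda_4^T(q)$. You give a bit more detail on the intermediate Leibniz terms than the paper does, but the strategy is the same.
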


\begin{proof}
The last three terms on the left-hand side of \eqref{bound-nonlin-u} are easy to deal with. Indeed, we have
\begin{equation*}
\begin{aligned}
\|q^2\|_{L_s^1(0, T) H_x^2} \lesssim T \|q\|_{L_s^\infty(0, T) H_x^2}^2 \lesssim T \mu_1^T (q)^2 \lesssim T  \|(r, w_\pm, q)\|_{Y_T}^2,
\end{aligned}
\end{equation*}
where we used the Banach algebra property of $H_x^2$ space,
\begin{equation*}
\begin{aligned}
\|x \partial_x (|q|^2)\|_{L_s^1(0, T) L_x^2} \lesssim
T \|x q\|_{L_s^\infty(0, T) L_x^2} \|q\|_{L_s^\infty(0, T) H_x^2} \lesssim 
T \lambda_5^T(q) \mu_1^T(q) \lesssim
T  \|(r, w_\pm, q)\|_{Y_T}^2,
\end{aligned}
\end{equation*}
and similar steps for $\|xqr \|_{L_x^2}$.

When dealing with $\|qr \|_{H_x^3}$, the most problematic term is $q \partial_x^3 r$ which can be treated similarly to \eqref{second-term-lemma-nonl}. That is 
\begin{equation*}
\begin{aligned}
\|q\partial_x^3 r\|_{L_s^1(0, T) L_x^2} \lesssim \|q\partial_x^2(\Psi_- w_+)\|_{L_s^1(0, T) L_x^2} + \|q\partial_x^2(\Psi_+ w_-)\|_{L_s^1(0, T) L_x^2},
\end{aligned}
\end{equation*}
where we decomposed $\partial_x^3 r$ similar to \eqref{der-order-reduction-dx2v}. The bounds for these terms can be derived using the steps as in \eqref{second-term-lemma-nonl}.

\end{proof}

\begin{lemma}
\label{lemma-nonl-commutator}
There exist positive $T$-independent constant $\theta_1. \theta_2>0$ and an integer $N\geq 2$ such that, for a polynomial $p(y) := y^2 + y^3 + \dots + y^N$, we have 
\begin{equation}
\label{bound-nonlin-commut}
\int_0^T \langle T \rangle \Big( \|\Psi_\pm [P_\pm, r] \partial_x^2 (\Psi_\mp w_\pm)\|_{H_x^1} + \|\Psi_\pm \partial_x^2 P_\pm (|q|^2) \|_{H_x^1} \Big) ds \lesssim  T^{\theta_1} \langle T \rangle^{\theta_2} p \big( \|(r, w_\pm, q)\|_{Y_T} \big).
\end{equation}
\end{lemma}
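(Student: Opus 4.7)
The plan is to bound each integrand in \eqref{bound-nonlin-commut} separately using three ingredients: (i) the pointwise bounds $|\Psi_\pm|=1$ and $|\partial_x\Psi_\pm|\lesssim|r|$ from \eqref{gauge-function}; (ii) the commutator estimate of Lemma~\ref{lemma-commutator}; and (iii) the algebraic identity $\partial_x r=\Psi_-w_++\Psi_+w_-$ stated just after Proposition~\ref{well-posedness-proposition}, which when differentiated once yields
\begin{equation*}
\partial_x^2 r \;=\; \tfrac{2id}{3a}\,r(\Psi_-w_+-\Psi_+w_-)+\Psi_-\partial_x w_++\Psi_+\partial_x w_-,
\end{equation*}
so that $\|\partial_x^2 r\|_{L_x^\infty}\lesssim \|r\|_{L_x^\infty}(\|w_+\|_{L_x^\infty}+\|w_-\|_{L_x^\infty})+\|\partial_x w_+\|_{L_x^\infty}+\|\partial_x w_-\|_{L_x^\infty}$. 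This identity is what will let us avoid $\|\partial_x^2 r\|_{L_x^\infty}$ and $\|\partial_x^2 w_\pm\|_{L_x^\infty}$ at a single time, neither of which is controlled by our semi-norms.

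The term $\|\Psi_\pm\partial_x^2 P_\pm(|q|^2)\|_{H_x^1}$ is the easier one. Since $P_\pm$ is bounded on every $H^s$ and $|\Psi_\pm|=1$, distributing $\partial_x$ together with $|\partial_x\Psi_\pm|\lesssim|r|$ gives
\[
\|\Psi_\pm\partial_x^2 P_\pm(|q|^2)\|_{H_x^1}\lesssim (1+\|r\|_{L_x^\infty})\,\||q|^2\|_{H_x^3}\lesssim (1+\lambda_1^T(r))\,\mu_1^T(q)^2,
\]
using the Banach algebra property of $H^s(\mathbb{R})$ for $s>1/2$ and the Sobolev embedding $H^1\hookrightarrow L^\infty$. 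Multiplying by $\langle T\rangle$ and integrating in $s$ contributes $\lesssim T\langle T\rangle(1+\lambda_1^T(r))\mu_1^T(q)^2$, which fits the desired polynomial form.

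For $\|\Psi_\pm[P_\pm,r]\partial_x^2(\Psi_\mp w_\pm)\|_{H_x^1}$, the $L_x^2$ part is handled directly. Using Lemma~\ref{lemma-commutator} with $l=0$, $m=1$ applied to $\partial_x^2=\partial_x\!\circ\!\partial_x$ I obtain
\[
\|[P_\pm,r]\partial_x^2(\Psi_\mp w_\pm)\|_{L_x^2}\lesssim \|\partial_x r\|_{L_x^\infty}\,\|\partial_x(\Psi_\mp w_\pm)\|_{L_x^2}\lesssim \|\partial_x r\|_{L_x^\infty}(1+\|r\|_{L_x^\infty})\|w_\pm\|_{H_x^1}.
\]
Integrating in $s$ and applying Hölder with $\partial_x r\in L_t^6 L_x^\infty$ (from $\lambda_2^T(r)$) and $r,w_\pm\in L_t^\infty H_x^1$ (from $\lambda_1^T$) gives $\lesssim T^{5/6}\lambda_2^T(r)(1+\lambda_1^T(r))\lambda_1^T(w_\pm)$.

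The main obstacle is the $\partial_x$-part of the $H_x^1$ norm. Distributing $\partial_x$ produces $(\partial_x\Psi_\pm)[P_\pm,r]\partial_x^2(\Psi_\mp w_\pm)$, which is bounded as above with an extra factor $\|r\|_{L_x^\infty}$, and $\Psi_\pm\partial_x\!\bigl([P_\pm,r]\partial_x^2(\Psi_\mp w_\pm)\bigr)$. The identity $\partial_x[P_\pm,r]=[P_\pm,\partial_x r]+[P_\pm,r]\partial_x$ splits the latter into
\[
\Psi_\pm[P_\pm,\partial_x r]\partial_x^2(\Psi_\mp w_\pm)\quad\text{and}\quad \Psi_\pm[P_\pm,r]\partial_x^3(\Psi_\mp w_\pm).
\]
For each I apply Lemma~\ref{lemma-commutator} so that only $\partial_x(\Psi_\mp w_\pm)$ remains in $L_x^2$, at the cost of producing $\|\partial_x^2 r\|_{L_x^\infty}$: the first uses $l=0$, $m=1$ with $h=\partial_x r$ applied to $\partial_x^2=\partial_x\!\circ\!\partial_x$, and the second uses $l=0$, $m=2$ with $h=r$ applied to $\partial_x^3=\partial_x^2\!\circ\!\partial_x$, both yielding a bound $\lesssim \|\partial_x^2 r\|_{L_x^\infty}\|\partial_x(\Psi_\mp w_\pm)\|_{L_x^2}$. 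Invoking the identity for $\partial_x^2 r$ displayed above, $\|\partial_x^2 r\|_{L_x^\infty}$ is replaced by a polynomial in $\|r\|_{L_x^\infty}$, $\|w_\pm\|_{L_x^\infty}$ and $\|\partial_x w_\pm\|_{L_x^\infty}$. The resulting tri- and quadri-linear terms are then integrated in $s$ by Hölder, using $L_t^6 L_x^\infty$ norms for $r,\partial_x r,w_\pm,\partial_x w_\pm$ (controlled by $\lambda_2^T(r)$ and $\lambda_2^T(w_\pm)$) and $L_t^\infty H_x^1$ norms (controlled by $\lambda_1^T$). The resulting time factors are powers $T^{\theta_1}$ with $\theta_1\in\{1/2,2/3,5/6,1\}$, giving overall a bound of the form $T^{\theta_1}\langle T\rangle^{\theta_2}\,p(\|(r,w_\pm,q)\|_{Y_T})$ with $\deg p\le 4$, as required.
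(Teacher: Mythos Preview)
Your proof is correct and follows essentially the same strategy as the paper: both treat the $|q|^2$ term via the Banach algebra property of $H^3$ together with $|\Psi_\pm|=1$ and $|\partial_x\Psi_\pm|\lesssim|r|$, and both handle the commutator term by applying Lemma~\ref{lemma-commutator} to reduce matters to $\|\partial_x^2 r\|_{L_x^\infty}\|\partial_x(\Psi_\mp w_\pm)\|_{L_x^2}$, then substitute the identity \eqref{der-order-reduction-dx2v} for $\partial_x^2 r$ and close with H\"older in time via the $\lambda_2^T$ and $\lambda_1^T$ semi-norms. The only cosmetic difference is that the paper applies Lemma~\ref{lemma-commutator} directly with $l=1$, $m=1$ to $\partial_x[P_\pm,r]\partial_x\bigl(\partial_x(\Psi_\mp w_\pm)\bigr)$, whereas you first split $\partial_x[P_\pm,r]=[P_\pm,\partial_x r]+[P_\pm,r]\partial_x$ and then apply the lemma to each piece; both routes produce the same $\|\partial_x^2 r\|_{L_x^\infty}$ factor.
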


\begin{proof}
The bound for the second term in \eqref{bound-nonlin-commut} can be obtained using the Banach algebra property of the $H_x^3$ space,
\begin{equation*}
\begin{aligned}
&\|\Psi_\pm \partial_x^2 P_\pm (|q|^2) \|_{L_s^1(0, T) H_x^1}  \lesssim \|\Psi_\pm \partial_x^2 P_\pm (|q|^2) \|_{L_s^1(0, T) L_x^2} + \|\partial_x \left( \Psi_\pm \partial_x^2 P_\pm (|q|^2) \right) \|_{L_s^1(0, T) L_x^2}\\
&\qquad\qquad \lesssim \| \partial_x^2 (|q|^2) \|_{L_s^1(0, T) L_x^2} + \| r \partial_x^2 (|q|^2) \|_{L_s^1(0, T) L_x^2} + \| \partial_x^3 (|q|^2) \|_{L_s^1(0, T) L_x^2}\\
& \qquad\qquad\lesssim \| |q|^2 \|_{L_s^1(0, T) H_x^3} \left(1 + \| r\|_{L_s^\infty(0, T) L_x^\infty} \right)\\
& \qquad\qquad\lesssim T \left( \|(r, w_\pm, q)\|_{Y_T}^2 + \|(r, w_\pm, q)\|_{Y_T}^3 \right).
\end{aligned}
\end{equation*}
To treat the first term in \eqref{bound-nonlin-commut} we use the commutator estimates \eqref{commutator-estimate}. 
First, we write
\begin{equation}
\label{commut-term-bounds}
\begin{aligned}
\|\Psi_\pm [P_\pm, r] \partial_x^2 (\Psi_\mp w_\pm)\|_{L_s^1(0, T) H_x^1} \lesssim & ~ \|\Psi_\pm [P_\pm, r] \partial_x^2 (\Psi_\mp w_\pm)\|_{L_s^1(0, T) L_x^2} \\
& + \|\partial_x \big( \Psi_\pm [P_\pm, r] \partial_x^2 (\Psi_\mp w_\pm) \big)\|_{L_s^1(0, T) L_x^2}.
\end{aligned}
\end{equation}
Then, using H\"{o}lder inequality in $s$, \eqref{commutator-estimate} and \eqref{gauge-function}, 
\begin{equation*}
\begin{aligned}
&\|\Psi_\pm [P_\pm, r] \partial_x^2 (\Psi_\mp w_\pm)\|_{L_s^1(0, T) L_x^2}  \lesssim \int_0^T \|\partial_x r \|_{L_x^\infty} \| \partial_x \left(\Psi_\mp w_\pm \right) \|_{L_x^2} ds\\
& \qquad \qquad\lesssim 
T^{5/6} \|\partial_x r\|_{L_s^6 (0, T) L_x^\infty} \left( \|r\|_{L_s^\infty (0, T) L_x^\infty}  \|w_\pm\|_{L_s^\infty (0, T) L_x^2} + \|w_\pm\|_{L_s^\infty (0, T) H_x^1} \right)\\
& 
\qquad\qquad\lesssim T^{5/6} \left( \|(r, w_\pm, q)\|_{Y_T}^2 + \|(r, w_\pm, q)\|_{Y_T}^3 \right).
\end{aligned}
\end{equation*}
For the second term in \eqref{commut-term-bounds}, using \eqref{gauge-function}, we have
\begin{equation*}
\begin{aligned}
\|\partial_x \big( \Psi_\pm [P_\pm, r] \partial_x^2 (\Psi_\mp w_\pm) \big)\|_{L_s^1(0, T) L_x^2} \lesssim & ~ \|\Psi_\pm r [P_\pm, r] \partial_x^2 (\Psi_\mp w_\pm)\|_{L_s^1(0, T) L_x^2} \\
& + \|\Psi_\pm \partial_x [P_\pm, r] \partial_x^2 (\Psi_\mp w_\pm) \|_{L_s^1(0, T) L_x^2}.
\end{aligned}
\end{equation*}
The most difficult term here is the last one, when the derivative hits the commutator part. To treat it we again use \eqref{commutator-estimate} as follows
\begin{equation}
\label{commutator-lemma-term-estimate}
\begin{aligned}
\| \Psi_\pm \partial_x [P_\pm, r] \partial_x^2 (\Psi_\mp w_\pm) \|_{L_s^1(0, T) L_x^2}  & \lesssim \int_0^T \|\partial_x^2 r\|_{L_x^\infty} \|\partial_x (\Psi_\mp w_\pm) \|_{L_x^2} ds\\ 
& \lesssim T^{5/6} \left( \lambda_2^T(w_+) + \lambda_2^T(w_-) \right) \lambda_1^T(w_\pm) \big( 1+ \lambda_1^T(r)\big),
\end{aligned}
\end{equation}
where we used H\"{older} inequality in $s$ and the relation \eqref{der-order-reduction-dx2v}.
\end{proof}

\subsection{Fixed-point argument} 
Applying the results of Lemmas \ref{lemma-estimate-N-M}-\ref{lemma-nonl-commutator} to the estimate \eqref{main-bound}, there exist positive constants $C$, $\theta_1$ and $\theta_2$ such that 
\begin{equation}
\label{inequality-integral-system}
\|(r, w_\pm, q)\|_{Y_T} \leq C\langle T \rangle \|(r_0, w_{\pm,0}, q_0)\|_{Y} + CT^{\theta_1} \langle T \rangle^{\theta_2} p\left( \|(r, w_\pm, q)\|_{Y_T} \right).
\end{equation}
Now the proof of Proposition \ref{well-posedness-proposition} follows from a fixed point argument applied to the integral equations in \eqref{integral-system}. 

We provide the proof for $p(y) = y^2$ in \eqref{inequality-integral-system}. The proof for general case $p(y) = y^2 + y^3 + ... y^N$ follows in the same manner. Starting from  the estimate
\begin{equation}
\label{fixed-point-ineq-2}
\|(r, w_\pm, q)\|_{Y_T} \leq C \langle T \rangle \|(r_0, w_{\pm,0}, q_0)\|_{Y} + CT^{\theta_1} \langle T \rangle^{\theta_2}  \|(r, w_\pm, q)\|_{Y_T}^2, 
\end{equation}
we show that, given $\varepsilon:= \|(r_0, w_{\pm,0}, q_0)\|_{Y}$, there exist $\delta>0$ and $T>0$ such that the map defined by the right-hand side of the integral equations \eqref{integral-system} takes a ball of radius $\delta$, in $Y_T$, which we denote by $B_\delta^{Y_T}$, into itself. 
Moreover, repeating the same arguments one can
verify that the map is a contraction, hence, there is a unique fixed point in $B_\delta^{Y_T}$.
The value of $T$ is not known beforehand, so we show the proof for both possibilities: $T \leq 1$ and $T \geq 1$. 

Assume $T \leq 1$, so that $\langle T \rangle \leq 2$. Then, \eqref{fixed-point-ineq-2} becomes 
\begin{equation}
\label{fixed-point-ineq-3}
\|(r, w_\pm, q)\|_{Y_T} \leq C \|(r_0, w_{\pm,0}, q_0)\|_{Y} + CT^{\theta_1}  \|(r, w_\pm, q)\|_{Y_T}^2 
\end{equation}
for another choice of $C$ and the proof is standard. 


If we assume $T \geq 1$, then $\langle T \rangle \leq 2T$. Then, \eqref{fixed-point-ineq-2} becomes 
\begin{equation}
\label{fixed-point-ineq-4}
\|(r, w_\pm, q)\|_{Y_T} \leq C T \|(r_0, w_{\pm,0}, q_0)\|_{Y} + CT^{\theta}  \|(r, w_\pm, q)\|_{Y_T}^2 
\end{equation}
for another choice of $C$ and $\theta = \theta_1+\theta_2$. 
From \eqref{fixed-point-ineq-4}, we want to show the existence of positive $\delta$ and $T$ such that 
\begin{equation*}
\begin{aligned}
CT\varepsilon \leq \frac{1}{2} \delta \quad \text{and} \quad C T^{\theta} \delta^2 \leq \frac{1}{2}\delta. 
\end{aligned}
\end{equation*}
This can be done with careful computations, and we derive
\begin{equation}
\label{T-delta-choice-2}
\begin{aligned}
T := \varepsilon^{-1/(1+\theta)} (2C)^{-2/(1+\theta)} \quad \text{and} \quad \delta := \varepsilon^{\theta/(1+\theta)} (2C)^{(\theta-1)/(1+\theta)}.
\end{aligned}
\end{equation}
Note that for sufficiently small $\varepsilon>0$, $\delta$ in \eqref{T-delta-choice-2} is sufficiently small too and the bound for $T$ is compatible with the corresponding assumption $T \geq 1$. Moreover, this shows that as $\varepsilon \to 0$, we have $T \to \infty$. 
On the other hand, for sufficiently large $\varepsilon>0$, the bound for $T$ is not compatible with $T \geq 1$ assumption, which guarantees that $T \leq 1$ in such case.

\section*{Acknowledgements}

We thank Didier Pilod for helpful discussions and for pointing out to us several references.
Part of this work was written when A.K. was a Postdoctoral Fellow at the University of Toronto; A.K. was partially supported by University of Toronto and McMaster University.
Part of this work is contained in C.K.'s PhD thesis. C.K is
currently a Coleman Postdoctoral Fellow at Queen's University. 
C. S. is partially supported by the Natural Sciences and Engineering Research Council of Canada (NSERC) under grant RGPIN-2024-03886.

\appendix

\section{Proof of Proposition \ref{cubicH-summary}}
\label{appendix-proof}

\begin{proposition} \label{proposition-term-I}
The cubic part of $\mathrm{I}$ is given by
\begin{equation}
\label{I-3-definition-appendix}
\begin{aligned}
\mathrm{I}^{(3)}
= \frac{1}{2}\int_{\mathbb{R}} &  \Big[ -\rho \eta ~(DB_0^{-1} G_{11}^{(0)}\xi)^2
-(\rho-\rho_1)\eta ~(G^{(0)}B_0^{-1} G_{11}^{(0)}\xi)^2  \\
& \quad + \rho_1 \eta  ~(DB_0^{-1} G^{(0)}\xi)^2
 - \rho_1 \eta_1 ~(G^{(0)}B_0^{-1}G_{12}^{(0)}\xi)^2\Big]\, dx.
\end{aligned}
\end{equation}
\end{proposition}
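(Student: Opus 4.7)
The plan is to substitute the DNO expansions from \eqref{DNO-expansion}--\eqref{DNO-01-definition} and the Neumann expansion
$$B^{-1} = B_0^{-1} - B_0^{-1} B^{(1)} B_0^{-1} + \mathcal{O}(|(\eta,\eta_1)|^2)$$
into $\mathrm{I}=\frac12\int \xi\, G_{11}B^{-1}G\,\xi\,dx$ and collect the pieces that are homogeneous of degree one in $(\eta,\eta_1)$. With $B^{(1)}=\rho G_{11}^{(10)}+\rho G_{11}^{(01)}+\rho_1 G^{(1)}$, this yields
\begin{equation*}
\mathrm{I}^{(3)}=\tfrac12\int \xi\Big[G_{11}^{(10)}B_0^{-1}G^{(0)} + G_{11}^{(01)}B_0^{-1}G^{(0)} + G_{11}^{(0)}B_0^{-1}G^{(1)} - G_{11}^{(0)}B_0^{-1}B^{(1)}B_0^{-1}G^{(0)}\Big]\xi\,dx.
\end{equation*}
I would then split this into an $\eta_1$-contribution (the terms containing $G_{11}^{(01)}$) and an $\eta$-contribution (the remaining ones).

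For the $\eta_1$-part, the formula $G_{11}^{(01)}=-G_{12}^{(0)}\eta_1 G_{12}^{(0)}$ (which follows from \eqref{DNO-01-definition} together with $G_{12}^{(0)}=-D\,\mathrm{csch}(h_1D)$) factors this contribution as
$$\tfrac12\int \xi\,\big(-G_{12}^{(0)}+\rho G_{11}^{(0)}B_0^{-1}G_{12}^{(0)}\big)\,\eta_1\,G_{12}^{(0)}B_0^{-1}G^{(0)}\xi\,dx.$$
Since $B_0=\rho G_{11}^{(0)}+\rho_1G^{(0)}$, the prefactor simplifies immediately to $-\rho_1 G^{(0)}B_0^{-1}G_{12}^{(0)}$, and because all operators involved are commuting Fourier multipliers, this collapses to $-\tfrac{\rho_1}{2}\int\eta_1(G^{(0)}B_0^{-1}G_{12}^{(0)}\xi)^2\,dx$, matching the last term of \eqref{I-3-definition-appendix}.

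For the $\eta$-part, I would substitute $G^{(1)}=D\eta D-G^{(0)}\eta G^{(0)}$ and $G_{11}^{(10)}=G_{11}^{(0)}\eta G_{11}^{(0)}-D\eta D$ into each of the three contributions, then integrate by parts to pull $\eta$ to the outside, using $D^{*}=-D$ and the self-adjointness of $G^{(0)}$ and $G_{11}^{(0)}$. Introducing the shorthand $X':=B_0^{-1}G_{11}^{(0)}\xi$, $Y':=B_0^{-1}G^{(0)}\xi$, $Z:=D\xi$ and $\alpha_Y:=G_{11}^{(0)}Y'$, $\beta_X:=G^{(0)}X'$, the four resulting terms combine into a single integral $\int\eta\,[\cdots]\,dx$ whose integrand (after using $B_0 X'=G_{11}^{(0)}\xi$ and $B_0Y'=G^{(0)}\xi$) reduces to
$$-(\rho-\rho_1)\,\alpha_Y\beta_X \;-\;(\rho-\rho_1)(DX')(DY')\;+\;Z\,(DY'-DX').$$
Commutativity of the Fourier multipliers gives $\alpha_Y=\beta_X$, accounting for the $-(\rho-\rho_1)\eta(G^{(0)}B_0^{-1}G_{11}^{(0)}\xi)^2$ term. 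For the remaining two terms the key identity is
$$Z=\rho\,DX'+\rho_1\,DY',$$
obtained by applying $DB_0^{-1}$ to the defining relation $B_0\xi=\rho G_{11}^{(0)}\xi+\rho_1G^{(0)}\xi$. Substituting this identity into $Z(DY'-DX')$ cancels the cross term $(\rho-\rho_1)(DX')(DY')$ and produces exactly $-\rho(DX')^2+\rho_1(DY')^2$, completing the match with \eqref{I-3-definition-appendix}.

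The main obstacle is purely bookkeeping: keeping track of signs across multiple integrations by parts on operators with mixed Hilbert-transform and derivative factors. The only conceptually non-obvious step is recognizing and applying $Z=\rho DX'+\rho_1DY'$, which is what eliminates the stray $(DX')(DY')$ cross terms and converts the expression into a sum of squares of $DX'$ and $DY'$.
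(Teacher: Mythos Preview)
Your proof is correct. The $\eta_1$-computation is identical to the paper's. For the $\eta$-part, however, the paper organizes the algebra differently: rather than substituting the explicit formulas for $G^{(1)}$ and $G_{11}^{(10)}$ into the raw four-term expansion and then cleaning up with the identity $Z=\rho DX'+\rho_1 DY'$, it first simplifies at the operator level. Writing $1-\rho G_{11}^{(0)}B_0^{-1}=\rho_1 G^{(0)}B_0^{-1}$ and $1-\rho_1 B_0^{-1}G^{(0)}=\rho B_0^{-1}G_{11}^{(0)}$ collapses the linear-order part of $G_{11}B^{-1}G$ to the symmetric form
\[
\rho_1\,G^{(0)}B_0^{-1}\big(G_{11}^{(10)}+G_{11}^{(01)}\big)B_0^{-1}G^{(0)}\;+\;\rho\,G_{11}^{(0)}B_0^{-1}G^{(1)}B_0^{-1}G_{11}^{(0)},
\]
after which each term is already of type $A\eta A$ and the squares fall out immediately upon substituting $G_{11}^{(10)}=G_{11}^{(0)}\eta G_{11}^{(0)}-D\eta D$ and $G^{(1)}=D\eta D-G^{(0)}\eta G^{(0)}$. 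This is the same $B_0$-identity you invoke (your $Z=\rho DX'+\rho_1 DY'$ is precisely $DB_0^{-1}B_0\xi=D\xi$), but applying it upstream eliminates the cross terms before they appear and spares you the bookkeeping you flag as the main obstacle.
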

\begin{proof}
Using the expansions \eqref{DNO-expansion} and \eqref{B-expansion} we get
\begin{align*}
&G_{11}B^{-1}G = (G_{11}^{(0)}+G_{11}^{(10)}+G_{11}^{(01)}) \left (B_{0}^{-1} -  B_{0}^{-1} B^{(1)} B_{0}^{-1} \right )(G^{(0)}+G^{(1)}) + h.o.t. \nonumber \\
& \quad= G_{11}^{(0)} B_{0}^{-1} G^{(0)} + {\rho_{1}G^{(0)}}B_{0}^{-1}(G_{11}^{(10)}+G_{11}^{(01)})B_{0}^{-1} {G^{(0)}} + {\rho G_{11}^{(0)}}{B_{0}^{-1}}G^{(1)} {B_{0}^{-1}} G_{11}^{(0)} + h.o.t.
\end{align*}
From the definitions \eqref{DNO-10-definition}--\eqref{DNO-01-definition}, we have
\begin{equation}
\label{DNO-linear-thru-leading}
\begin{cases}
G_{11}^{(10)} = G_{11}^{(0)} \eta G_{11}^{(0)} - D \eta D, \\
G_{11}^{(01)} = -G_{12}^{(0)} \eta_1 G_{12}^{(0)}.
\end{cases}
\end{equation}
We substitute \eqref{DNO-linear-thru-leading} and  $G^{(1)}$ from \eqref{DNO-1-definition} into the expression for $G_{11}B^{-1}G$ to rewrite it in terms of the leading terms of the Dirichlet-Neumann operators. Then, we put the result back into $I$ in \eqref{kinetic-energy-terms} to obtain \eqref{I-3-definition-appendix}.
\end{proof}

\begin{proposition}\label{proposition-term-II}
The cubic part of $\mathrm{II}$ is given by
\begin{equation}
\begin{aligned}
\mathrm{II}^{(3)} = \int_{\mathbb{R}} \Big[ & - \rho\eta (DB_0^{-1} G_{11}^{(0)} \xi) 
(DB_0^{-1} G_{12}^{(0)} \xi_1) - (\rho-\rho_1) \eta 
(G^{(0)}B_0^{-1} G_{11}^{(0)} \xi) 
( G^{(0)}B_0^{-1} G_{12}^{(0)} \xi_1) \\
& -\rho\eta (D B_0^{-1} G^{(0)} \xi) (DB_0^{-1} G_{12}^{(0)} \xi_1)\\
& - \eta_1 (G_{12}^{(0)} B_0^{-1} G^{(0)} \xi)
\left( G^{(0)} B_0^{-1} (\rho_1 G_{11}^{(0)} + \rho G^{(0)}) \xi_1 \right)\Big]\, dx. 
\end{aligned}    
\end{equation}
\end{proposition}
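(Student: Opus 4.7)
\begin{proof1}[Proof proposal]
The plan is to follow the same scheme used for $\mathrm{I}^{(3)}$ in Proposition~\ref{proposition-term-I}, now applied to the operator $GB^{-1}G_{12}$ appearing in $\mathrm{II}$. First, I would expand the three factors to first order using \eqref{DNO-expansion} and \eqref{B-expansion}:
\begin{equation*}
\begin{aligned}
GB^{-1}G_{12} &= \big(G^{(0)}+G^{(1)}\big)\big(B_0^{-1} - B_0^{-1} B^{(1)} B_0^{-1}\big)\big(G_{12}^{(0)} + G_{12}^{(10)} + G_{12}^{(01)}\big) + \text{h.o.t.},
\end{aligned}
\end{equation*}
and retain only the terms that are linear in $(\eta,\eta_1)$, namely
\begin{equation*}
G^{(1)} B_0^{-1} G_{12}^{(0)} \;-\; G^{(0)} B_0^{-1} B^{(1)} B_0^{-1} G_{12}^{(0)} \;+\; G^{(0)} B_0^{-1}\big(G_{12}^{(10)} + G_{12}^{(01)}\big).
\end{equation*}

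Next I would plug in the explicit expressions for the homogeneous pieces. From \eqref{DNO-1-definition}, $G^{(1)} = D\eta D - G^{(0)} \eta G^{(0)}$. From \eqref{DNO-10-definition}–\eqref{DNO-01-definition}, together with $G_{11}^{(0)} = D\coth(h_1D)$ and $G_{12}^{(0)} = -D\,\mathrm{csch}(h_1 D)$, one gets the compact forms
\begin{equation*}
G_{12}^{(10)} = G_{11}^{(0)} \eta G_{12}^{(0)},\qquad
G_{12}^{(01)} = -G_{12}^{(0)} \eta_1 G_{11}^{(0)},
\end{equation*}
and from \eqref{B-expansion},
\begin{equation*}
B^{(1)} \;=\; \rho\big(G_{11}^{(0)} \eta G_{11}^{(0)} - D\eta D\big) \;-\; \rho\, G_{12}^{(0)} \eta_1 G_{12}^{(0)} \;+\; \rho_1\big(D\eta D - G^{(0)} \eta G^{(0)}\big).
\end{equation*}
After inserting these back, one finds that $\eta$-contributions from $G^{(1)}B_0^{-1}G_{12}^{(0)}$ combine with the $\eta$-pieces of $-G^{(0)}B_0^{-1}B^{(1)}B_0^{-1}G_{12}^{(0)}$ and with $G^{(0)}B_0^{-1}G_{12}^{(10)}$, while the $\eta_1$-contributions come from the remaining pieces.

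Then, inserting into $\mathrm{II} = \int \xi\, GB^{-1}G_{12}\, \xi_1\,dx$, every term is of the sandwich form $\int \xi\, \mathcal{O}_{\text{left}}\, \eta\, \mathcal{O}_{\text{right}}\, \xi_1\, dx$ (and similarly with $\eta_1$). Using the self-adjointness of the Fourier multipliers $D$, $G^{(0)}$, $G_{11}^{(0)}$, $G_{12}^{(0)}$, and of $B_0$ (from \eqref{A-B-C-coeff} and \eqref{B-definition}), I would move operators off the factor of $\eta$ (or $\eta_1$) via integration by parts so that each integral reads
\begin{equation*}
\int \eta\,\big(\mathcal{O}_{\text{left}}^* \xi\big)\big(\mathcal{O}_{\text{right}} \xi_1\big)\,dx,
\end{equation*}
which is precisely the form in the proposition. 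Grouping the four resulting pieces according to the coefficients $-\rho$, $-(\rho-\rho_1)$, $-\rho$, and $-1$ should yield exactly \eqref{I-3-definition-appendix}'s analogue, where the last term naturally assembles $\rho_1 G_{11}^{(0)} + \rho G^{(0)}$ because both $\rho$-pieces (from $B^{(1)}$ involving $\eta_1$) and $\rho_1$-pieces (from $G^{(0)}\eta_1 G^{(0)}$ via $B^{(1)}$ and $G_{12}^{(01)}$) share the common right factor $G^{(0)}B_0^{-1}(\cdot)\xi_1$.

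The routine ingredients are the substitution and integration by parts. The main obstacle will be the bookkeeping: keeping track of signs, of which factor carries $\eta$ versus $\eta_1$, and in particular recognizing the cancellations/combinations that produce the compact coefficients $-(\rho-\rho_1)$ and $\rho_1 G_{11}^{(0)} + \rho G^{(0)}$. Here the identities $G_{11}^{(0)}\hspace{-1pt}{}^{2} - G_{12}^{(0)}\hspace{-1pt}{}^{2} = (G^{(0)})^2$ and $B_0 = \rho G_{11}^{(0)} + \rho_1 G^{(0)}$ (which lets one rewrite mixed $G_{11}^{(0)}B_0^{-1}G^{(0)}$ terms) are the key algebraic tools that make the grouping collapse to the stated form.
\end{proof1}
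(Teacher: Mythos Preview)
Your proposal is correct and follows essentially the same approach as the paper: expand $GB^{-1}G_{12}$ to first order, substitute the compact forms $G_{12}^{(10)}=G_{11}^{(0)}\eta G_{12}^{(0)}$, $G_{12}^{(01)}=-G_{12}^{(0)}\eta_1 G_{11}^{(0)}$, and then regroup using $B_0=\rho G_{11}^{(0)}+\rho_1 G^{(0)}$ together with $(G_{11}^{(0)})^2-(G_{12}^{(0)})^2=(G^{(0)})^2$. One small imprecision: in your final paragraph the $\eta_1$-contributions do not involve any $G^{(0)}\eta_1 G^{(0)}$ term (since $G^{(1)}$ depends only on $\eta$); the two $\eta_1$-pieces are $-G^{(0)}B_0^{-1}G_{12}^{(0)}\eta_1 G_{11}^{(0)}$ from $G_{12}^{(01)}$ and $\rho\, G^{(0)}B_0^{-1}G_{12}^{(0)}\eta_1 G_{12}^{(0)}B_0^{-1}G_{12}^{(0)}$ from $B^{(1)}$, and it is the quadratic identity on $G_{ij}^{(0)}$ that produces the factor $\rho_1 G_{11}^{(0)}+\rho G^{(0)}$.
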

\begin{proof}
We expand $G(\eta)B^{-1}G_{12}$ in the definition $\mathrm{II}$ in \eqref{kinetic-energy-terms} using \eqref{DNO-expansion} and \eqref{B-expansion}, and obtain
\begin{equation}
\label{term-II-prop-inside-term-expansion}
\begin{aligned}
G(\eta)B^{-1}G_{12}
& = G^{(0)}G_{12}^{(0)}B_{0}^{-1} + (D\eta D - G^{(0)}\eta G^{(0)})B_{0}^{-1}G_{12}^{(0)} + G^{(0)}B_{0}^{-1}G_{11}^{(0)}\eta G_{12}^{(0)} \\
&\quad - G^{(0)}B_{0}^{-1}G_{12}^{(0)}\eta_{1}G_{11}^{(0)} -\rho G^{(0)}B_{0}^{-1}(G_{11}^{(0)}\eta G_{11}^{(0)} - D\eta D)B_{0}^{-1} G_{12}^{(0)} \\
&\quad + \rho G^{(0)}B_{0}^{-1}G_{12}^{(0)}\eta_{1}G_{12}^{(0)}B_{0}^{-1}G_{12}^{(0)} - \rho_{1}G^{(0)}B_{0}^{-1} D\eta D B_{0}^{-1} G_{12}^{(0)} \\
&\quad + \rho_{1} G^{(0)}B_{0}^{-1} G^{(0)}\eta G^{(0)}B_{0}^{-1}G_{12}^{(0)} + h.o.t.
\end{aligned}
\end{equation}
Next we group the first part of the fifth term and the third term in \eqref{term-II-prop-inside-term-expansion} as
\begin{align}
G^{(0)}B_{0}^{-1} G_{11}^{(0)}\eta G_{12}^{(0)} - \rho G^{(0)} B_{0}^{-1} G_{11}^{(0)} \eta G_{11}^{(0)}B_{0}^{-1} G_{12}^{(0)} &= \rho_{1}G^{(0)} B_{0}^{-1} G_{11}^{(0)} \eta G_{12}^{(0)} B_{0}^{-1} G^{(0)},
\end{align}
where we used the definition of $B_0$ in \eqref{B-expansion}. In a like manner, we combine the second part of the second term with the eighth (the last) term to get $-\rho G_{11}^{(0)}B_{0}^{-1} G^{(0)} \eta G^{(0)} B_{0}^{-1} G_{12}^{(0)}$.
We also add the first part of the second term to the seventh term to get $\rho D B_{0}^{-1} G_{11}^{(0)} \eta D B_{0}^{-1} G_{12}^{(0)}$.
These computations result in the $\eta$-related terms in \eqref{term-II-prop-inside-term-expansion}. 

The treatment of $\eta_1$-related terms in \eqref{term-II-prop-inside-term-expansion} is similar, and we use the relation 
\begin{equation}
\label{DNO-quadratic-relation}
\left( G_{11}^{(0)} \right)^{2}-\left( G_{12}^{(0)} \right)^{2} = \left( G^{(0)} \right)^{2}.
\end{equation} 
\end{proof}

\begin{proposition}\label{proposition-term-III}
The cubic part of $\mathrm{III}$ is given by
\begin{equation}
\begin{aligned}
\mathrm{III}^{(3)} =  \frac{1}{2}\int_{\mathbb{R}} \Big[ &- (\rho-\rho_1) \eta 
(G^{(0)}B_0^{-1}G_{12}^{(0)} \xi_1)^2
+ \frac{\rho}{\rho_1}(\rho-\rho_1)\eta
(DB_0^{-1}G_{12}^{(0)} \xi_1)^2
\\
&\quad -\frac{1}{\rho_1} \eta_1 \left( G^{(0)} B_0^{-1} (\rho_1 G_{11}^{(0)} + \rho G^{(0)}) \xi_1 \right) 
-\frac{1}{\rho_1}\eta_1 (D\xi_1)^2 \Big]\, dx.
\end{aligned}
\end{equation}
\end{proposition}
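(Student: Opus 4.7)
The plan is to mirror the proofs of Propositions \ref{proposition-term-I} and \ref{proposition-term-II}. Since $\xi_{1}$ already appears quadratically inside $\mathrm{III}$, extracting $\mathrm{III}^{(3)}$ reduces to isolating the part of the operator $\rho_{1}^{-1} G_{22} - \rho \rho_{1}^{-1} G_{21} B^{-1} G_{12}$ that is homogeneous of degree one in $(\eta,\eta_{1})$. I would do this in two steps, one for each summand.

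For $\tfrac{1}{2\rho_1}\xi_{1} G_{22} \xi_{1}$, I would read the linear terms of $G_{22}$ directly off \eqref{DNO-10-definition}--\eqref{DNO-01-definition}. Using $G_{11}^{(0)} = G_{22}^{(0)} = D \coth(h_{1}D)$ and $G_{12}^{(0)} = G_{21}^{(0)} = -D\,\mathrm{csch}(h_{1}D)$, this gives
\[
G_{22}^{(10)} + G_{22}^{(01)} = G_{12}^{(0)} \eta G_{12}^{(0)} - G_{11}^{(0)} \eta_{1} G_{11}^{(0)} + D \eta_{1} D,
\]
so the first two terms will eventually produce integrands in $\eta (G_{12}^{(0)}\xi_{1})^{2}$ and $\eta_{1} (G_{11}^{(0)}\xi_{1})^{2}$, while the last one is exactly the local $\eta_{1}(D\xi_{1})^{2}$ remainder appearing in the statement.

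For the second summand I would expand, using \eqref{DNO-expansion} and \eqref{B-expansion},
\[
\begin{aligned}
G_{21} B^{-1} G_{12}
&= G_{21}^{(0)} B_{0}^{-1} G_{12}^{(0)}
+ (G_{21}^{(10)}+G_{21}^{(01)}) B_{0}^{-1} G_{12}^{(0)}
+ G_{21}^{(0)} B_{0}^{-1} (G_{12}^{(10)}+G_{12}^{(01)})\\
&\quad - G_{21}^{(0)} B_{0}^{-1}\bigl(\rho G_{11}^{(10)} + \rho G_{11}^{(01)} + \rho_{1} G^{(1)}\bigr) B_{0}^{-1} G_{12}^{(0)} + h.o.t.,
\end{aligned}
\]
substitute the explicit formulas, then combine with the $G_{22}$ contribution inside $\mathrm{III}$ and integrate by parts where needed.

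The main obstacle will be the algebraic bookkeeping required to collapse the resulting $\eta$-contributions into just two squares, involving $G^{(0)}B_{0}^{-1}G_{12}^{(0)}\xi_{1}$ and $DB_{0}^{-1}G_{12}^{(0)}\xi_{1}$, and the $\eta_{1}$-contributions into the single square involving $G^{(0)}B_{0}^{-1}(\rho_{1}G_{11}^{(0)} + \rho G^{(0)})\xi_{1}$. As in the proof of Proposition \ref{proposition-term-II}, this relies on repeated use of $B_{0} = \rho G_{11}^{(0)} + \rho_{1} G^{(0)}$, which allows us to rewrite $G_{11}^{(0)}$ as $\rho^{-1}(B_{0} - \rho_{1} G^{(0)})$ and thereby merge pairs of terms sharing a common $B_{0}^{-1}$, together with the quadratic identity $(G_{11}^{(0)})^{2} - (G_{12}^{(0)})^{2} = (G^{(0)})^{2}$ to eliminate cross products of $G_{11}^{(0)}$ and $G_{12}^{(0)}$ in favour of $G^{(0)}$. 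Once these cancellations are performed, the four claimed integrands drop out, completing the proof.
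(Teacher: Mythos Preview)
Your proposal is correct and follows exactly the approach the paper indicates: the paper's own proof is the single sentence ``The proof follows from steps similar to the proofs of Proposition \ref{proposition-term-I}--\ref{proposition-term-II},'' and your outline spells out precisely those steps, including the key algebraic tools $B_{0} = \rho G_{11}^{(0)} + \rho_{1} G^{(0)}$ and the identity \eqref{DNO-quadratic-relation}. If anything, you have supplied more detail than the paper does.
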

\begin{proof}
The proof follows from steps similar to the proofs of Proposition \ref{proposition-term-I}--\ref{proposition-term-II}.
\end{proof}


The expression \eqref{cubic-Hamiltonian-1} now follows from Propositions \ref{proposition-term-I}-\ref{proposition-term-III}. 

\section{Formulas for coefficients $\kappa_j$}
\label{appendix-coefficients}
We provide the explicit expressions for the  $\kappa$ and $\kappa_j$ coefficients, which appear in the Hamiltonian \eqref{H3-in-tilde-variables} and elsewhere in the paper. For the full derivation of the formulas, we refer the reader to Section 6 of the PhD thesis of the second author \cite{CPAK23}.

The final expression for the coefficient $\kappa$ is
\begin{align} \label{kappa}
\kappa &=  \frac{\rho_{1}}{2\sqrt{g(\rho-\rho_{1})}}(b^{+})^{(0)} (\mathcal{A}_{4}^{(0)})^{2} 
+ \frac{1}{2\rho_{1}\sqrt{g\rho_{1}}}(a^{+})^{(0)} (\mathcal{A}_{5}^{(0)})^{2},
\end{align}
where the constants are obtained from the expansions
\begin{equation}
\label{a-plus-expansion}
\begin{aligned}
a^{+}(\varepsilon D_X)
&= \sqrt{1-\frac{\rho_1}{\rho}} - \varepsilon \frac{\rho_1}{\rho} \sqrt{1-\frac{\rho_1}{\rho}} h_{1} |D_{X}|+ \mathcal{O}(\varepsilon^{2}) \\
&=: (a^{+})^{(0)} + \varepsilon (a^{+})^{(1)}|D_{X}| + \mathcal{O}(\varepsilon^{2}),
\end{aligned}
\end{equation}
$\mathcal{A}_4^{(0)}$ is in \eqref{A4-expansion} and $\mathcal{A}_5^{(0)}$ is found from
\begin{equation}
\label{A5-expansion}
\begin{aligned}
&\mathcal{A}_5(\varepsilon D_X) = - \varepsilon \sqrt{\frac{g\rho_1 (\rho-\rho_1)}{\rho}} \left(1-\varepsilon \frac{\rho_1}{\rho} h_1 |D_X|\right) D_X := \varepsilon \left( \mathcal{A}_5^{(0)} + \varepsilon \mathcal{A}_5^{(1)} |D_X| \right) D_X.
\end{aligned}
\end{equation}
The expression for $\kappa_1$ is given by
\begin{equation}
\begin{aligned} 
\label{kappa 1}
\kappa_{1} &= -\frac{1}{2} \sqrt{\frac{\rho-\rho_{1}}{g}} (b^{+})^{(0)} (\mathcal{B}_{1}^{2}\omega_{1}^{-1})(k_{0}) + \frac{1}{2} \sqrt{\frac{\rho_{1}}{g}} (a^{+})^{(0)} (\mathcal{B}_{2}^{2}\omega_{1}^{-1})(k_{0}) \nonumber \\
&\quad + \frac{\rho}{2\sqrt{g(\rho-\rho_{1})}} (b^{+})^{(0)} (\mathcal{B}_{3}^{2}\omega_{1}^{-1})(k_{0}) - \frac{\rho_{1}}{2\sqrt{g(\rho-\rho_{1})}}(b^{+})^{(0)}(\mathcal{B}_{4}^{2}\omega_{1}^{-1})(k_{0}) \nonumber \\
&\quad - \frac{1}{2\rho_{1}\sqrt{g\rho_{1}}}(a^{+})^{(0)}(\mathcal{B}_{5}^{2}\omega_{1}^{-1})(k_{0}),
\end{aligned}
\end{equation}
where $\mathcal{B}_j$ is defined in \eqref{mathcal-A-B-expressions}, $\omega_1$ is in \eqref{dispersion-relations}, $(a^+)^{(0)}$ is in \eqref{a-plus-expansion} and $(b^+)^{(0)}$ is in \eqref{b-plus-expansion}.
For $\kappa_2$ we use \eqref{a-b-pm-definition}, \eqref{mathcal-A-B-expressions}, \eqref{A4-expansion} and \eqref{A5-expansion} to write
\begin{align} 
\label{kappa 2}
\kappa_{2} &= -\frac{\rho_{1}}{\sqrt{g(\rho-\rho_{1})}} A_{4}^{(0)}(b^{-}\mathcal{B}_{4})(k_{0}) - \frac{1}{\rho_{1}\sqrt{g\rho_{1}}} A_{5}^{(0)}(a^{-}\mathcal{B}_{5})(k_{0}).
\end{align}
For $\kappa_3$ we have
\begin{align} 
\label{kappa 3}
\kappa_{3} &= \frac{\rho_{1}}{\sqrt{g(\rho-\rho_{1})}}(b^{+})^{(0)} \mathcal{A}_{4}^{(0)} \mathcal{A}_{4}^{(1)} + \frac{1}{\rho_{1}\sqrt{g\rho_{1}}} (a^{+})^{(0)}\mathcal{A}_{5}^{(0)}\mathcal{A}_{5}^{(1)}.
\end{align}
For $\kappa_4$ we have
\begin{equation}
\label{kappa 4}
\begin{aligned} 
\kappa_{4} &= - \frac{\sqrt{\rho-\rho_{1}}}{4\sqrt{g}}(b^{+})^{(0)}(\mathcal{B}_{1}^{2}\omega_{1}^{-1})'(k_{0}) + \frac{\sqrt{\rho_{1}}}{4\sqrt{g}}(a^{+})^{(0)}(\mathcal{B}_{2}^{2}\omega_{1}^{-1})'(k_{0}) \\
&\quad + \frac{\rho}{\sqrt{4g(\rho-\rho_{1})}} (b^{+})^{(0)}(\mathcal{B}_{3}^{2}\omega_{1}^{-1})'(k_{0}) - \frac{\rho_{1}}{4\sqrt{g(\rho-\rho_{1})}} (b^{+})^{(0)} (\mathcal{B}_{4}^{2}\omega^{-1})'(k_{0}) \\
&\quad  -\frac{1}{4\rho_{1}\sqrt{g\rho_{1}}} (a^{+})^{(0)}(\mathcal{B}_{5}^{2}\omega_{1}^{-1})'(k_{0}),
\end{aligned}
\end{equation}
while 
\begin{align} 
\label{kappa 5}
\kappa_{5} &= -\frac{\rho_{1}}{2\sqrt{g(\rho-\rho_{1})}}\mathcal{A}_{4}^{(0)}(b^{-}\mathcal{B}_{4})'(k_{0}) - \frac{1}{2\rho_{1}\sqrt{g\rho_{1}}} \mathcal{A}_{5}^{(0)}(a^{-}\mathcal{B}_{5})'(k_{0}),
\end{align}
Next, we write $\kappa_6$ as
\begin{equation}
\label{kappa 6}
\begin{aligned} 
\kappa_{6} &= \frac{-\sqrt{\rho-\rho_{1}}}{2\sqrt{g}}(b^{+})^{(1)}(\mathcal{B}_{1}^{2}\omega_{1}^{-1})(k_{0}) + \frac{\sqrt{\rho_{1}}}{2\sqrt{g}}(a^{+})^{(1)} (\mathcal{B}_{2}^{2}\omega_{1}^{-1})(k_{0}) \\
&\quad + \frac{\rho}{2\sqrt{g(\rho-\rho_{1})}} (b^{+})^{(1)}(\mathcal{B}_{3}^{2}\omega_{1}^{-1})(k_{0}) - \frac{\rho_{1}}{2\sqrt{g(\rho-\rho_{1})}} (b^{+})^{(1)}(\mathcal{B}_{4}^{2}\omega_{1}^{-1})(k_{0}) \\
&\quad - \frac{1}{2\rho_{1}\sqrt{g\rho_{1}}} (a^{+})^{(1)}(\mathcal{B}_{5}^{2}\omega_{1}^{-1})(k_{0}), 
\end{aligned}
\end{equation}
For $\kappa_7$ we use the expansion of $\mathcal{A}_3$ in \eqref{mathcal-A-B-expressions} around $0$ given by 
\begin{equation*}
\mathcal{A}_3(\varepsilon D_X) = \varepsilon^2 \frac{h_1}{\rho} \sqrt{\frac{g\rho_1 (\rho-\rho_1)}{\rho}} |D_X| D_X =: \varepsilon^2 \mathcal{A}_3^{(0)} |D_X| D_X
\end{equation*}
and get
\begin{align} \label{kappa 7}
\kappa_{7} &= \frac{\rho}{\sqrt{g(\rho-\rho_{1})}}\mathcal{A}_{3}^{(0)}(b^{-}\mathcal{B}_{3})(k_{0}) - \frac{\rho_{1}}{\sqrt{g(\rho-\rho_{1})}} \mathcal{A}_{4}^{(1)}(b^{-}\mathcal{B}_{4})(k_{0}) - \frac{1}{\rho_{1}\sqrt{g\rho_{1}}} \mathcal{A}_{5}^{(1)} (a^{-}\mathcal{B}_{5})(k_{0}).
\end{align}
Finally, we write 
\begin{align} \label{kappa 8}
\kappa_{8} &= \frac{\rho_{1}}{2\sqrt{g(\rho-\rho_{1})}}(b^{+})^{(1)}(\mathcal{A}_{4}^{(0)})^{2} + \frac{1}{2\rho_{1}\sqrt{g\rho_{1}}}(a^{+})^{(1)} (\mathcal{A}_{5}^{(0)})^{2}=0,
\end{align}
where the last equality is obtained by direct substitution of constants from \eqref{b-plus-expansion}, \eqref{A4-expansion}, \eqref{a-plus-expansion} and \eqref{A5-expansion} into \eqref{kappa 8}.


\begin{lemma}
\label{lemma-kappa-asymptotics}
Let $\gamma : = 1-\frac{\rho_1}{\rho}$ be sufficiently small. Then, the coefficients $\widetilde \kappa$ and $\widetilde \kappa_j$ in \eqref{kappa-tilde-defn} have the following asymptotics:  

\begin{equation}
\label{kappa-tilde-asymptotics}
\begin{aligned}
&\widetilde \kappa = \frac{g^{1/4}\gamma^{1/4}}{4 h_1^{1/4} \sqrt{2\rho_1}}, \\
& \widetilde \kappa_1 = - \frac{g^{1/4}}{4 h_1^{5/4} \gamma^{3/4} \sqrt{2 \rho_1}} + \mathcal{O}(e^{-1/(8\gamma)}),\quad \quad
\widetilde \kappa_{2} = - \frac{g^{1/4} \gamma^{1/4} h_1^{3/4} (1-\gamma)}{2 \sqrt{2 \rho_1}} + \mathcal{O}(e^{-1/(8\gamma)}),\\
& \widetilde \kappa_{3} = - \frac{g^{1/4}\gamma^{1/4}}{2 h_1^{1/4} \sqrt{2\rho_1}} + \mathcal{O}(e^{-1/(8\gamma)}), \quad  \quad 
\widetilde \kappa_{4} = \frac{(1-\gamma) g^{1/4}}{4 h_1^{1/4} \gamma^{3/4} \sqrt{2 \rho_1}} + \mathcal{O}(e^{-1/(8\gamma)}).
\end{aligned}
\end{equation}
\end{lemma}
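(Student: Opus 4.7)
The plan is to substitute the explicit expressions for $\kappa, \kappa_1, \ldots, \kappa_8$ from Appendix~\ref{appendix-coefficients} into the definitions \eqref{kappa-tilde-defn} of $\widetilde{\kappa}$ and $\widetilde{\kappa}_j$, expand each Fourier multiplier in the small parameter $\gamma$, and retain the algebraic (non-exponentially-small) leading term.

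The key observation is that the resonant wavenumber is $k_0 = 1/(4h_1\gamma)$, so $h_1 k_0 = 1/(4\gamma) \to \infty$ as $\gamma \to 0$ and
\[
\coth(h_1 k_0) = 1 + \mathcal{O}(e^{-1/(2\gamma)}),\qquad \mathrm{csch}(h_1 k_0) = \mathcal{O}(e^{-1/(4\gamma)}).
\]
Since the off-diagonal operator $G_{12}^{(0)}(D)=-D\,\mathrm{csch}(h_1 D)$ enters only through the symbol $Q_b$ defined in \eqref{A-B-C-coeff}, this immediately yields $Q_b(k_0) = \mathcal{O}(e^{-1/(4\gamma)})$, while $Q_a(k_0)$ and $Q_c(k_0)$ admit finite algebraic-in-$\gamma$ limits. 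Consequently $\theta(k_0) = (Q_c-Q_a)/Q_b$ is exponentially large in $1/\gamma$, and expanding \eqref{a-b-pm-definition} for large $|\theta|$ gives
\[
a^+(k_0),\, b^-(k_0) = \mathcal{O}(\sqrt{\gamma}\, e^{-1/(4\gamma)}),\qquad a^-(k_0),\, b^+(k_0) = \mathcal{O}(1),
\]
with explicit algebraic leading coefficients. Analogous expansions provide the $k$-derivatives of these symbols at $k_0$ that enter the formulas for $\kappa_j$ through Lemma~\ref{lemma-fourier-action}.

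For $\widetilde{\kappa}$ the computation is direct: the relevant values $(a^+)^{(0)} = \sqrt{\gamma}$, $(b^+)^{(0)} = \sqrt{1-\gamma}$, $\mathcal{A}_4^{(0)}$ and $\mathcal{A}_5^{(0)}$ are evaluated at $D=0$ and are listed in \eqref{a-plus-expansion}--\eqref{A5-expansion}; substituting them into \eqref{kappa} and dividing by $2\sqrt{2 c_0}$ with $c_0=\sqrt{g\gamma h_1}$ produces the stated formula for $\widetilde{\kappa}$.

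For the remaining $\widetilde{\kappa}_j$, I will expand each summand in \eqref{kappa 1}--\eqref{kappa 8} using the asymptotics above, noting that any occurrence of $Q_b(k_0)$, $b^-(k_0)$, or $a^+(k_0)$ contributes a factor $\mathcal{O}(\sqrt{\gamma}\,e^{-1/(4\gamma)})$. The hard part will be the bookkeeping of algebraic cancellations inside some of the $\kappa_j$: in $\kappa_1$, for instance, the two non-exponentially-small pieces (arising from the $\mathcal{B}_2$ and $\mathcal{B}_5$ terms, each weighted by $(a^+)^{(0)}=\sqrt{\gamma}$) turn out to cancel to leading algebraic order, so that the displayed leading term of $\widetilde{\kappa}_1$ in \eqref{kappa-tilde-asymptotics} arises entirely through $\kappa_2/\sqrt{2c_0}$ in \eqref{kappa-tilde-defn}. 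Repeating this case analysis for $\widetilde{\kappa}_2, \widetilde{\kappa}_3, \widetilde{\kappa}_4$, combining the surviving algebraic contributions with the $c_0^{\pm 1/2}$ factors from \eqref{kappa-tilde-defn}, and absorbing all exponentially small residues into $\mathcal{O}(e^{-1/(8\gamma)})$ (a bound looser than $\mathcal{O}(e^{-1/(4\gamma)})$ or $\mathcal{O}(e^{-1/(2\gamma)})$), yields the asymptotics \eqref{kappa-tilde-asymptotics}.
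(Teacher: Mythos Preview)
Your proposal is correct and follows essentially the same route as the paper's proof: both arguments rest on the observation that $k_0=1/(4h_1\gamma)\gg 1$ forces $G_{12}^{(0)}(k_0)=-k_0\,\mathrm{csch}(h_1k_0)$, and hence $Q_b(k_0)$, to be exponentially small in $1/\gamma$, after which the formulas \eqref{kappa}--\eqref{kappa 8} are expanded termwise and combined through \eqref{kappa-tilde-defn}. Your write-up is in fact more detailed than the paper's terse sketch, since you make explicit the large-$\theta$ expansion of $a^\pm,b^\pm$ and identify which $\kappa_j$ are exponentially small (so that, e.g., the leading part of $\widetilde\kappa_1$ comes solely from $\kappa_2$); the paper leaves these intermediate reductions to the reader.
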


\begin{proof}
Note that, from \eqref{c0} and \eqref{k0}, under the regime $\gamma \ll 1$, we have
$$
c_0 = \sqrt{g h_1 \gamma} \ll 1 \quad \text{and} \quad k_0 = \frac{1}{4h_1 \gamma} \gg 1. 
$$
The expressions \eqref{kappa-tilde-asymptotics} are derived from the above expressions $\kappa$ and $\kappa_j$'s using the expansions around $k_0$ of $a^\pm$, $b^\pm$ in \eqref{a-b-pm-definition} and $B_j$ in \eqref{mathcal-A-B-expressions}. These expressions are based on $Q_a, Q_b, Q_c$ from \eqref{A-B-C-coeff}, which in turn, depend on $G^{(0)}, G_{11}^{(0)}, G_{12}^{(0)}$. The latter satisfies the following expansions for $k_0 \gg 1$:
\begin{equation*}
\begin{aligned}
&G_{11}^{(0)}(k_0) = k_0 \coth (h_1k_0) 
= k_0 + \mathcal{O} \left(\frac{1}{\gamma} e^{-1/(2\gamma)}\right),\\
& G_{12}^{(0)}(k_0) = - k_0~ {\rm csch} (h_1 k_0) 
= - \mathcal{O} \left(\frac{1}{\gamma} e^{-1/(4\gamma)}\right).
\end{aligned}
\end{equation*}
\end{proof}






\end{document}